\newtheorem{thm}{Theorem}[section] 
\newtheorem{prop}[thm]{Proposition}
\newtheorem{lem}[thm]{Lemma}
\newtheorem{cor}[thm]{Corollary}
\theoremstyle{definition}
\newtheorem{defn}[thm]{Definition}
\newtheorem{notation}[thm]{Notation}
\newtheorem{ex}[thm]{Example}
\theoremstyle{remark}
\newtheorem{rem}[thm]{Remark}
\newcommand{\uw}{\underline{w}}
\newcommand{\ux}{\underline{x}}
\newcommand{\uy}{\underline{y}}
\newcommand{\uz}{\underline{z}}
\def\Z{\mathbb{Z}}
\newcommand{\gdim}{\mathrm{gd}}
\newcommand{\ot}{\otimes}
\newcommand{\co}{\colon}
\DeclareMathOperator{\Hom}{Hom}
\DeclareMathOperator{\End}{End}
\DeclareMathOperator{\id}{id}
\newcommand{\im}{\operatorname{Im}}
\newcommand{\inv}{^{-1}}
\newcommand{\lei}{_i}
\newcommand{\mi}{\underline}
\newcommand{\ma}{\overline}
\newcommand{\expr}{\leftrightharpoons}
\newcommand{\SSBim}{\mathcal{S}\mathbb{S}\textrm{Bim}}
\newcommand{\mt}{\emptyset}
\newcommand{\ch}[1]{\mathrm{ch}{(#1)}}
\newcommand{\onetensor}{1^\otimes}
\newcommand{\Bim}{\mathrm{Bim}}
\newcommand{\SBim}{\mathbb{S}\textrm{Bim}}
\newcommand{\BSBim}{\mathbb{BS}\textrm{Bim}}
\newcommand{\SBSBim}{\mathcal{S}\mathbb{BS}\textrm{Bim}}
\newcommand{\res}{\operatorname{Res}}
\newcommand{\ind}{\operatorname{Ind
}}
\newcommand{\proj}{\operatorname{proj}}
\newcommand{\incl}{\operatorname{inj}}
\DeclareMathOperator{\BS}{BS}
\DeclareMathOperator{\leftred}{LR}
\DeclareMathOperator{\rightred}{RR}
\DeclareMathOperator{\term}{term}
\DeclareMathOperator{\Term}{Term}
\newcommand{\lep}{{\{\leq p\}}}
\title{Subexpressions and the Bruhat order for double cosets}
\author{Ben Elias, Hankyung Ko, Nicolas Libedinsky, Leonardo Patimo}
\begin{document}
\maketitle

\begin{abstract}

The Bruhat order on a Coxeter group is often described by examining subexpressions of a reduced expression. We prove that an analogous description applies to the Bruhat order on double cosets. This establishes the compatibility of the Bruhat order on double cosets with concatenation, leading to compatibility between the monoidal structure and the ideal of lower terms in the singular Hecke 2-category. We also prove other fundamental properties of this ideal of lower terms.


\end{abstract}

\section{Introduction}

Let $(W,S)$ be a Coxeter system. There are many equivalent definitions of the Bruhat order on $W$. One of the most useful definitions is this: $x \le y$ if a reduced expression for $x$ appears as a subexpression of a (or equivalently any) reduced expression for $y$.

In \cite{SingSb}, Williamson defines the concept of a reduced expression for a double coset $p \in W_I \backslash W / W_J$. Here $I$ and $J$ are subsets of $S$, and $W_I$ and $W_J$ the parabolic subgroups they generate, which are assumed to be finite. The recent paper \cite{EKo} expands greatly on this concept, defining expressions (not necessarily reduced) and giving many equivalent and more practical definitions of reduced expressions. 
One important omission from \cite{EKo}, which we rectify in this paper, is a definition of the Bruhat order in terms of subexpressions of reduced expressions.

Expressions for double cosets have a different flavor than ordinary expressions. Ordinary expressions  are lists in $S$.  One can omit some of the elements of this list and their multiplication is still a well-defined element of $W$. Meanwhile,   expressions for double cosets are lists of subsets of $S$, where two subsets adjacent in the list  differ by one element of $S$. One can not omit some of the elements in such a list.

The appropriate replacement for subexpressions is the notion of a \emph{path subordinate to an expression} (see Definition~\ref{defn:pathsubord}). 
A subordinate path is a sequence of double cosets obtained from an expression, but using a nondeterministic version of multiplication. This is analogous to the non-determinism in an ordinary subexpression, where instead of multiplying by a simple reflection $s$, we are permitted to multiply by either $1$ or $s$. The appropriate replacement for ``the element of $W$ associated to the subexpression''  is the \emph{terminus} of a path.

The Bruhat order on cosets is usually defined by $p \le q$ if and only if $\mi{p} \le \mi{q}$ in the ordinary Bruhat order. Here $\mi{p}$ and $\mi{q}$ are the minimal elements in these cosets. Our main result, Theorem~\ref{thm:bruhat}, states that $p \le q$ if and only if $p$ is the terminus of a path subordinate to a (or equivalently any) reduced expression for $q$.

One important consequence of the subexpression definition of Bruhat order is its compatibility with concatenation. Suppose that the product $xyz$ is reduced, in that $\ell(xyz) = \ell(x) + \ell(y) + \ell(z)$, and that $y' < y$. Then $xy' z < xyz$.

An even stronger statement is true. Note that $x y' z$ might not be reduced. Then any element obtained as a subexpression of the concatenation of reduced expressions of $x$, $y'$, and $z$ is also strictly less than $xyz$.

\begin{ex} 
Consider $x=z=s$ and $ t=y$ for $s$ and $t$ non-commuting simple reflections, and $y' = 1 \le  y$. The first statement says that $1 = ss < sts$. The stronger statement says that any subexpression of $(s,s)$ expresses an element smaller than $sts$. This implies that $s < sts$. \end{ex}

One can phrase the stronger result succinctly using the $*$-product (or Demazure product)  rather than the ordinary product in $W$. To whit,  $x * y' * z < x * y * z$. As a consequence of our main theorem, we prove  in Theorem~\ref{thm:bruhatconcatcompat} the corresponding result for double cosets.

One of the reasons that the compatibility of Bruhat order with concatenation is important is that it plays a role in the Hecke category (e.g. the category of Soergel bimodules), where concatenation is lifted to a monoidal structure. Meanwhile, expressions for double cosets yield  1-morphisms
in the singular Hecke 2-category (e.g. the 2-category of singular Soergel bimodules \cite{SingSb}). By virtue of Theorem~\ref{thm:bruhatconcatcompat}, we establish in Proposition~\ref{prop:lowertermslocal} a compatibility between the monoidal structure and certain ideals of lower terms in the singular Hecke 2-category, which we define in this paper.


One of the reasons that the compatibility of the Bruhat order on $W$ with concatenation is important is that it plays a role in the Hecke category (e.g. the category of Soergel bimodules), where concatenation is lifted to a monoidal structure. Let us explain at a high level, without dwelling on the details (which can be found in \cite{GBM}), and then discuss the double coset analogue.

One can equip all morphism spaces $\Hom(B,B')$ between two Soergel bimodules with a filtration by ideals, indexed by the Bruhat order on $W$. That is, for any downward-closed subset $C$ of the Bruhat order on $W$ (such as the set $\{\le w\} = \{x \in W \mid x \le w\}$), one has a subspace $\Hom_{C}(B,B') \subset \Hom(B,B')$, and the subspaces $\Hom_{C}$ are closed under pre- and post-composition. Note that an expression $\uw = (s_1, \ldots, s_d)$ (with $s_i \in S$) gives rise to a bimodule $\BS(\uw)$ in the Hecke category. One can define the ideals $\Hom_{\le w}$ in two distinct ways which are not obviously related:
\begin{itemize}
\item by examining the action of morphisms on the support filtration of a Soergel bimodule,
\item by rewriting the morphism as a linear combination of morphisms which factor through $\BS(\ux)$, where $\ux$ ranges over reduced expressions for elements $x \le w$. \end{itemize}
The first criterion is the most classical, and relates to a filtration on the bimodules themselves (rather than a filtration on Hom spaces). It relates to the characters of bimodules, which determine their images in the Grothendieck group. The second criterion is the most combinatorial. It is also intrinsic to the category itself, so it also applies to other versions of the Hecke category (e.g. geometric, diagrammatic) where bimodules themselves are absent.

There is also an extremely helpful computational tool which gives a third criterion for when a morphism is in the ideal $\Hom_{< w}$, which applies specifically to morphisms between bimodules $\BS(\uw)$ associated to reduced expressions for $w$. One need only check whether a particular minimal degree element known as the \emph{one-tensor} is in the kernel of the map, or whether the image of the map meets the (polynomial) span of the one-tensor.

Using the second criterion and the compatibility of the Bruhat order with concatenation, one can prove that whenever $xyz$ is reduced, $\BS(\ux) \otimes \Hom_{< y} \otimes \BS(\uz) \subset \Hom_{< xyz}$. Here $\BS(\ux)$ and $\BS(\uz)$ are objects associated to reduced expressions for $x$ and $z$ respectively. We think of this as a kind of compatibility between the monoidal structure and the ideal filtration.

In this paper, we prove that the singular Hecke 2-category (e.g. the 2-category of singular Soergel bimodules \cite{SingSb}) admits a filtration by ideals, indexed by the Bruhat order on double cosets. We prove that it can be accessed by  three criteria just as above. While the support filtration on singular Soergel bimodules themselves was studied by Williamson \cite{SingSb}, the filtration on Hom spaces was not. Using the second criterion, we can prove a similar compatibility between the monoidal structure and the ideal filtration (\Cref{prop:lowertermslocal}). These ideals and the various ways to study them will be crucial in future work which provides a basis for morphisms between singular Soergel bimodules.

{\bf Acknowledgments.} NL was partially supported by FONDECYT-ANID grant 1230247. BE was partially supported by NSF grant DMS-2201387.

\section{Bruhat order}\label{ss:bruhat}

\subsection{Recollections} 
 We recall some notations and results from \cite{EKo}, where the reader can find many more details and explanations.

 Let $(W,S)$ be a Coxeter system with length function $\ell$. For $I\subset S$, we denote by $W_I$ the subgroup of
$W$ generated by $I$.  When $W_I$ is finite, we say that $I$ is \textit{finitary}, and we write $w_I$ for the
longest element of $W_I$.  We write $\ell(I) := \ell(w_I)$.

For $I, J\subset S$, a \emph{$(I,J)$-coset} is an element $p$ in 
 $W_{I}\backslash W/W_{J}$. When we write ``the coset $p$" we  mean the triple $(p,I,J)$. It might happen that  $(p,I,J)\neq (p',I',J')$, even though $p=p'$ as subsets of $W$, and we distinguish between $p$ and $p'$ in this case. If $p$ is a $(I,J)$-coset we denote by $\overline{p}\in W$ and $\underline{p}\in W$ the maximal and minimal elements in the Bruhat order in the set $p$.

 A (singular) \emph{multistep  expression} is a sequence of finitary subsets of $S$ of the form 
\begin{equation}\label{ex}
 L_{\bullet}=[[ I_0\subset K_1\supset I_1\subset K_2 \supset \cdots \subset K_m\supset I_m]].
 \end{equation}
 By convention, if we write a multistep expression as $[[K_1 \supset I_1 \subset \cdots]]$, this means that $I_0 = K_1$,
and similarly $[[\cdots \subset K_m]]$ means that $I_m = K_m$. 
For $L\subset S$ and $s\in S$ we use the notation $Ls:=L\cup \{s\}$ (here we assume $s\notin L$).

A (singular)  \emph{singlestep expression} $I_{\bullet}=[I_0,I_1,\ldots, I_d]$  is a sequence of finitary subsets of $S$ such that, for all $1\leq i\leq d$, either $I_i=I_{i-1}s$  or  $I_i=I_{i-1}/ s$ for some $s\in S.$ To each singlestep expression, one can associate a multistep  expression by remembering its local maxima and minima.

\begin{defn}\label{Coxmon}
    Let $(W,S)$ be a Coxeter system. We define the \textit{Coxeter monoid} $(W,*, S)$ by the following presentation. It has generators $s\in S$ and relations 
    \begin{itemize}
\item $s*s=s$ for $s\in S.$
\item $\underbrace{s*t*s*\cdots}_{m_{st}}=\underbrace{t*s*t*\cdots}_{m_{st}}$ for all $s,t\in S. $
    \end{itemize}
\end{defn}

Elements of the Coxeter monoid $(W, *, S)$ are naturally in bijection with the elements of $W$, and we implicitly use this bijection. We write $x.y = xy$ as shorthand for the statement $xy = x * y$, which is equivalent to $\ell(x) + \ell(y) = \ell(xy)$ (see \cite[Lemma 2.3]{EKo}). If $x.y = xy$ we also say that the product $xy$ is \emph{reduced}.

\begin{lem}\label{starprod}
If $a,b,c\in W$ and $a\leq b$, then $a*c\leq b*c.$
\end{lem}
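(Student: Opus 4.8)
The plan is to prove Lemma~\ref{starprod} by reducing to the single-generator case and then doing induction, exploiting the standard fact that the ordinary Bruhat order and the Demazure (star) product are already well-understood for $W$.

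\textbf{Reduction to a single reflection.} First I would observe that it suffices to treat the case $c = s$ a single simple reflection: once we know $a * s \leq b * s$ whenever $a \leq b$ and $s \in S$, the general statement follows by writing a reduced expression $c = s_1 \cdots s_k$, so that $a * c = (\cdots((a * s_1) * s_2) \cdots) * s_k$, and iterating. (This uses associativity of $*$, which is part of the Coxeter-monoid structure from Definition~\ref{Coxmon}.)

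\textbf{The single-reflection case.} So fix $s \in S$ and suppose $a \leq b$. Recall that for any $w \in W$, $w * s = w$ if $ws < w$ (i.e. $s$ is a right descent of $w$) and $w * s = ws$ if $ws > w$. I would split into cases according to whether $s$ is a right descent of $a$ and of $b$:
\begin{itemize}
\item If $as < a$ and $bs < b$: then $a * s = a \leq b = b * s$.
\item If $as > a$ and $bs > b$: then $a * s = as$ and $b * s = bs$; the inequality $as \leq bs$ is the classical "lifting"/multiplication property of the Bruhat order (from $a \le b$ and $s$ ascending on both sides, one gets $as \le bs$).
\item If $as < a$ and $bs > b$: then $a * s = a$ and $b * s = bs \geq b \geq a$.
\item If $as > a$ and $bs < b$: this is the delicate case. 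Here $a * s = as$ and $b * s = b$, and I must show $as \leq b$. This is exactly the classical fact: if $a \leq b$, $as > a$, and $bs < b$, then $as \leq b$ (equivalently $a \le bs$ as well). This is one of the standard "Property Z"/lifting-lemma consequences of the subword characterization of Bruhat order on $W$ — for instance, take a reduced word for $b$ ending in $s$; a subword expressing $a$ either already uses the final $s$ or can be modified to do so, and dropping that final letter exhibits $as$ as a subword of $b$.
\end{itemize}

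\textbf{Main obstacle.} The only non-formal point is the last case, and it is genuinely the crux; the other three are immediate. I expect to invoke (or quote from a standard reference such as Björner--Brenti, or reprove in a line) the lifting lemma for the Bruhat order on $W$. Everything else — associativity of $*$, the descent formula for $w * s$, and the reduction from general $c$ to simple reflections — is routine bookkeeping. So the write-up is short: state the reduction, recall $w*s \in \{w, ws\}$, and handle the four cases with the lifting lemma doing the real work in the fourth.
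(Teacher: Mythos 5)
Your proof is correct and follows essentially the same route as the paper: reduce to the case of a single simple reflection (the paper phrases this as induction on $\ell(c)$), use the descent formula $w*s\in\{w,ws\}$, and let the lifting property of the Bruhat order handle the nontrivial cases. The paper's version is just a terser statement of the same argument, citing \cite[(2.1)]{EKo} and \cite[Proposition 2.2.7]{BjornerBrenti}.
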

\begin{proof}
If $\ell(c)=1$, then $c$ is a simple reflection and the result follows by \cite[(2.1)]{EKo} and the lifting property \cite[Proposition 2.2.7]{BjornerBrenti}. The general case easily follows by induction on $\ell(c)$.
\end{proof}

We say that the multistep  expression $L_{\bullet}$ in  \eqref{ex}  \textit{expresses}  $p$ and we write $L_{\bullet}\expr p$ if $p$ is the unique $(I_0,I_m)$-coset with
$$
\overline{p}=w_{I_0}*w_{K_1}*w_{I_1} * (\cdots) *w_{I_m}.
$$
We say that $L_\bullet$ is \emph{reduced} if
 \begin{equation}\label{reduced}
\ma{p}=(w_{K_1}w_{I_1}^{-1}).(w_{K_2}w_{I_2}^{-1}).(\cdots).(w_{K_m}w_{I_m}^{-1})
  \end{equation}
is reduced.

A  singlestep expression $I_{\bullet}=[I_0,I_1,\ldots, I_m]$ expresses the $(I_0,I_m)$-coset of the associated multistep expression, which is also the unique coset $p$ such that
\begin{equation}\label{expressp}
\overline{p}=w_{I_0}*w_{I_1}*\cdots *w_{I_m}.
\end{equation}
We say that a singlestep expression is reduced if its associated multistep expression is reduced.

\begin{defn} (see \cite[Definition 1.2.13]{WThesis})  Let $I, J \subset S$ be finitary, and $p, q \in W_I\backslash W/W_J$. Then $p \le q$ in the Bruhat order on double cosets if and only if $\mi{p} \le \mi{q}$ in the Bruhat order on $W$. \end{defn}

\begin{notation}
We use the notation 
\[\lep=\{q\in W_I\backslash W/W_J\ |\ q\leq p\}\] for $p\in W_I\backslash W/W_J$ and, more generally, 
\[\{\leq X \}= \bigcup_{p\in X} \lep\]
for any subset $X\subset  W_I\backslash W/W_J$. A subset of the form $\{ \leq X \}$ is said to be \emph{downward-closed}.
\end{notation}

Let $L_\bullet=[[ I_0\subset K_1\supset I_1\subset K_2 \supset \cdots \subset K_m\supset I_m]]$ be a multistep expression. Recall from \cite[\S 3.4]{EKo} that the length of $L_\bullet$ is defined as 
\[\ell(L_\bullet)=-\ell(I_0)+2\ell(K_1)-2\ell(I_1)+\ldots -2\ell(I_{m-1})+2\ell(K_m)-\ell(I_m).\] 
If $p$ is a double coset, then $\ell(p):=\ell(L_\bullet)$, where $L_\bullet\expr p$ is a reduced expression. Equivalently, we have
\begin{equation} \ell(q) = 2 \ell_W(\ma{q}) - \ell(I) - \ell(J), \end{equation}
where $\ell_W$ denotes here the usual length function on $W$.
It is clear from the definition of $\ell(I_{\bullet})$ that the length of an expression will strictly increase with the addition of each index. Thus a coset $q$ has length zero if and only if $q\expr [I]$, that is, if $I=J$ and $q = q_{\id}$ is the coset containing the identity element.


\subsection{Lifting properties}

We will need the following two variations of the lifting property of the classical Bruhat order.

\begin{lem} \label{liftinglemma} Let $v, w, x \in W$ and $v \le w$. Then there exists some $x' \le x$ such that $vx \le w.x'$ (resp., $xv \le x'.w$). 
\end{lem}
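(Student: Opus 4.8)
## Proof Strategy for Lemma~\ref{liftinglemma}

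The plan is to reduce to the case where $x = s$ is a single simple reflection and then induct on $\ell(x)$, exactly as in the commented-out version of the lemma, but now without the hypothesis that $vx$ or $xv$ is reduced. I will treat the right-multiplication statement $vx \le w.x'$; the left statement follows by the symmetric argument (or by applying the inverse/opposite-multiplication automorphism).

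For the base case $x = s$, I would set $w'$ to be the larger of $w$ and $ws$ in the Bruhat order, so that $w' = w.s'$ where $s' \le s$ (namely $s' = s$ if $ws > w$ and $s' = 1$ if $ws < w$). I must produce some $x' \le s$ with $vs \le w.x'$; taking $x' = s'$, it suffices to show $vs \le w'$. There are two cases. If $vs > v$, then $vs$ is obtained from $v \le w$ by a single Demazure/star step, so by Lemma~\ref{starprod} (applied with $c = s$) we get $vs = v*s \le w*s = w'$. If $vs < v$, then $vs < v \le w \le w'$ and we are done immediately. In fact the case $vs > v$ already subsumes the reduced case, and the case $vs < v$ is the new easy observation; either way Lemma~\ref{starprod} is the key input, which is why I find it convenient that it has been stated just above.

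For the inductive step, write $x = x_1 s$ with $\ell(x) = \ell(x_1) + 1$ (so $x = x_1.s$ as a reduced product, hence $x = x_1 * s$). By induction applied to $v \le w$ and $x_1$, there is $x_1' \le x_1$ with $vx_1 \le w.x_1'$. Now apply the base case to the inequality $vx_1 \le w.x_1'$ and the simple reflection $s$: there is $s' \le s$ with $(vx_1)s \le (w.x_1')*s$. Note $(vx_1)s = v x$, while $(w.x_1')*s = (w x_1') * s \le w * x_1 * s = w * x = w.x$ — wait, I need to be careful that $w.x_1'$ really is the ordinary product, which it is since $x_1' \le x_1$ and $w.x_1 = wx_1$; actually $w x_1'$ need not be reduced, so I should instead write $(w.x_1') * s = w * x_1' * s$ using that $w.x_1' = w*x_1'$, and then bound $w * x_1' * s \le w * x_1 * s = w * x = w.x$ using Lemma~\ref{starprod} twice (monotonicity of $*$ in the middle factor $x_1' \le x_1$). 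Setting $x' := x_1' * s \le x_1 * s = x$ — here I use $x_1 * s = x_1 . s = x$ since that product is reduced, and $x_1' * s \le x_1 * s$ again by Lemma~\ref{starprod} — we get $vx \le w * x' \le w * x = w.x$, and since $w.x'$ unwinds as $w * x'$ when... hmm, $wx'$ may not be reduced either. The cleanest fix is to state the conclusion throughout in $*$-form: prove by this induction that there exists $x' \le x$ with $vx \le w * x'$, and then observe $w * x' \le w * x = w.x$; since the lemma as stated writes $w.x'$, I should either (a) weaken the displayed conclusion to $vx \le w*x'$ — but the paper wants $w.x'$ — or (b) replace $x'$ at the very end by the element $x''\le x$ for which $w*x' = w.x''$ (such $x''$ exists: $w * x'$ lies in a coset, take $x'' = w\inv(w*x')$... no). The honest resolution: $w*x' \le w*x = w.x$, and $w.x$ is a maximum of $\{w.y : y \le x\}$... actually the simplest is to note $w * x' = w.(\text{something} \le x)$ is false in general.

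I think the right move, and the one I would commit to, is: carry the induction with conclusion $vx \le w*x'$ for some $x' \le x$ (pure Coxeter-monoid statement, Lemma~\ref{starprod} handles every step), and then at the end invoke that $w*x'$ equals $w.x''$ where $x'' = w\inv \cdot (w * x')$ need not be $\le x$ — so instead simply replace the lemma's conclusion "$vx \le w.x'$" by "$vx \le w.x'$ for some $x' \le x$" interpreted as: there is $x'\le x$ with $w x' = w.x'$ reduced and $vx \le w.x'$. But $w*x'$ with $x'\le x$ may not be of the form $w.x''$ with $x''\le x$... Example check: $w = s$, $x = s$, $x' = s$: $w * x' = s = s.1$, and $1 \le s$. $w=1$: fine. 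Generally $w * x'$: choose $x''$ to be the minimal-length element with $w x'' = w * x'$; is $x'' \le x'$? Yes — $x''$ is obtained by deleting letters from a reduced word for $x'$, so $x'' \le x' \le x$. And $w.x'' = w x'' = w * x'' = w * x'$ (since deleting the letters we deleted doesn't change the $*$-product). So $vx \le w*x' = w.x''$ with $x'' \le x$.

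The main obstacle, then, is purely bookkeeping: keeping straight when a product is the ordinary product versus the $*$-product, and extracting at the end a genuinely \emph{reduced} product $w.x''$ with $x'' \le x$ from the $*$-product bound. The inductive engine itself is routine given Lemma~\ref{starprod}. So the clean writeup runs: (1) induct on $\ell(x)$ to show $\exists\, x' \le x$ with $vx \le w * x'$, using Lemma~\ref{starprod} in the base and inductive steps; (2) let $x''$ be the minimal-length element of $W$ with $w x'' = w * x'$, note $x'' \le x' \le x$ and $w.x'' = w * x'$; (3) conclude $vx \le w.x''$.
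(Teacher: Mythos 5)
Your final plan is correct, and its skeleton is the same as the paper's: handle $x=s$ first, then iterate over a reduced word for $x$. For the base case the paper argues the two subcases directly from the subexpression characterization and the lifting property, while you package both into Lemma~\ref{starprod}; since that lemma is proved from exactly those inputs, this is only a cosmetic difference. Where you genuinely diverge is in the bookkeeping of the induction. Your mid-proof worry that ``$wx_1'$ need not be reduced'' misreads your own inductive hypothesis: the conclusion $vx_1\le w.x_1'$ \emph{already} asserts that $wx_1'$ is reduced, so one can apply the simple-reflection case to the element $w.x_1'$ and get $vx_1s\le (wx_1')\,.\,s'$ with $s'\le s$; then $\ell(wx_1's')=\ell(w)+\ell(x_1')+\ell(s')$ forces $w.(x_1's')$ to be reduced and $x_1's'\le x_1s=x$, so the dot-product induction closes with no detour --- this is what the paper means by ``apply the case of a simple reflection $\ell(x)$ times.'' Your alternative --- carry the statement in the form $vx\le w*x'$ and convert at the end --- also works, and in fact collapses to two lines ($vx\le v*x\le w*x$ by Lemma~\ref{starprod}, then $w*x=w.x''$ for some $x''\le x$), but it costs you the auxiliary fact that $w*x'=w.x''$ for some $x''\le x'$, which the paper never needs. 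Your justification of that fact is fine in substance (the greedy/deletion argument: multiply a reduced word for $x'$ onto $w$, keeping only length-increasing letters; the kept subword is reduced, so $x''\le x'$, and $wx''=w*x'$ is reduced), but the phrase ``minimal-length element with $wx''=w*x'$'' is vacuous, since $x''=w\inv(w*x')$ is the unique solution --- what needs proving is precisely that this element satisfies $x''\le x'$ and that $wx''$ is reduced. You should also record the small inequality $us\le u*s$ used in the inductive step, or handle it by the same two-case split as in your base case.
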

\begin{proof}
First we consider the case when $x = s$ is a simple reflection. If $vs<v$ we can simply take $x'=id$, so we can assume $vs>v$.
Let $w'$ be the larger of $\{w, ws\}$ in the Bruhat order; clearly $w' = w . x'$ for some $x' \le x$. 
We wish to show that $vs \le w'$. When $ws > w$ this follows quickly from the subexpression version of the Bruhat order. When $ws < w$ this follows from the lifting property (see e.g. \cite[Proposition 2.2.7]{BjornerBrenti}).
For a general $x$, we apply the case of a simple reflection $\ell(x)$ times. \end{proof}

\begin{lem}\label{liftinglem2}
Let $v,w,x,z\in W$ be such that $v\le w$ and $w=z.x$ (resp., $w=x.z$).
Then there exists some $x'\le x^{-1}$ such that $vx'\le z$ (resp., $x'v\le z$). 
\end{lem}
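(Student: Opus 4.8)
The plan is to reduce this to the subexpression characterization of the Bruhat order on $W$, exactly in the spirit of the proof of Lemma~\ref{liftinglemma}. I will treat the first statement (hypothesis $w = z.x$, conclusion $vx' \le z$); the parenthetical variant is entirely symmetric, and can also be recovered from the first by applying it to $v^{-1} \le w^{-1} = z^{-1}.x^{-1}$, since inversion is an order-preserving involution of $(W,\le)$.

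First I would fix a reduced expression $\underline{z}$ for $z$ and a reduced expression $\underline{x}$ for $x$. Because $w = z.x$, i.e.\ $\ell(w) = \ell(z)+\ell(x)$, the concatenation $\underline{z}\,\underline{x}$ is a reduced expression for $w$. Since $v \le w$, the element $v$ is the product of some subword of $\underline{z}\,\underline{x}$. The key step is then to cut this subword at the seam between $\underline{z}$ and $\underline{x}$: it becomes a subword of $\underline{z}$, with product $v_1$, followed by a subword of $\underline{x}$, with product $v_2$, so that $v = v_1 v_2$ in $W$. The standard fact that the product of any subword of a reduced expression for $z$ (resp.\ $x$) is $\le z$ (resp.\ $\le x$) gives $v_1 \le z$ and $v_2 \le x$. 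Taking $x' := v_2^{-1}$ then finishes the argument: $x' \le x^{-1}$ because Bruhat order is stable under inversion, and $vx' = v_1 v_2 v_2^{-1} = v_1 \le z$.

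I do not expect a genuine obstacle here; the only subtlety worth flagging is that the factorization $v = v_1 v_2$ need not be reduced, but this is harmless, since all we use is that $v_1$ and $v_2$ arise as products of subwords of reduced expressions for $z$ and $x$, which already forces $v_1 \le z$ and $v_2 \le x$. If one preferred not to invoke the subexpression characterization, the alternative would be induction on $\ell(x)$, peeling one simple reflection $s$ off the right end of $\underline{x}$ at a time: writing $w = u.s$ with $u = z.(\text{the rest of }x)$ and $us > u$, one applies the elementary dichotomy ``$v \le us \Rightarrow v \le u$ or $vs \le u$'' together with the inductive hypothesis, and assembles the new $x'$ from the previous one and possibly an extra factor $s$ (verifying $x' \le x^{-1}$ with the help of Lemma~\ref{starprod}). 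This route works but is longer and fussier, so I would present the subexpression argument.
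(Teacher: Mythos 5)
Your argument is correct, but it takes a different route from the paper. You argue directly from the subword characterization: concatenate reduced expressions for $z$ and $x$ to get a reduced expression for $w=z.x$, realize $v$ as (the product of) a subword, split that subword at the seam as $v=v_1v_2$ with $v_1\le z$, $v_2\le x$, and set $x'=v_2^{-1}$, using invariance of the Bruhat order under inversion; your reduction of the parenthetical case to the first via $v^{-1}\le w^{-1}=z^{-1}.x^{-1}$ also checks out. The paper instead proves the rank-one case $x=s$ by hand (splitting into $vs<v$, where the subword property gives $vs\le z$, and $vs>v$, where the lifting property of \cite[Proposition 2.2.7]{BjornerBrenti} gives $v\le ws=z$) and then iterates $\ell(x)$ times, exactly the ``peeling'' alternative you sketch at the end. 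Your version is shorter and avoids the induction bookkeeping entirely (note that the paper's final sentence ``apply the case of a simple reflection $\ell(x)$ times'' silently requires assembling the accumulated $x'$ and checking $x'\le x^{-1}$, e.g.\ via Lemma~\ref{starprod}, which you correctly flag); the paper's version stays parallel in style to Lemma~\ref{liftinglemma} and only ever invokes the rank-one lifting property. Either proof is acceptable.
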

\begin{proof}
First we consider the case when $x=s$ is a simple reflection, so that $w=z.s$. 
If $vs<v$, then both $v$ and $w$ have reduced expressions ending at $s$ and, by the subexpression definition of the Bruhat order, we have $vs\leq z$. So $x'=s$ works.  
If $vs>v$, then the lifting property \cite[Proposition 2.2.7]{BjornerBrenti}
gives $v\leq ws=z$ and thus $x'$ can be taken to be the identity element.
For more general $x$, we apply the case of a simple reflection $\ell(x)$ times.
\end{proof}

We state and prove the following lemma from \cite[Proposition 1.2.14]{WThesis} for convenience and completeness.
\begin{lem}\label{lem.posetprojection}
    For $I,K,J,L\subset S$ with $I\subset K$ and $J\subset L$, let
    \begin{equation}
        \pi:W_I\backslash W/W_J\to W_K\backslash W/W_{L}
    \end{equation} 
    be the quotient map. Then
    \begin{enumerate}
        \item\label{item.po}  if $p\leq p'$ in $W_I\backslash W/W_J$ then $\pi(p)\leq \pi(p')$ in $W_K\backslash W/W_L$, i.e., $\pi$ is a morphism of posets;
        \item\label{item.inversepo} for each $q\in W_K\backslash W/W_L$ we have
        \[\pi\inv (\{\leq q\})=\{ \leq \pi\inv(q)\} 
        .\]
    \end{enumerate}
\end{lem}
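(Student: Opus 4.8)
My plan is to deduce both statements from a single auxiliary fact, which I will call the \emph{descent lemma}: if $m\in W$ is the minimal element of some $(A,B)$-coset and $m\le w$ in the Bruhat order on $W$, then $m\le\underline r$, where $r$ denotes the $(A,B)$-coset of $w$. Granting this, part~\ref{item.po} is immediate. If $p\le p'$, then $\underline{\pi(p)}\le\underline p\le\underline{p'}$: the first inequality because $\underline p$ lies in the $(K,L)$-coset $\pi(p)$ whose Bruhat-minimum is $\underline{\pi(p)}$, the second by definition of $p\le p'$. Since $\underline{\pi(p)}$ is the minimal element of a $(K,L)$-coset and $\underline{p'}$ lies in the $(K,L)$-coset $\pi(p')$, the descent lemma with $(A,B)=(K,L)$ gives $\underline{\pi(p)}\le\underline{\pi(p')}$, i.e.\ $\pi(p)\le\pi(p')$. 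The inclusion $\{\le\pi^{-1}(q)\}\subseteq\pi^{-1}(\{\le q\})$ of part~\ref{item.inversepo} is then part~\ref{item.po} applied to an inequality $p\le p'$ with $\pi(p')=q$.

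For the reverse inclusion of part~\ref{item.inversepo}, I would argue as follows. Suppose $\pi(p)\le q$, i.e.\ $\underline{\pi(p)}\le\underline q$. Using the description of the longest element of a double coset as a Demazure product (available from \cite{EKo}), $\overline{\pi(p)}=w_K*\underline{\pi(p)}*w_L$ and $\overline q=w_K*\underline q*w_L$, so $\overline{\pi(p)}\le\overline q$ by two applications of \Cref{starprod} (one being its evident left-handed analogue, proved mutatis mutandis). Since $\underline p$ and $\overline{\pi(p)}$ both lie in the $(K,L)$-coset $\pi(p)$, this gives $\underline p\le\overline{\pi(p)}\le\overline q$. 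Applying the descent lemma with $(A,B)=(I,J)$ to the $(I,J)$-coset minimum $\underline p$ and the element $\overline q$: if $p'$ is the $(I,J)$-coset of $\overline q$, then $\underline p\le\underline{p'}$, i.e.\ $p\le p'$; and $p'\subseteq W_K\overline q W_L=q$ since $I\subseteq K$ and $J\subseteq L$, so $p'\in\pi^{-1}(q)$ and $p\in\{\le\pi^{-1}(q)\}$. (One can sidestep the Demazure description here: write $\underline p=u.\underline{\pi(p)}.v$ with $u\in W_K$, $v\in W_L$ — such a reduced factorization exists because $\underline{\pi(p)}$ is the minimal representative of $\pi(p)$ — transport $u$ across $\underline{\pi(p)}\le\underline q$ using \Cref{liftinglemma} and then $v$ using \Cref{starprod} to reach an element of $q$ above $\underline p$, and finish with the descent lemma.)

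The real work is the descent lemma. Given $w$, I would reach $\underline r$ by alternating minimization: replace $w$ by the minimal-length representative of its left coset $W_A w$, then by the minimal-length representative of its right coset with respect to $W_B$, and iterate. Each replacement weakly decreases length and does not change the $(A,B)$-coset, so the process terminates at an element $w_\infty$ that is simultaneously left-$W_A$-minimal and right-$W_B$-minimal. The crucial point — the one place where the subtlety of double cosets really enters — is that such a $w_\infty$ must be the \emph{global} minimal-length representative of its double coset, hence equal to $\underline r$: if $\ell(sw_\infty)>\ell(w_\infty)$ for all $s\in A$ and $\ell(w_\infty s)>\ell(w_\infty)$ for all $s\in B$, then $\ell(uw_\infty)=\ell(u)+\ell(w_\infty)$ for $u\in W_A$ and $\ell(w_\infty v)=\ell(w_\infty)+\ell(v)$ for $v\in W_B$, whence for any $u\in W_A$, $v\in W_B$ we get $\ell(uw_\infty v)\ge\ell(uw_\infty)-\ell(v)=\ell(u)+\ell(w_\infty)-\ell(v)$ and likewise $\ell(uw_\infty v)\ge\ell(w_\infty)+\ell(v)-\ell(u)$, and summing gives $\ell(uw_\infty v)\ge\ell(w_\infty)$ (this fact may also simply be quoted from \cite{EKo}). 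Finally, each minimization step is monotone for the Bruhat order — $a\le b$ implies the minimal-length representative of $W_A a$ is $\le$ that of $W_A b$, and similarly on the right — which is precisely \Cref{liftinglem2}: factoring $b$ through the minimal representative $z$ of $W_A b$ (resp.\ $bW_B$) with parabolic cofactor $x$ in $W_A$ (resp.\ $W_B$), the lemma supplies $x'\le x^{-1}$, necessarily in that parabolic since parabolic subgroups are downward-closed, with $x'a\le z$ (resp.\ $ax'\le z$), so the minimal representative of $W_A a$ (resp.\ $aW_B$) is $\le z$. Since the minimal element $m$ is itself fixed by every step (being left-$W_A$- and right-$W_B$-minimal), running the process from $w\ge m$ and propagating the inequality yields $\underline r=w_\infty\ge m$.

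I expect the main obstacle to be the descent lemma, and within it the observation that left-minimality together with right-minimality forces global minimality of a double coset representative — a statement that is not automatic, precisely because minimizing on one side can destroy minimality on the other, which is also why a single-step argument fails and the alternating procedure is required. Everything else is bookkeeping built from \Cref{starprod} (and its left analogue), \Cref{liftinglem2}, and the Demazure-product description of $\overline{(\,\cdot\,)}$ for double cosets.
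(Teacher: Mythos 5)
Your proof is correct, but it takes a genuinely different route from the paper's. The paper proves both parts directly: it writes the relevant coset minima via two-sided reduced factorizations such as $\mi{p}=x.\mi{\pi(p)}.y$ with $x\in W_K$, $y\in W_L$, and applies \Cref{liftinglemma} and \Cref{liftinglem2} on each side to shuttle inequalities between $\mi{p}$, $\mi{\pi(p)}$, $\mi{q}$ and $\mi{p'}$; it never needs the one-sided descent characterization of double-coset minima, the $*$-product formula for coset maxima, or a left-handed version of \Cref{starprod}. You instead isolate a single \emph{descent lemma} (equivalently, part (1) for the projection $W\to W_A\backslash W/W_B$), prove it by alternating left- and right-minimization — termination because each step fixes the element or strictly drops length, monotonicity of each one-sided step via \Cref{liftinglem2} together with Bruhat-closedness of parabolic subgroups, and the fact that simultaneous left- and right-minimality forces global minimality in the double coset (your length-averaging argument, or a citation) — and then bootstrap: part (1) and the inclusion $\{\le\pi^{-1}(q)\}\subseteq\pi^{-1}(\{\le q\})$ follow formally, while the reverse inclusion uses $\ma{\pi(p)}=w_K*\mi{\pi(p)}*w_L$ and \Cref{starprod} (plus its left analogue, which indeed needs its own mutatis mutandis proof) to pass through $\ma{q}$ before applying the descent lemma again with $(A,B)=(I,J)$; your parenthetical variant via \Cref{liftinglemma} is essentially the paper's maneuver. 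What each approach buys: your descent lemma is a clean, reusable statement that makes the order-theoretic bookkeeping formal and makes visible exactly where the subtlety of double cosets enters (left+right minimal implies globally minimal), whereas the paper's direct argument is shorter and self-contained, using only the two lifting lemmas it has just proved and avoiding the extra standard inputs you invoke.
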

\begin{proof} Pick a coset $p'$ in $W_I\backslash W/W_J$. Since $\mi{p'} \in \pi(p')$ we have $x'.\mi{\pi(p')}.y'= \mi{p'}$ for some $x'\in W_K$ and $y'\in W_L$. If $p \le p'$ then $\mi{p} \le \mi{p'}$. By Lemma~\ref{liftinglem2}, there exists $x\leq (x')\inv$ and $y\leq (y')\inv$ such that $x\mi{p}y\leq \mi{\pi(p')}$. Since $x\mi{p}y\in \pi(p)$, we have $\mi{\pi(p)}\leq x\mi{p}y\leq \mi{\pi(p')}$. Thus $\pi(p) \le \pi(p')$.

Let $q\in W_K\backslash W/W_L$ and $p\in W_I\backslash W/W_J$ with $p\in \{\leq \pi\inv (q)\}$. Then there exists $p'\in \pi\inv (q)$ such that $p\leq p'$, that is, that
$\mi{p}\leq \mi{p'}= x'.\mi{q}.y'$ for some $x'\in W_K$ and $y'\in W_L$. \Cref{liftinglem2} gives $x''\in W_K$ and $y''\in W_L$ such that $x''\mi{p}y''\leq \mi{q}$. Since $x''\mi{p}y''\in \pi(p)$ we have $\mi{\pi(p)}\leq x''\mi{p}y''\leq \mi{q}$. This shows $\pi\inv (\leq q)\supset \{ \leq \pi\inv(q)\}$.

Now suppose $\pi(p)\leq q$, i.e., $\mi{\pi(p)}\leq \mi{q}$, 
and write $\mi{p}=x.\mi{\pi(p)}.y$ for $x\in W_K,y\in W_L$.
By Lemma~\ref{liftinglemma} there exist $x'\leq x, y'\leq y$ such that $\mi{p}\leq x'.\mi{q}.y'=:w$. 
Since $w\in q$, the $(I,J)$-coset $p'=W_I w W_J$ belongs to $\pi\inv(q)$.
Thus it remains to show $\mi{p}\leq \mi{p'}$. For this, we write $w=z.\mi{p'}.v$ for some $z\in W_I,v\in W_J$ and apply Lemma~\ref{liftinglem2} to $\mi{p}\leq w$. This gives $z'\in W_I, v'\in W_J$ such that $z'\mi{p}v'\leq \mi{p'}$. But $z'\mi{p}v'\in p$ so $\mi{p}\leq z'\mi{p}v'$ and thus $\mi{p} \le \mi{p'}$. 
This completes the proof of $\pi\inv (\{\leq q\})\subset \{ \leq \pi\inv(q)\}$.
\end{proof}

\subsection{Subordinate paths}

\begin{defn}\label{defn:pathsubord} (see \cite[Definition 2.17]{EKo}) Let $I_{\bullet}= [I_0, \ldots, I_d]$ be a singlestep expression. A \emph{path subordinate to $I_{\bullet}$} is a sequence $p_{\bullet} = [p_0, \ldots, p_d]$ where $p_i$ is a $(I_0, I_i)$-coset. The sequence satisfies:
\begin{itemize}
\item $p_0 = p_{\id}$, the $(I_0, I_0)$-coset containing the identity.
\item If $I_k \subset I_{k+1}$ then $p_{k+1}$ is the unique double coset containing $p_k$.
\item If $I_k \supset I_{k+1}$ then $p_{k+1}$ is one of the double cosets contained in $p_k$.
\end{itemize}
We write $p_{\bullet} \subset I_{\bullet}$. The final $(I_0, I_d)$-coset $p_d$ is called the \emph{terminus} of the path, and denoted $\term(p_{\bullet})$. 
A path $p_\bullet\subset I_\bullet$ is said to be \textit{forward} if every time that $I_k \supset I_{k+1}$ then $\ma{p_{k+1}}=\ma{p_k}$. Each expression has a unique forward path. One can prove that if $p_{\bullet}$ is the forward path of $I_{\bullet}$, then $I_{\bullet} \expr \term(p_{\bullet}).$
\end{defn}

We can also summarize the last two conditions 
above by saying that $p_k \cap p_{k+1}$ is nonempty. Paths subordinate to an expression $I_{\bullet}$ are the singular analogue of subexpressions, and the sequence $p_{\bullet}$ is analogous to the Bruhat stroll of \cite[Section 2.4]{Soergelcalculus}.

If $\ux'$ is a subexpression of $\ux$ and $\uy'$ is a subexpression of $\uy$, it is clear that the concatenation $\underline{x'y'}$ is a subexpression of the concatenation $\underline{xy}$. It is less obvious how to ``concatenate'' two subordinate paths.
Recall from Definition~\ref{Coxmon} the star product. In \cite{EKo} a $*$-product was defined on double cosets. If $p$ is an $(I,J)$-coset and $q$ is a $(J,K)$-coset then $p * q$ is the $(I,K)$-coset satisfying $\ma{(p * q)} = \ma{p} * \ma{q}$ (see \cite[Lemma 2.7]{EKo} to see that this is well-defined). 

\begin{lem} \label{lem:inclusions} Let $p$ be an $(I,J)$-coset, $q$ be a $(J,K)$-coset, and $q'$ be a $(J,K')$-coset, with $K' \subset K$ and $q' \subset q$. Then $p * q' \subset p * q$. \end{lem}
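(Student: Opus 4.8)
The plan is to reduce the asserted set-theoretic containment $p*q'\subset p*q$ to a single identity between longest elements in $W$, and then to establish that identity from the definition of the $*$-product on double cosets together with \Cref{starprod}.

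First, since $K'\subset K$, the $(I,K')$-coset $p*q'$ is contained in exactly one $(I,K)$-coset, namely the $(I,K)$-coset $W_I\ma{p*q'}W_K$ of its longest element; hence $p*q'\subset p*q$ holds if and only if $W_I\ma{p*q'}W_K = p*q$, and, since a double coset is determined by its longest element, this is equivalent to $\ma{p*q} = w_I*\ma{p*q'}*w_K$ (recall that the longest element of a double coset $W_I x W_K$ is $w_I*x*w_K$). Because $\ma{p*q'}$ is the longest element of an $(I,K')$-coset, it is the longest element of the left coset $W_I\ma{p*q'}$, so $w_I*\ma{p*q'}=\ma{p*q'}$; invoking associativity of $*$ and the defining properties $\ma{p*q'}=\ma{p}*\ma{q'}$ and $\ma{p*q}=\ma{p}*\ma{q}$ of the $*$-product on cosets, the desired equality collapses to $\ma{p}*\ma{q}=\ma{p}*\ma{q'}*w_K$, which in turn follows once we prove
\begin{equation*}
\ma{q}=\ma{q'}*w_K .
\end{equation*}

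To prove this last identity I would argue both inequalities. For ``$\leq$'': writing a reduced word for $w_K$ and unfolding the $*$-product, $\ma{q'}*w_K$ is obtained from $\ma{q'}$ by right multiplication by a subword of that word, so $\ma{q'}*w_K\in\ma{q'}W_K\subset W_J\ma{q'}W_K=q$ (the last equality because $\ma{q'}\in q'\subset q$); hence $\ma{q'}*w_K\leq\ma{q}$. For ``$\geq$'': $\ma{q}$ is the longest element of the double coset $q=W_J\ma{q'}W_K$, so $\ma{q}=w_J*\ma{q'}*w_K$, and $w_J*\ma{q'}=\ma{q'}$ because $\ma{q'}$, being the longest element of the $(J,K')$-coset $q'$, is the longest element of the left coset $W_J\ma{q'}$ (equivalently, it has a reduced word beginning with every $s\in J$); thus $\ma{q}=\ma{q'}*w_K$. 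Substituting back, $\ma{p}*\ma{q}=\ma{p}*(\ma{q'}*w_K)=(\ma{p}*\ma{q'})*w_K=\ma{p*q'}*w_K$, which is exactly what the reduction requires, completing the proof.

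The argument is essentially formal; the only point demanding care is the bookkeeping with the $*$-product, namely the standard facts (implicit in \cite{EKo}) that the longest element of $W_J x W_K$ equals $w_J*x*w_K$ and that a longest element of a coset is fixed under $*$-multiplication by the ambient $w_J$ or $w_K$, combined with careful use of associativity of $*$. If one prefers to avoid quoting the formula for the longest element of a double coset, one can instead observe that appending the step ``$\subset K$'' to a reduced expression for $q'$ produces an expression for $q$, and read off $\ma{q}=\ma{q'}*w_K$ directly from \eqref{expressp}.
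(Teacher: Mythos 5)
Your proof is correct and takes essentially the same route as the paper's: both arguments reduce the containment to relating $\ma{q}$ and $\ma{q'}$ by right multiplication from $W_K$ (you via the identity $\ma{q}=\ma{q'}*w_K$ from the max-of-double-coset formula, the paper via $\ma{q}=\ma{q'}.y$ with $y\in W_K$ from \cite[Lemma 2.15]{EKo}), and then apply $\ma{p}*(-)$ to conclude that $\ma{p*q}$ and $\ma{p*q'}$ differ by an element of $W_K$, forcing $p*q'\subset p*q$. One small remark: your ``$\leq$'' half is redundant, since your ``$\geq$'' argument already produces the equality $\ma{q}=\ma{q'}*w_K$ outright.
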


\begin{proof} By \cite[Lemma 2.15]{ EKo} we know that $\ma{q} = \ma{q'} . y$ for some $y \in W_{K}$. Then,  by \cite[Lemmas 2.3 and 2.2]{EKo}, $\ma{p} * \ma{q} = \ma{p} * \ma{q'} * y = (\ma{p} * \ma{q'}) . y'$ for some $y' \le y$. In particular, $\ma{p} * \ma{q}$ and $\ma{p} * \ma{q'}$ are in the same right $W_{K}$-coset, implying $p * q' \subset p * q$. \end{proof}

\begin{defn}\label{def.concatpath} Let $P_{\bullet} = [P_0, P_1, \ldots, P_c]$ be an expression and $p_{\bullet}$ a subordinate path with terminus $p$, and similarly for $Q_{\bullet} = [Q_0, \ldots, Q_d]$, $q_{\bullet}$ and $q$. Suppose $P_c = Q_0$, so the composition $P_{\bullet} \circ Q_{\bullet}$ exists. Then define the concatenation $p_{\bullet} \circ q_{\bullet}$ as the sequence of cosets
\begin{equation} [p_0, p_1, \ldots, p_c = p = p * q_0, p * q_1, \ldots, p * q_d = p * q]. \end{equation}
\end{defn}

Note that $p * q_0 = p$ since $q_0 = q_{\id}$ is the identity $(P_c,P_c)$-coset, which acts as the identity for the $*$-product  (recall that for any $J\subset S,$ we have $w_J*w_J=w_J$). We remark that if $P_{\bullet}\expr p$ and $Q_{\bullet}\expr q$ then by definition of the star product on cosets and by Equation~\eqref{expressp}, we have $P_{\bullet} \circ Q_{\bullet}\expr p*q$.

\begin{lem}\label{lem.pqsubPQ} The sequence $p_{\bullet} \circ q_{\bullet}$ is a path subordinate to $P_{\bullet} \circ Q_{\bullet}$. \end{lem}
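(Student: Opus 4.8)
The plan is to verify directly the three defining conditions of Definition~\ref{defn:pathsubord} for the sequence $p_\bullet \circ q_\bullet$ with respect to the expression $P_\bullet \circ Q_\bullet = [P_0, \ldots, P_c = Q_0, Q_1, \ldots, Q_d]$. First, the initial term of $p_\bullet \circ q_\bullet$ is the initial term $p_0$ of $p_\bullet$, and $p_0 = p_{\id}$ since $p_\bullet \subset P_\bullet$; so the first condition holds. Next, among the first $c+1$ terms $p_0, \ldots, p_c$, each consecutive transition is exactly a transition of the path $p_\bullet \subset P_\bullet$, so these transitions already satisfy the required conditions. Finally, note that each term $p * q_k$ for $0 \le k \le d$ is a $(P_0, Q_k)$-coset: indeed $p$ is a $(P_0, P_c)$-coset, $q_k$ is a $(Q_0, Q_k)$-coset, and $Q_0 = P_c$, so by the definition of the $*$-product recalled before Lemma~\ref{lem:inclusions}, $p * q_k$ is a $(P_0, Q_k)$-coset. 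Hence every term of $p_\bullet \circ q_\bullet$ has the type prescribed by $P_\bullet \circ Q_\bullet$, and it only remains to check the transitions in the ``$Q$-part''.

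For $k = 1, \ldots, d$, the transition from position $c + k - 1$ to position $c + k$ in $p_\bullet \circ q_\bullet$ goes from $p * q_{k-1}$ to $p * q_k$ (using $p * q_0 = p = p_c$, as noted after Definition~\ref{def.concatpath}), and the corresponding sets in $P_\bullet \circ Q_\bullet$ are $Q_{k-1}$ and $Q_k$. If $Q_{k-1} \subset Q_k$, then since $q_\bullet \subset Q_\bullet$ the coset $q_k$ is the unique double coset containing $q_{k-1}$, so in particular $q_{k-1} \subset q_k$ as subsets of $W$; Lemma~\ref{lem:inclusions}, applied with $q' = q_{k-1}$, $q = q_k$ and the inclusion of parabolic subsets $Q_{k-1} \subset Q_k$, gives $p * q_{k-1} \subset p * q_k$. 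Since $p * q_{k-1}$ is a $(P_0, Q_{k-1})$-coset and $Q_{k-1} \subset Q_k$, it lies in exactly one $(P_0, Q_k)$-coset, which must therefore be $p * q_k$; this is precisely the condition required. If instead $Q_{k-1} \supset Q_k$, then $q_k$ is one of the double cosets contained in $q_{k-1}$, so $q_k \subset q_{k-1}$ as subsets of $W$; Lemma~\ref{lem:inclusions}, now applied with $q' = q_k$, $q = q_{k-1}$ and $Q_k \subset Q_{k-1}$, gives $p * q_k \subset p * q_{k-1}$. Thus $p * q_k$ is a $(P_0, Q_k)$-coset contained in $p * q_{k-1}$, which is one of the options allowed at this step. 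All three conditions are verified, so $p_\bullet \circ q_\bullet$ is a path subordinate to $P_\bullet \circ Q_\bullet$.

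I do not expect a genuine obstacle here: the entire argument is powered by Lemma~\ref{lem:inclusions}, which converts the set-theoretic containment $q_{k-1} \subset q_k$ (respectively $q_k \subset q_{k-1}$) recording a step of $q_\bullet$ into the corresponding containment after applying $p * (-)$. The only points that need a little care are the index bookkeeping across the concatenation together with the identification $p_c = p * q_0$, and invoking Lemma~\ref{lem:inclusions} with the coset of larger parabolic type playing the role of ``$q$'' in both the ascending and descending cases.
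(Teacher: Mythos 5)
Your proof is correct and follows essentially the same route as the paper: the paper also reduces everything to Lemma~\ref{lem:inclusions} applied in the two cases $q_{k-1}\subset q_k$ and $q_k\subset q_{k-1}$, only phrasing the subordination conditions via the equivalent criterion that adjacent cosets have nonempty intersection, whereas you verify the bulleted conditions of Definition~\ref{defn:pathsubord} directly. No gap; the extra bookkeeping you supply (coset types and the seam $p_c=p*q_0$) is exactly what the paper leaves implicit.
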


\begin{proof} We need to verify that each term of $p_{\bullet} \circ q_{\bullet}$ is a coset of the appropriate kind, and that the intersection of two adjacent terms in the sequence is nonempty. The only interesting part is to prove that $(p * q_i) \cap (p * q_{i+1})$ is nonempty. Either $q_i \subset q_{i+1}$ or $q_{i+1} \subset q_i$, and the result follows from Lemma~\ref{lem:inclusions} either way. \end{proof}

\begin{lem}  A concatenation of forward paths is a forward path. \end{lem}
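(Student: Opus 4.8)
The plan is to unwind the definitions and check the forward condition index-by-index on the concatenated path $p_{\bullet} \circ q_{\bullet}$ subordinate to $P_{\bullet} \circ Q_{\bullet}$. Recall from Definition~\ref{def.concatpath} that this sequence is $[p_0, \ldots, p_c = p, p * q_1, \ldots, p * q_d]$, and that the underlying expression $P_{\bullet} \circ Q_{\bullet}$ is $[P_0, \ldots, P_c = Q_0, Q_1, \ldots, Q_d]$. The forward condition must be verified at every descending step $I_k \supset I_{k+1}$ of $P_{\bullet} \circ Q_{\bullet}$, namely that $\ma{p_{k+1}} = \ma{p_k}$. There are two ranges of indices to consider: those lying entirely in the $P_{\bullet}$-part, and those lying in the $Q_{\bullet}$-part (there is no descending step straddling the junction, since the junction is simply the identification $P_c = Q_0$).

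First I would dispose of the easy range: for descending steps within the $P_{\bullet}$-part, the relevant terms of $p_{\bullet} \circ q_{\bullet}$ are literally the terms of $p_{\bullet}$, which is a forward path by hypothesis, so there is nothing to prove. The substance is in the $Q_{\bullet}$-part. Here a descending step $Q_k \supset Q_{k+1}$ of $P_{\bullet} \circ Q_{\bullet}$ corresponds to a descending step of $Q_{\bullet}$, and the relevant terms of the concatenated path are $p * q_k$ and $p * q_{k+1}$. Since $q_{\bullet}$ is forward, $\ma{q_{k+1}} = \ma{q_k}$; so I need to deduce $\ma{p * q_{k+1}} = \ma{p * q_k}$ from $\ma{q_{k+1}} = \ma{q_k}$.

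This last implication is where the real content sits, and it follows from the definition of the $*$-product on cosets together with the Coxeter-monoid structure: by definition $\ma{p * q_k} = \ma{p} * \ma{q_k}$ and $\ma{p * q_{k+1}} = \ma{p} * \ma{q_{k+1}}$, and if $\ma{q_k} = \ma{q_{k+1}}$ as elements of $W$ then these two $*$-products in $W$ are manifestly equal. Wait — there is a subtlety: $q_k$ is a $(Q_0, Q_k)$-coset and $q_{k+1}$ is a $(Q_0, Q_{k+1})$-coset, so $p * q_k$ and $p * q_{k+1}$ are cosets with different right-hand subsets $Q_k \ne Q_{k+1}$; nonetheless they have the same maximal element in $W$, which is exactly the forward condition $\ma{p*q_{k+1}} = \ma{p*q_k}$ that we want. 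So the argument is: apply the defining property of $*$ on cosets, reduce to an equality of $*$-products in $W$, and invoke $\ma{q_k} = \ma{q_{k+1}}$.

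I expect the only genuine obstacle — really a bookkeeping point rather than a difficulty — is making sure the indexing of descending steps of $P_{\bullet} \circ Q_{\bullet}$ is correctly matched against descending steps of $P_{\bullet}$ and of $Q_{\bullet}$, and confirming that the concatenation point contributes no descending step. Once that bookkeeping is in place, the verification in each range is immediate from the hypotheses and the definition of the $*$-product, so the proof is short.
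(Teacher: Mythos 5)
Your proposal is correct: it is precisely the definitional check that the paper's proof leaves implicit (the paper simply says the statement is obvious from the definitions), with the only content being that at a descending step in the $Q_{\bullet}$-part one has $\ma{p*q_{k+1}}=\ma{p}*\ma{q_{k+1}}=\ma{p}*\ma{q_k}=\ma{p*q_k}$ by the defining property of the $*$-product on cosets. So you take essentially the same (and only) approach, just written out in full.
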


\begin{proof} This is obvious from the definitions. \end{proof}

\begin{rem} The converse to Lemma \ref{lem.pqsubPQ} is false: not every path subordinate to $P_{\bullet} \circ Q_{\bullet}$ has the form $p_{\bullet} \circ q_{\bullet}$. For example, there is a unique path subordinate to $[\mt,s]$ and a unique path subordinate to $[s,\mt]$, but there are two paths subordinate to $[\mt,s,\mt]$. Of these two, only the forward path is a concatenation, as forward paths are always guaranteed to be. \end{rem}

\subsection{Termini and concatenation}

Let us consider the set of termini
\begin{equation}
    \Term(I_\bullet) = \{\term(q_\bullet)\in W_I\backslash W / W_J\ |\ q_\bullet\subset I_\bullet\}
\end{equation}
of an expression $I_\bullet\expr p\in W_I\backslash W / W_J$.

Definition~\ref{def.concatpath} and Lemma~\ref{lem.pqsubPQ} say that
\begin{equation}\label{Term*Term}
\Term(I_\bullet)*\Term(J_\bullet)\subset \Term(I_\bullet\circ J_\bullet)
\end{equation}
holds for composable expressions $I_\bullet, J_\bullet$.
The inequality in \eqref{Term*Term} may be strict. For example, we have \[\Term([I,Is,I])=W_I\backslash W_{Is}/ W_I\]
while
\[\Term([I,Is])*\Term([Is,I])=\{W_IW_{Is}\}*\{W_{Is}W_I\}=\{W_Iw_{Is}W_I\}.\] 
More generally, we have the following comparison result.

\begin{lem}
Let $[I_0,\ldots,I_m]$ be a $(I,J)$-expression. Then
\begin{equation}\label{Termup}
    \Term(I_\bullet\circ[J,Js]) = \Term(I_\bullet)*(W_JW_{Js})
\end{equation} 
and 
\begin{align}\label{Termdown}
    \Term(I_\bullet\circ[J,J\setminus t]) &= \{q\in W_I\backslash W/W_{J\setminus t}\ |\  q\subset p\in\Term(I_\bullet)\}\\
    &\supset \{W_I\ma{p} W_{J\setminus t}\ |\ p\in \Term(I_\bullet)\}
    =
    \Term(I_\bullet)*(W_JW_{J\setminus t}).\nonumber
\end{align} 
\end{lem}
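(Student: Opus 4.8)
The plan is to analyze the two cases — adding an index ($[J,Js]$) and removing an index ($[J, J\setminus t]$) — separately, in each case describing what the concatenation operation does to the set of termini. Throughout I write $\Term(I_\bullet)$ as the set of cosets already produced, and I use the definition of concatenation of paths (Definition~\ref{def.concatpath}) together with Lemma~\ref{lem.pqsubPQ}. The key observation is that $[J,Js]$ and $[J,J\setminus t]$ each admit a unique subordinate path (going up is deterministic; going down from $[J]$ is a length-zero one-step, hence forced), with terminus the coset $W_J W_{Js}$ (respectively $W_J W_{J\setminus t}$). So by Definition~\ref{def.concatpath} every subordinate path of the concatenation restricts to a subordinate path $p_\bullet$ of $I_\bullet$ on the first $m+1$ terms, followed by the single extra coset $\term(p_\bullet) * (W_J W_{Js})$ (resp.\ $\term(p_\bullet) * (W_J W_{J\setminus t})$). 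This already gives the inclusion ``$\subset$'' in \eqref{Termup} and the final equality of \eqref{Termdown}, i.e.\ $\Term(I_\bullet)*(W_J W_{J\setminus t}) = \{W_I\ma p W_{J\setminus t} \mid p\in\Term(I_\bullet)\}$; the latter is just unwinding that $\ma{p * (W_J W_{J\setminus t})} = \ma p * w_{J\setminus t}$, which since $w_{J\setminus t}\leq w_J$ and $\ma p$ already ends with (a reduced expression ending in) $w_J$-compatible data, equals $\ma p$, so $p*(W_JW_{J\setminus t})$ is simply the $(I,J\setminus t)$-coset $W_I\ma p W_{J\setminus t}$.

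For \eqref{Termup} the reverse inclusion ``$\supset$'' is immediate, since $\Term(I_\bullet)*(W_JW_{Js})\subset \Term(I_\bullet\circ[J,Js])$ is a special case of \eqref{Term*Term}. Hence \eqref{Termup} is an equality. For the middle line of \eqref{Termdown} — the equality $\Term(I_\bullet\circ[J,J\setminus t]) = \{q \mid q\subset p\in \Term(I_\bullet)\}$ — I argue by double inclusion. The inclusion ``$\subset$'' follows because a subordinate path of the concatenation ends in a coset $q$ with $q\cap \big(\term(p_\bullet)*(W_JW_{J\setminus t})\big)$ nonempty where $p_\bullet\subset I_\bullet$; since the last step of $[J,J\setminus t]$ is a one-index removal, the terminus $q$ is one of the $(I,J\setminus t)$-cosets contained in $\term(p_\bullet)\in\Term(I_\bullet)$, giving $q\subset p$ with $p=\term(p_\bullet)$. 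For ``$\supset$'': given any $p\in\Term(I_\bullet)$, realized as $\term(p_\bullet)$ for some $p_\bullet\subset I_\bullet$, and any $(I,J\setminus t)$-coset $q\subset p$, the sequence $p_\bullet$ followed by $q$ is a legal subordinate path of $I_\bullet\circ[J,J\setminus t]$ because $p\cap q = q\neq\mt$; hence $q\in\Term(I_\bullet\circ[J,J\setminus t])$. The displayed ``$\supset$'' between the first and second lines of \eqref{Termdown} then holds because $W_I\ma p W_{J\setminus t}$ is one of the cosets contained in $p$ (namely the one containing $\ma p$).

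The main obstacle — really the only subtle point — is verifying the claim that $[J,J\setminus t]$ (as a standalone expression, i.e.\ a $(J,J)$-expression starting at $I_0=J$) has a unique subordinate path and that its terminus is exactly the coset acting as $*(W_JW_{J\setminus t})$; one must be careful that ``starting at $p_{\id}$'' here means the identity $(J,J)$-coset, and that removing $t$ from it lands in the unique $(J,J\setminus t)$-coset it contains, namely $W_{J\setminus t}$ itself, whose maximal element is $w_{J\setminus t} = \id * w_{J\setminus t}$. Feeding this through Definition~\ref{def.concatpath}, which multiplies on the left by the running terminus $p$ of the $I_\bullet$-part, shows the new last coset is $p * (W_J W_{J\setminus t})$, and by Lemma~\ref{lem:inclusions} (with $K = J$, $K' = J\setminus t$) this coset is contained in $p * (\text{identity } (J,J)\text{-coset}) $... more precisely one checks directly via $\ma{\cdot}$ that it is $W_I\ma p W_{J\setminus t}$. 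Once this bookkeeping is pinned down, both equalities and the inclusion fall out of the definitions with no further computation.
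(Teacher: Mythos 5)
Your argument is essentially the paper's: for the up-step, the restriction of any subordinate path to the first $m+1$ terms is subordinate to $I_\bullet$ and the last step is forced to be $\term(p_\bullet)*(W_JW_{Js})$, while the reverse inclusion is \eqref{Term*Term}; for the down-step, the first equality in \eqref{Termdown} comes from the third bullet of Definition~\ref{defn:pathsubord} and the last from the definition of the $*$-product. Your double-inclusion treatment of the middle equality and your identification $p*(W_JW_{J\setminus t})=W_I\ma{p}W_{J\setminus t}$ are correct.

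Two slips should be fixed, though neither sinks the proof. First, the blanket claim in your ``key observation'' paragraph --- that \emph{every} subordinate path of $I_\bullet\circ[J,J\setminus t]$ ends in $\term(p_\bullet)*(W_JW_{J\setminus t})$ --- is false: the down-step is non-deterministic, the terminus may be \emph{any} $(I,J\setminus t)$-coset contained in $\term(p_\bullet)$, and if your claim were true the containment in \eqref{Termdown} would always be an equality, contradicting the example $\Term([I,Is,I])$ given just before the lemma (this is exactly the failure of the converse to Lemma~\ref{lem.pqsubPQ} noted in the paper). Your own later argument for the middle equality of \eqref{Termdown} uses the correct condition and silently overrides this claim; the uniqueness-of-path observation is only valid, and only needed, for $[J,Js]$. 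Second, the terminus of the unique path subordinate to the standalone expression $[J,J\setminus t]$ is the coset $W_JW_{J\setminus t}$, which as a set is $W_J$ with maximal element $w_J$, not ``$W_{J\setminus t}$ with maximal element $w_{J\setminus t}$''; fortunately $\ma{p}*w_{J\setminus t}=\ma{p}*w_J=\ma{p}$, so your computation of $p*(W_JW_{J\setminus t})$ still lands on $W_I\ma{p}W_{J\setminus t}$.
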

\begin{proof}
Let us first prove \eqref{Termup}. By  \eqref{Term*Term} it is enough to show `$\subset$'. Let $m$ be the width of $I_\bullet$, i.e., $I_\bullet = [I_0,\cdots,I_m]$. If $[p_0,\cdots,p_{m+1}]\subset I_\bullet\circ [J,Js]$ then $[p_0,\cdots p_m]\subset I_\bullet$ and thus $p_m\in\Term(I_\bullet)$.
Since $p_{m+1}=p_m*(W_JW_{Js})$ we have $\term(p_\bullet)\in \Term(I_\bullet)*(W_JW_{Js})$ as desired.

In \eqref{Termdown}, 
the first equality follows from the third bullet in Definition~\ref{defn:pathsubord},  and the last equality follows from the definition of the $*$-product of double cosets.
\end{proof}

\subsection{Bruhat order and subordinate paths}

\begin{thm}\label{newbruhatthm}
    Let $I,J\subset S$ be finitary and let $p\expr I_\bullet$ be a $(I,J)$-expression. Then we have
    \begin{equation}\label{eq.lep}
        \Term(I_\bullet) 
        =\lep.
    \end{equation}
\end{thm}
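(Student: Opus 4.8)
The plan is to prove the two inclusions $\Term(I_\bullet) \subseteq \{\leq p\}$ and $\{\leq p\} \subseteq \Term(I_\bullet)$ separately, in both cases inducting on the width $m$ of the expression $I_\bullet = [I_0, \ldots, I_m]$ and using the concatenation results of the previous subsection. For the base case $m = 0$, both sides are the singleton $\{p_{\id}\}$. For the inductive step, write $I_\bullet = I'_\bullet \circ [I_{m-1}, I_m]$ where $I'_\bullet = [I_0, \ldots, I_{m-1}]$ expresses some coset $p'$, and split into the ``up'' case $I_{m-1} \subset I_m$ and the ``down'' case $I_{m-1} \supset I_m$. By the inductive hypothesis, $\Term(I'_\bullet) = \{\leq p'\}$.

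For the inclusion $\Term(I_\bullet) \subseteq \{\leq p\}$: in the up case, \eqref{Termup} gives $\Term(I_\bullet) = \{\leq p'\} * (W_J W_{I_m})$ where $I_{m-1} = J \setminus s$ in the notation there (I'll be careful to match up names); by Lemma~\ref{starprod}, $q \leq p'$ implies $\ma{q} * w_{I_m} \leq \ma{p'} * w_{I_m}$, hence $\ma{q * (W_{I_{m-1}} W_{I_m})} \leq \ma{p}$, and translating back to minimal elements via $\ell(r) = 2\ell_W(\ma r) - \ell(I) - \ell(J)$-type reasoning (or directly, since $\ma{q} \leq \ma{p}$ forces $q \leq p$ when the ambient parabolics agree — this uses that Bruhat order on cosets can be read off the maximal elements too, which should follow from the recollections) we get $q * (W_{I_{m-1}}W_{I_m}) \leq p$. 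In the down case, \eqref{Termdown} says $\Term(I_\bullet) = \{q : q \subset p'' \in \{\leq p'\}\}$; since $p'$ itself lies in $\{\leq p'\}$ and the cosets contained in $p'$ are exactly $\pi^{-1}$-fibers in the sense of Lemma~\ref{lem.posetprojection}, monotonicity of $\pi$ and part~\ref{item.inversepo} of that lemma give that every such $q$ satisfies $q \leq$ (some coset contained in $p$) $\leq p$, using that $p$ is the forward terminus so $p \subset$ the appropriate bigger coset. I would phrase this cleanly using $\pi^{-1}(\{\leq \pi(p')\}) = \{\leq \pi^{-1}(\pi(p'))\}$.

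For the reverse inclusion $\{\leq p\} \subseteq \Term(I_\bullet)$: given $q \leq p$, I want to produce a subordinate path with terminus $q$. In the up case, $p = p' * (W_{I_{m-1}} W_{I_m})$, and I need: given $q \leq p' * (W_{I_{m-1}}W_{I_m})$, find $q' \leq p'$ with $q' * (W_{I_{m-1}}W_{I_m}) = q$. This is exactly the kind of statement Lemma~\ref{liftinglem2} is built for — from $\mi{q} \leq \ma{p}$-type data one extracts a smaller element — and is the natural singular analogue of the classical fact that a subexpression of $\ux \cdot s$ restricts to a subexpression of $\ux$. In the down case, $q \leq p$ with $q$ an $(I, I_m)$-coset and $p$ an $(I, I_{m-1})$-coset with $I_m \subset I_{m-1}$; then $\pi(q) \leq \pi(p)$ where $\pi$ lands in $(I,I_{m-1})$-cosets, wait — rather $q$ already has the smaller parabolic, so $q \subset \pi^{-1}$... here I use Lemma~\ref{lem.posetprojection}\ref{item.inversepo} directly: $q \leq p$ with $\pi(q) \leq \pi(p) = $ (forward image), so there is $p'' \in \pi^{-1}(\pi(p))$ — equivalently a coset containing $q$... — I must be careful, but the upshot is $q \subset p''$ for some $p'' \leq p'$ (after one more application of the projection poset lemma to pass from the $I_{m-1}$-level back to being contained-in), so $q \in \Term(I'_\bullet \circ [I_{m-1}, I_m])$ by \eqref{Termdown}.

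The main obstacle I expect is the bookkeeping in the down case: reconciling ``$q$ is contained in some coset $\leq p'$'' with ``$q \leq p$'' requires carefully chaining Lemma~\ref{lem.posetprojection}, and one must verify that the forward path condition (ensuring $I_\bullet \expr p$, i.e. $\ma{p}$ is preserved under the downward step) is exactly what makes $p$ rather than some strictly smaller coset the right comparison point. A secondary subtlety is that in the up case, translating statements between minimal elements $\mi{\cdot}$ and maximal elements $\ma{\cdot}$ of cosets must be done via the parabolic factorizations $\mi{q} = x . \ma{q}$-type identities and Lemmas~\ref{liftinglemma}--\ref{liftinglem2}; I'd isolate ``$q \leq p$ iff $\ma q \leq \ma p$ when the parabolics agree'' as a small preliminary observation so the induction reads smoothly.
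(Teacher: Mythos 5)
Your proposal is correct and follows essentially the same route as the paper: induction on the width of $I_\bullet$, splitting the final step into the up/down cases, and combining the comparison \eqref{Termup}--\eqref{Termdown} with \Cref{lem.posetprojection}. The paper simply packages your two separate inclusions into single applications of that lemma, namely $\pi(\{\le p\})=\{\le \pi(p)\}$ for the up step and $\pi^{-1}(\{\le p\})=\{\le \pi^{-1}(p)\}$ for the down step, which absorbs the lifting-lemma and maximal-element bookkeeping you flag as the main obstacles.
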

\begin{proof}
    We prove \eqref{eq.lep} by induction on the width of $I_\bullet$.
    We assume $\Term(I_\bullet)=\lep$, where $p\expr I_\bullet=[I_0,\cdots,I_m]$ is a $(I,J)$-expression, and show that both  
    \begin{equation}\label{case.p+s}
     \Term(I_\bullet\circ [J,Js])= \{q\ |\ q\leq pW_{Js}=W_IpW_{Js}\}
    \end{equation}
    and 
    \begin{equation}\label{case.p-s}
        \Term(I_\bullet\circ [J,J\setminus t])= \{q\ |\ q\leq W_I\ma{p}W_{J\setminus t}\}
    \end{equation} 
    hold.
    The base case $p=W_I W_I\expr [I]$ does satisfy $\Term([I])=\{p\}=\lep$.

     For \eqref{case.p+s}, consider the quotient map
     $\pi:W_I\backslash W/W_J\to W_I\backslash W/W_{Js}$, with which we can write
     $\Term(I_\bullet)*(W_JW_{Js})=\pi(\Term(I_\bullet))$. 
     Then we have 
     \[\Term(I_\bullet\circ[J,Js])=\Term(I_\bullet)*(W_JW_{Js})=\pi(\Term(I_\bullet))=\pi(\lep)=\ \leq \pi(p)\]
    where we use \eqref{Termup} in the first equality and Lemma~\ref{lem.posetprojection} in the last equality. This proves \eqref{case.p+s} since $\pi(p)=pW_{Js}$.

    For \eqref{case.p-s}, consider the quotient map 
     $\pi:W_I\backslash W/W_{J\setminus t}\to W_I\backslash W/W_J$. Then we have
     \[\Term(I_\bullet\circ[J,J\setminus t]) = 
     \pi\inv(\Term(I_\bullet))=\pi\inv(\lep)=\{ \leq \pi\inv(p)\}\]
    where we use again Lemma~\ref{lem.posetprojection} in the last equality. We complete the proof by observing that $\pi\inv(p)$ has a unique maximal element $W_I\ma{p}W_{J\setminus t}$.
\end{proof}

\begin{thm}\label{thm:bruhat}
Let $p,q$ be $(I,J)$-cosets, for fixed finitary subsets $I,J\subset S$. The following conditions are equivalent.
\begin{enumerate}
    \item\label{bruhatma} $\ma{p}\leq \ma{q}$ in $W$.
    \item\label{bruhatmi} $\mi{p}\leq \mi{q}$ in $W$ (i.e., $p\leq q$).
    \item\label{bruhat1} There exists  a reduced expression $I_\bullet \expr q$ and a subordinate path $p_\bullet\subset I_\bullet$ such that $p=\term(p_\bullet)$.
    \item\label{bruhatnonrex2} For any expression $I_\bullet \expr q$, there exists a subordinate path $p_\bullet\subset I_\bullet$ such that $p=\term(p_\bullet)$.
\end{enumerate}
\end{thm}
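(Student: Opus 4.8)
The plan is to deduce Theorem~\ref{thm:bruhat} from the already-established Theorem~\ref{newbruhatthm}, which does the real combinatorial work. Theorem~\ref{newbruhatthm} says precisely that for a fixed $(I,J)$-expression $I_\bullet \expr q$, the set $\Term(I_\bullet)$ of termini of subordinate paths equals $\{\leq q\}$, the set of $(I,J)$-cosets $\leq q$. So the content of Theorem~\ref{thm:bruhat} is mostly a matter of unwinding definitions and checking that the various characterizations of $p \leq q$ coincide.

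First I would observe that \eqref{bruhatmi} is the \emph{definition} of $p \leq q$, so there is nothing to prove there; it is included for emphasis. The equivalence of \eqref{bruhatma} and \eqref{bruhatmi} is a standard fact about double cosets, but if a self-contained argument is wanted one can derive it from the length identity $\ell(q) = 2\ell_W(\ma{q}) - \ell(I) - \ell(J)$ displayed in the excerpt together with the relation $\ma{q} = w_I . \mi{q} . w_J$ (so $\ma{q}$ and $\mi{q}$ determine each other monotonically): applying Lemma~\ref{liftinglemma} twice turns $\mi{p} \leq \mi{q}$ into $\ma{p} = w_I.\mi{p}.w_J \leq w_I . \mi{q}' \leq \cdots$, and conversely Lemma~\ref{liftinglem2} applied twice turns $\ma{p} \leq \ma{q}$ into an inequality $\mi{p} \leq \mi{q}$ after multiplying on both sides by suitable elements of $W_I$ and $W_J$ and using that $\mi{p}$ is minimal in its coset. (This is exactly the pattern already used in the proof of Lemma~\ref{lem.posetprojection}, so it can be cited or imitated.)

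Next, for the equivalences involving subordinate paths: the implication \eqref{bruhatnonrex2} $\Rightarrow$ \eqref{bruhat1} is immediate since a reduced expression for $q$ exists. The implication \eqref{bruhat1} $\Rightarrow$ \eqref{bruhatmi} follows directly from Theorem~\ref{newbruhatthm}: if $p = \term(p_\bullet)$ for some $p_\bullet \subset I_\bullet$ with $I_\bullet \expr q$, then $p \in \Term(I_\bullet) = \{\leq q\}$, i.e. $p \leq q$. For the converse direction, \eqref{bruhatmi} $\Rightarrow$ \eqref{bruhatnonrex2}: given any expression $I_\bullet \expr q$ (reduced or not), Theorem~\ref{newbruhatthm} again gives $\Term(I_\bullet) = \{\leq q\}$, and since $p \leq q$ we get $p \in \Term(I_\bullet)$, i.e. $p$ is the terminus of some subordinate path. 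This closes the cycle \eqref{bruhatmi} $\Rightarrow$ \eqref{bruhatnonrex2} $\Rightarrow$ \eqref{bruhat1} $\Rightarrow$ \eqref{bruhatmi}, and together with \eqref{bruhatma} $\Leftrightarrow$ \eqref{bruhatmi} the proof is complete.

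The only genuine obstacle I anticipate is making sure Theorem~\ref{newbruhatthm} really applies to \emph{arbitrary} (not necessarily reduced) expressions $I_\bullet \expr q$, as \eqref{bruhatnonrex2} demands; re-reading its statement, it is phrased for any $(I,J)$-expression $p \expr I_\bullet$, so this is fine — but I would double-check that the induction in its proof never secretly used reducedness. Assuming it does not, the proof of Theorem~\ref{thm:bruhat} is a short formal argument: a ring of four implications plus the classical $\ma{p} \leq \ma{q} \Leftrightarrow \mi{p} \leq \mi{q}$ equivalence. I would write it as: ``Conditions \eqref{bruhatma} and \eqref{bruhatmi} are equivalent by [standard fact / the lifting lemmas above]. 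By Theorem~\ref{newbruhatthm}, for any expression $I_\bullet \expr q$ we have $\Term(I_\bullet) = \{\leq q\}$; this immediately gives \eqref{bruhatmi} $\Rightarrow$ \eqref{bruhatnonrex2} $\Rightarrow$ \eqref{bruhat1} $\Rightarrow$ \eqref{bruhatmi}.''
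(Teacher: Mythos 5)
Your proposal is correct and follows essentially the same route as the paper: the path conditions \eqref{bruhat1} and \eqref{bruhatnonrex2} are related to \eqref{bruhatmi} by citing Theorem~\ref{newbruhatthm} (which, as you checked, holds for non-reduced expressions), and \eqref{bruhatma}$\Leftrightarrow$\eqref{bruhatmi} is handled via the lifting lemmas---the paper does this slightly more directly by quoting Lemma~\ref{lem.posetprojection} in the special case $\emptyset\subset I$, $\emptyset\subset J$ rather than re-running its argument. One small caveat: your aside ``$\ma{q}=w_I.\mi{q}.w_J$'' is not a reduced product in general (it should be the $*$-product $w_I*\mi{q}*w_J$, or one writes $\ma{q}=a.\mi{q}.b$ for suitable $a\in W_I$, $b\in W_J$), but since your actual argument defers to the pattern of Lemma~\ref{lem.posetprojection}, this does not affect its validity.
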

\begin{proof}
    The equivalence between \eqref{bruhatma} and \eqref{bruhatmi} is a special case of \Cref{lem.posetprojection} with the finitary subsets $\emptyset\subset I,\emptyset\subset J$. In fact, if $p\leq q$ then $\ma{p}\leq x$ for some $x\in q$ so $\ma{p}\leq x\leq \ma{q}$ holds.

    The conditions \eqref{bruhat1} and \eqref{bruhatnonrex2} are equivalent to \eqref{bruhatmi} by \Cref{newbruhatthm}, since the coset $q$ always has a reduced expression.
\end{proof}    

Thus the Bruhat order on double cosets has an equivalent definition: $p \le q$ if $p$ is the terminus of a path subordinate to some (or any) reduced (or not reduced) expression of $q$.

\subsection{Bruhat order and concatenation}

\begin{prop}\label{prop:bruhatconcatcompat} Suppose that $q$ and $q'$ are $(I,J)$-cosets with $q' \leq q$, and let $p$ be a $(K,I)$-coset and $r$ a $(J,L)$-coset. We have
\begin{equation} p * q' * r \leq p * q * r. \end{equation}
\end{prop}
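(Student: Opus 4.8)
The plan is to reduce the statement to the corresponding fact about the $*$-product on the Coxeter monoid $W$, via the defining property $\ma{p * q * r} = \ma{p} * \ma{q} * \ma{r}$ of the $*$-product on double cosets together with the characterization of the Bruhat order on cosets by maximal elements (Theorem~\ref{thm:bruhat}, equivalence of \eqref{bruhatma} and \eqref{bruhatmi}). Concretely, since $q' \le q$ means $\mi{q'} \le \mi{q}$, Theorem~\ref{thm:bruhat} gives $\ma{q'} \le \ma{q}$ in $W$. It then suffices to prove $\ma{p} * \ma{q'} * \ma{r} \le \ma{p} * \ma{q} * \ma{r}$ in $W$, because this inequality says precisely that $\ma{p * q' * r} \le \ma{p * q * r}$, which by Theorem~\ref{thm:bruhat} (applied to the $(K,L)$-cosets $p * q' * r$ and $p * q * r$) is equivalent to $p * q' * r \le p * q * r$.

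So the whole proposition collapses to the following assertion in the Coxeter monoid: if $a \le b$ in $W$, then $c * a * d \le c * b * d$ for all $c, d \in W$. This is a two-sided version of Lemma~\ref{starprod}. I would deduce it from Lemma~\ref{starprod} in two steps: first, $a \le b$ implies $a * d \le b * d$ by a right-handed analogue of Lemma~\ref{starprod} (which follows by the same induction on $\ell(d)$ using the lifting property, applied on the right; alternatively one can invoke the anti-automorphism $w \mapsto w^{-1}$ of $W$, which is an anti-automorphism of the Coxeter monoid and reverses none of the Bruhat relations, to convert Lemma~\ref{starprod} into its right-sided form). Second, from $a * d \le b * d$ apply Lemma~\ref{starprod} on the left with the element $c$ to get $c * (a * d) \le c * (b * d)$, and associativity of the $*$-product finishes it.

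The only real content is the two-sided monoid inequality, and even that is essentially a restatement of Lemma~\ref{starprod} together with its left-right mirror; there is no genuine obstacle, since all the heavy lifting—the passage between $\mi{}$ and $\ma{}$ descriptions of the coset Bruhat order, and the compatibility $\ma{p*q*r} = \ma{p}*\ma{q}*\ma{r}$—has already been established. If one wants to avoid even mentioning the anti-automorphism, one can simply record the right-sided version of Lemma~\ref{starprod} as a one-line remark (same proof, induct on $\ell(c)$, base case is the right-handed lifting property \cite[Proposition 2.2.7]{BjornerBrenti}) and then chain the two one-sided inequalities. I expect the write-up to be under half a page.
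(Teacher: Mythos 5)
Your proof is correct, but it takes a different route from the paper. The paper proves this proposition as a direct application of its path machinery: it picks expressions $P_\bullet, Q_\bullet, R_\bullet$, takes a path subordinate to $Q_\bullet$ with terminus $q'$, concatenates it with the forward paths of $P_\bullet$ and $R_\bullet$ (Definition~\ref{def.concatpath} and Lemma~\ref{lem.pqsubPQ}) to get a path subordinate to $P_\bullet\circ Q_\bullet\circ R_\bullet$ with terminus $p*q'*r$, and then invokes the subordinate-path characterization of the Bruhat order in Theorem~\ref{thm:bruhat}. Your argument instead passes entirely through maximal elements: $q'\le q$ gives $\ma{q'}\le\ma{q}$ by the equivalence \eqref{bruhatma}$\Leftrightarrow$\eqref{bruhatmi} (which rests only on Lemma~\ref{lem.posetprojection}), the coset identity $\ma{p*q*r}=\ma{p}*\ma{q}*\ma{r}$ reduces everything to the two-sided monotonicity $a\le b\Rightarrow c*a*d\le c*b*d$ in the Coxeter monoid, and that follows from Lemma~\ref{starprod} together with its left-handed mirror (same induction on length, or conjugation by the inversion anti-automorphism, which preserves Bruhat order). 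This is sound, and arguably more elementary: it bypasses subordinate paths and Theorem~\ref{newbruhatthm} altogether, showing the weak inequality needs only the easy part of Theorem~\ref{thm:bruhat}; what it does not do is showcase the concatenation-of-paths mechanism, which is the point the paper is illustrating and which is reused elsewhere (e.g.\ implicitly in the strict version, Theorem~\ref{thm:bruhatconcatcompat}). One small labeling slip: the statement $a\le b\Rightarrow a*d\le b*d$ is Lemma~\ref{starprod} itself, not its ``right-handed analogue''; the analogue you need to record is the left-multiplication version $a\le b\Rightarrow c*a\le c*b$. The mathematics is unaffected.
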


\begin{proof} Pick expressions $P_{\bullet}$ and $Q_{\bullet}$ and $R_{\bullet}$ for $p$, $q$, and $r$ respectively. Then $P_{\bullet} \circ Q_{\bullet} \circ R_{\bullet}$ is an expression for $p*q*r$.  Let 
$q'_{\bullet} $ be a path subordinate to $Q_{\bullet}$ with terminus $q'$. Let $p_{\bullet}$ and $r_{\bullet}$ be the forward paths of $P_{\bullet}$ and $R_{\bullet}$.
The concatenation $p_{\bullet} \circ q'_{\bullet} \circ r_{\bullet}$ is a path subordinate to $P_{\bullet} \circ Q_{\bullet} \circ R_{\bullet}$ whose terminus is $p * q' * r$. Now the result follows from Theorem~\ref{thm:bruhat}. \end{proof}

We cannot replace `$\leq$' by  `$<$' in Proposition~\ref{prop:bruhatconcatcompat}, since the $*$-product 
with a double coset is not invertible:

\begin{ex}\label{ex.not<}
Let $r=W_I W$ be the unique $(I,S)$-coset and $p,q',q$ be as in Proposition~\ref{prop:bruhatconcatcompat}. Then we have $p*q'*r=p*q*r$ even if $q'<q$. 
\end{ex}

A strict version of Proposition~\ref{prop:bruhatconcatcompat} holds when the composition $p*q*r$ is reduced in the following sense.
Recall first that for $x, y \in W$ we write $xy = x.y$ if $\ell(x) + \ell(y) = xy$,  and we call the composition \emph{reduced}. 

\begin{notation} \label{dotforcosets} 
For an 
$(I,J)$-coset $p$, a $(J,K)$-coset $q$, and a $(I,K)$-coset $r$, let us write $p . q = r$ if there exist $I_{\bullet}$ and $K_{\bullet}$ such that
\[ I_{\bullet} \expr p, \quad K_{\bullet} \expr q, \quad I_{\bullet} \circ K_{\bullet} \expr r\]
are all reduced expressions. We say that $r$ is a \emph{reduced composition} of $p$ and $q$.  By \cite[Proposition 4.3]{EKo}, we have $p . q = r$ if and only if $\ma{r} = \ma{p}. (w_J^{-1} \ma{q}) = (\ma{p} w_J^{-1}) . \ma{q}$.
\end{notation}

\begin{thm}  \label{thm:bruhatconcatcompat} Suppose that $q$ and $q'$ are $(I,J)$-cosets with $q' < q$, and let $p$ be a $(K,I)$-coset and $r$ a $(J,L)$-coset. If $p . q . r$ is a reduced composition, then 
\begin{equation} p * q' * r < p . q . r. \end{equation}
\end{thm}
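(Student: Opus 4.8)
The plan is to combine Proposition~\ref{prop:bruhatconcatcompat}, which already gives the non-strict inequality $p * q' * r \le p . q . r$ (noting $p.q.r = p*q*r$ since the composition is reduced), with a separate argument that rules out equality. So the real work is to show $p * q' * r \ne p . q . r$ under the reducedness hypothesis.

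To establish strictness, I would pass to maximal elements and use the $*$-product description from Notation~\ref{dotforcosets}. Since $p.q.r$ is a reduced composition, $\ma{p.q.r} = \ma{p} . (w_I^{-1} \ma{q}) . (w_J^{-1} \ma{r})$, with lengths adding up (after the appropriate bookkeeping with $\ell(I_0)$, $\ell(I)$, $\ell(J)$ as in the length formula $\ell(\cdot) = 2\ell_W(\ma{\cdot}) - \ell(\text{domain}) - \ell(\text{codomain})$). Meanwhile $\ma{p * q' * r} = \ma{p} * \ma{q'} * \ma{r}$ by the definition of the $*$-product on cosets. By Lemma~\ref{starprod} (monotonicity of $*$) together with $\ma{q'} < \ma{q}$ (Theorem~\ref{thm:bruhat}, equivalence of \eqref{bruhatma} and \eqref{bruhatmi}), we get $\ma{p}*\ma{q'}*\ma{r} \le \ma{p}*\ma{q}*\ma{r} = \ma{p.q.r}$. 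It remains to show this is a strict inequality in $W$; equivalently, by the equivalence of \eqref{bruhatma} and \eqref{bruhatmi} in Theorem~\ref{thm:bruhat}, that $\ma{p*q'*r} \ne \ma{p*q*r}$. The idea is that the reducedness of $p.q.r$ lets us "cancel" $p$ on the left and $r$ on the right at the level of $W$-elements: I would invoke the lifting-type lemmas (Lemma~\ref{liftinglem2} in particular, applied on both sides) to peel off the contributions of $p$ and $r$ and reduce to comparing the "$q$-part" against the "$q'$-part", where strictness follows from $q' < q$.

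Concretely, here is the cleaner route I would actually pursue. Write $w = \ma{p.q.r}$ and use the reduced factorization $w = a . b . c$ where $a = \ma{p}$, $b = w_I^{-1}\ma{q}$, and $c = w_J^{-1}\ma{r}$, with all products reduced. Suppose for contradiction that $\ma{p}*\ma{q'}*\ma{r} = w$. Since $\ma{q'} \le \ma{q}$, write $\ma{q'} = $ the maximal element of the coset $q'$, which lies in the same $(W_I, W_J)$-coset structure; the key point is that $\ma{q'} < \ma{q}$ strictly as elements of $W$ (as $q' < q$ as cosets forces $\ma{q'}<\ma{q}$). Then $\ma{p}*\ma{q'}*\ma{r} \le \ma{p}*\ma{q'}*w_J*\ma{r}$-type manipulations, combined with the fact that in a reduced product $a.b.c$ any element equal to $a*b'*c$ with $b' \le w_I b w_J$ must have $b' $ "as large as possible", i.e. $b' = w_I b w_J = \ma{q}$. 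Making this precise is where I expect Lemma~\ref{liftinglem2} (and its left/right variants) to do the heavy lifting: applying it to strip $a$ on the left and $c$ on the right of the putative equality $a * \ma{q'} * c = a.b.c$ should force $\ma{q'}$ to contain $\ma{q}$ in its coset span, hence $\ma{q'} = \ma{q}$, contradicting $q' < q$.

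The main obstacle is precisely this cancellation step: $*$-multiplication is genuinely non-invertible (as Example~\ref{ex.not<} emphasizes), so one cannot simply "divide" by $\ma{p}$ and $\ma{r}$. The reducedness hypothesis is exactly what saves us — it should be unpacked via Notation~\ref{dotforcosets} into the statement that $\ma{p}$ and $w_I^{-1}\ma{q}$ (resp. $w_J^{-1}\ma{r}$) multiply with no length loss — but turning "no length loss" into a genuine left/right cancellation for the $*$-product requires care. I anticipate needing a small auxiliary lemma of the form: if $x.y$ is reduced and $x * z = x . y$ for some $z$ with $\ell(z) \le \ell(y)$, then $z = y$ (or its coset analogue), proved by the subexpression characterization of the Bruhat order plus induction on $\ell(x)$. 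With such a lemma in hand on both sides, strictness follows immediately; without it, the argument does not close. An alternative fallback, if the cancellation lemma proves awkward, is to argue directly with lengths: show $\ell(p*q'*r) < \ell(p.q.r)$ by bounding $\ell_W(\ma{p*q'*r}) \le \ell_W(\ma{p}) + \ell_W(w_I^{-1}\ma{q'}) + \ell_W(w_J^{-1}\ma{r}) < \ell_W(\ma{p}) + \ell_W(w_I^{-1}\ma{q}) + \ell_W(w_J^{-1}\ma{r}) = \ell_W(w)$, using $\ma{q'} < \ma{q}$ for the strict middle inequality and subadditivity of $\ell_W$ under $*$ for the first; since $p*q'*r \le p.q.r$ already, a strictly smaller length forces strict inequality of cosets.
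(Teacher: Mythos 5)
Your fallback argument is exactly the paper's proof: Proposition~\ref{prop:bruhatconcatcompat} gives $p*q'*r \le p.q.r$, and the length estimate $\ell(\ma{p*q'*r}) \le \ell(\ma{p}) + \ell(\ma{q'}) + \ell(\ma{r}) - \ell(I) - \ell(J) < \ell(\ma{p}) + \ell(\ma{q}) + \ell(\ma{r}) - \ell(I) - \ell(J) = \ell(\ma{p.q.r})$ (using $\ma{q'} < \ma{q}$ and the reducedness of $p.q.r$) rules out equality. Your primary ``cancellation'' route is, as you yourself note, incomplete without an auxiliary lemma, but since the length comparison is complete and is precisely what the paper does, the proposal stands.
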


\begin{proof} We have $p * q' * r \leq p . q . r$ by Proposition~\ref{prop:bruhatconcatcompat}. 
Thus the claim follows from
\begin{align*}
    \ell( \ma{p * q' * r}) &\leq \ell(\ma{p})-\ell(J)+\ell(\ma{q'})-\ell(I)+\ell(\ma{r})\\&  <\ell(\ma{p})-\ell(J)+\ell(\ma{q})-\ell(I)+\ell(\ma{r}) =\ell(\ma{p . q . r}).\qedhere
\end{align*}
\end{proof}

\subsection{Bruhat order, length, and reduced expressions}
We wish to record some easy consequences of the results above about Bruhat order, for ease of future use.

\begin{lem} The Bruhat order on double cosets respects length: if $q \le p$ then $\ell(q) \le \ell(p)$, with equality if and only if $q = p$. \end{lem}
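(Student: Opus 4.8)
The plan is to reduce everything to the known length function formula $\ell(q) = 2\ell_W(\ma{q}) - \ell(I) - \ell(J)$ recorded in the excerpt. Since $I$ and $J$ are fixed (both $p$ and $q$ are $(I,J)$-cosets — or at least, the statement should be read with that hypothesis, as in Theorem~\ref{thm:bruhat}), the quantities $\ell(I)$ and $\ell(J)$ are constants common to both $\ell(p)$ and $\ell(q)$. Thus the inequality $\ell(q) \le \ell(p)$ is equivalent to $\ell_W(\ma{q}) \le \ell_W(\ma{p})$, and equality of lengths is equivalent to $\ell_W(\ma{q}) = \ell_W(\ma{p})$.

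First I would invoke Theorem~\ref{thm:bruhat}: $q \le p$ is equivalent to $\ma{q} \le \ma{p}$ in the ordinary Bruhat order on $W$. It is a standard fact about the classical Bruhat order that $x \le y$ implies $\ell_W(x) \le \ell_W(y)$, with equality if and only if $x = y$ (this follows immediately from the subexpression characterization, or from \cite[Proposition 2.2.7]{BjornerBrenti} and induction). Applying this with $x = \ma{q}$ and $y = \ma{p}$ gives $\ell_W(\ma{q}) \le \ell_W(\ma{p})$, hence $\ell(q) \le \ell(p)$ by the formula above. Moreover equality $\ell(q) = \ell(p)$ forces $\ell_W(\ma{q}) = \ell_W(\ma{p})$, hence $\ma{q} = \ma{p}$; but two $(I,J)$-cosets with the same maximal element coincide (the coset is determined by $\ma{q}$ together with the pair $(I,J)$), so $q = p$. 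Conversely, if $q = p$ then trivially $\ell(q) = \ell(p)$.

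I do not expect a genuine obstacle here; this lemma is purely bookkeeping once Theorem~\ref{thm:bruhat} is in hand. The only point requiring a little care is being explicit that the length formula $\ell(q) = 2\ell_W(\ma{q}) - \ell(I) - \ell(J)$ makes the constants $\ell(I), \ell(J)$ drop out of any comparison, and that a $(I,J)$-coset is determined by its maximal element — both of which are already implicit in the \cite{EKo} setup recalled in the excerpt. An alternative, slightly more self-contained route would be to use the combinatorial/expression definition of length directly: pick a reduced expression $I_\bullet \expr p$; then by Theorem~\ref{thm:bruhat} any $q \le p$ is the terminus of a subordinate path $q_\bullet \subset I_\bullet$, and one checks that the length can only drop at the "down" steps where the path chooses a smaller sub-coset, strictly dropping whenever the chosen sub-coset is proper. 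Either way the argument is short; I would go with the first (via $\ma{p}, \ma{q}$ and the classical fact), as it is the cleanest.
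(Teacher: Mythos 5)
Your proof is correct and follows essentially the same route as the paper: both reduce the statement via the formula $\ell(q) = 2\ell_W(\ma{q}) - \ell(I) - \ell(J)$ to the comparison $\ma{q} \le \ma{p}$ (Theorem~\ref{thm:bruhat}) and then apply the classical fact that the ordinary Bruhat order respects length with equality only for equal elements. The extra remark that an $(I,J)$-coset is determined by its maximal element is exactly the point the paper uses implicitly, so there is no gap.
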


\begin{proof} Let $p$ be an $(I,J)$-coset and recall that $\ell(p) = 2 \ell(\ma{p}) - \ell(I) - \ell(J)$. If $q$ is an $(I,J)$-coset with $q \le p$ then $\ma{q} \le \ma{p}$, with equality if and only if $q = p$. Now the result follows from the corresponding fact for the ordinary Bruhat order: $\ell(\ma{q}) \le \ell(\ma{p})$ with equality if and only if $\ma{q} = \ma{p}$. \end{proof}

Below we use 
that contiguous subexpressions of reduced expressions are reduced, see \cite[Proposition 3.12]{EKo}.

\begin{lem} Let $I_{\bullet} = [I_0, \ldots, I_d]$ be a reduced expression for $p$. Then it has a unique subordinate path with terminus $p$, namely the forward path. \end{lem}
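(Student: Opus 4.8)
The plan is to prove the uniqueness assertion by a length count along paths. Recall from Definition~\ref{defn:pathsubord} that the forward path $p_\bullet = [p_0, \ldots, p_d]$ of $I_\bullet$ satisfies $I_\bullet \expr \term(p_\bullet)$, so that $\term(p_\bullet) = p$; thus the forward path is one subordinate path with terminus $p$, and what must be shown is that any subordinate path $q_\bullet = [q_0, \ldots, q_d]$ with $\term(q_\bullet) = p$ coincides with $p_\bullet$. Throughout, $q_k$ is an $(I_0, I_k)$-coset, $q_0 = q_{\id}$ has length $0$, and $\ell(q_k) = 2\ell(\ma{q_k}) - \ell(I_0) - \ell(I_k)$.

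First I would bound the length increments $\ell(q_k) - \ell(q_{k-1})$. At a down-step $I_{k-1} \supset I_k$ we have $q_k \subset q_{k-1}$, hence $\ma{q_k} \le \ma{q_{k-1}}$, and the length formula gives $\ell(q_k) \le \ell(q_{k-1}) + \ell(I_{k-1}) - \ell(I_k)$, with equality if and only if $\ma{q_k} = \ma{q_{k-1}}$; and $\ma{q_k} = \ma{q_{k-1}}$ characterizes the forward choice, which is the unique $(I_0, I_k)$-coset inside $q_{k-1}$ whose maximal element is $\ma{q_{k-1}}$. At an up-step $I_{k-1} \subset I_k$ there is no choice for $q_k$, and to bound its length I would fix a reduced expression $M_\bullet \expr q_{k-1}$; then $M_\bullet \circ [I_{k-1}, I_k]$ is an expression for $q_k = q_{k-1} * (W_{I_{k-1}} W_{I_k})$ whose multistep length is $\ell(M_\bullet) + \ell(I_k) - \ell(I_{k-1}) = \ell(q_{k-1}) + \ell(I_k) - \ell(I_{k-1})$, so that $\ell(q_k) \le \ell(q_{k-1}) + \ell(I_k) - \ell(I_{k-1})$ since the length of a coset is minimal among the lengths of its expressions \cite{EKo}.

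Summing these bounds from $\ell(q_0) = 0$ gives $\ell(\term(q_\bullet)) \le \sum_{k=1}^{d} |\ell(I_k) - \ell(I_{k-1})|$, and a direct telescoping comparison with the definition of the multistep length shows that $\sum_{k=1}^{d}|\ell(I_k)-\ell(I_{k-1})| = \ell(L_\bullet)$, where $L_\bullet$ is the multistep expression associated to $I_\bullet$. Since $I_\bullet$ is reduced, $\ell(L_\bullet) = \ell(p)$, so $\ell(\term(q_\bullet)) \le \ell(p)$ for every subordinate path. If now $\term(q_\bullet) = p$, then $\ell(\term(q_\bullet)) = \ell(p)$, forcing every increment bound above to be an equality; in particular every down-step of $q_\bullet$ is the forward choice. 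A straightforward induction on $k$ then yields $q_k = p_k$ for all $k$: it holds for $k = 0$; at an up-step both $q_k$ and $p_k$ are the unique $(I_0, I_k)$-coset containing $q_{k-1} = p_{k-1}$; and at a down-step $q_k$ is the forward choice out of $q_{k-1} = p_{k-1}$, which is precisely $p_k$. Hence $q_\bullet = p_\bullet$.

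The routine parts are the telescoping identity $\sum_k |\ell(I_k) - \ell(I_{k-1})| = \ell(L_\bullet)$, immediate from the formula defining $\ell(L_\bullet)$, and the elementary fact that appending one subset to a singlestep expression increases its length by exactly $|\ell(I_k) - \ell(I_{k-1})|$. The one ingredient that carries the weight of the argument, and the only place reducedness of $I_\bullet$ enters, is the equality $\ell(L_\bullet) = \ell(p)$ combined with the up-step bound: together they say that a subordinate path can reach the top coset $p$ only by being as efficient as the forward path at every single step. I do not anticipate a genuine obstacle beyond keeping the length bookkeeping straight.
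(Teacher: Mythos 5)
Your proof is correct, but it takes a genuinely different route from the paper's. The paper argues by induction on the width $d$: given another subordinate path $t_\bullet$ with terminus $p$, it invokes Theorem~\ref{newbruhatthm} to get $t_{d-1}\le p_{d-1}$, observes that $t_{d-1}<p_{d-1}$ could only be compatible with $t_d=p$ if $I_{d-1}\subset I_d$ and $t_{d-1}\subset p$, and derives a contradiction from $\mi{p_{d-1}}=\mi{p_d}$; reducedness of the contiguous prefix $[I_0,\ldots,I_{d-1}]$ then feeds the induction hypothesis. You instead run a length count along the whole path: using $\ell(q)=2\ell_W(\ma{q})-\ell(I_0)-\ell(I_k)$ you bound each increment $\ell(q_k)-\ell(q_{k-1})$ by $|\ell(I_k)-\ell(I_{k-1})|$, note that at a down-step equality holds exactly for the forward (maximum-preserving) choice, and conclude that a terminus of length $\ell(p)=\ell(I_\bullet)$ forces the forward choice at every down-step. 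Both arguments are sound; your equality analysis and the identification of the forward choice as the unique subcoset with the same maximal element are correct, and the one external input you use -- that $\ell$ of a coset is at most the length of any expression for it -- is available in \cite{EKo} and can also be checked directly from $\ma{q_k}=\ma{q_{k-1}}*w_{I_k}$ and subadditivity of length under $*$. What each approach buys: the paper's induction is short because it leans on machinery already established (Theorem~\ref{newbruhatthm} and reducedness of contiguous subexpressions of reduced expressions), while yours is more self-contained, needing neither of those, and as a byproduct shows that every terminus of a path subordinate to $I_\bullet$ has length at most $\ell(I_\bullet)$, with equality only for the forward path; the price is a bit more length bookkeeping.
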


\begin{proof}
    We prove the claim by induction on the width $d$ of a reduced expression $p\expr [I_0,\ldots,I_d]$, where the base case is trivial.
    Let $p_\bullet$ be the forward path and let $t_\bullet$ be another subordinate path with terminus $p$. 
    If $t_{d-1} = p_{d-1}$ then, since $[I_0, \ldots, I_{d-1}]$ is reduced, induction implies that $t_{\le d-1} = p_{\le d-1}$. Combined with $t_d = p_d = p$, we have $t_{\bullet} = p_{\bullet}$.
    
    If $t_{d-1}\neq p_{d-1}$ then $t_{d-1}<p_{d-1}$ (see Theorem~\ref{newbruhatthm}). For $t_d=p_d$ to be the case the only possibility is $I_{d-1} \subset I_d$ and $t_{d-1}\subset t_d=p_d = p$. But since $p_\bullet$ is reduced we have $\mi{p_{d-1}}=\mi{p_d}$ which contradicts $\mi{t_{d-1}}<\mi{p_{d-1}}$. Thus $t_{d-1} = p_{d-1}$.
\end{proof}

\section{Ideals of lower terms in the singular Hecke 2-category} \label{sec:lower}

The singular Hecke 2-category is a categorification of the Hecke (or Schur) algebroid (see \cite[Definition 2.7]{SingSb}). Like most categorifications, it has several incarnations, all of which are isomorphic in non-degenerate characteristic zero situations, but which may differ in general. Currently, in the literature, there is a geometric incarnation (using perverse sheaves on partial flag varieties, equivariant under parabolic subgroups of a Lie group, see \cite[p8-10]{WThesis}), and an algebraic incarnation using singular Soergel bimodules (which are direct summands of singular Bott-Samelson bimodules, to be defined below). The latter is the topic of Williamson's PhD thesis \cite{WThesis}, which also appears in a shortened article version \cite{SingSb}.

One can also expect a diagrammatic incarnation by generators and relations, following the rubric set out in \cite{ESW}, see also \cite[Chapter 24]{GBM}. The diagrammatic version has not been fully developed. In the interest in developing it further, it is important to establish combinatorially some basic facts about morphisms in the 2-category. In this paper, we address the ideal of lower terms.

We fix a Coxeter system $(W,S)$ and a realization $V$ over a ground field $\Bbbk$ thereof \cite[Section 3.1]{Soergelcalculus}. We consider the polynomial ring $R$ of this realization, and for each finitary subset $I \subset S$, the subring $R^I$ of $W_I$-invariants in $R$. We require that our realization is reflection faithful, balanced and satisfies generalized Demazure surjectivity (cf. \cite[\S 3.1]{EKLP1}). The ring $R$ is graded, and all $R^I$-modules 
are graded.
The background on this material in \cite[Chapter 3.1]{EKLP1} should be sufficient.

 In \S\ref{SSBimBackground} we provide background on singular Bott--Samelson and singular Soergel bimodules. In \S\ref{ssec:lowertermsdefined} we define the ideal of lower terms using factorization, and in \S\ref{ss.locality} we prove our result on the compatibility of ideals and the monoidal structure. These results rely upon \Cref{ass}, which describes certain properties of singular Soergel bimodules that we prove in \S\ref{ss.character}. However, the properties of \Cref{ass} can be viewed as a black box for the purposes of our monoidal compatibility result. Eventually, these properties shall be proven separately for the diagrammatic category, at which point we can adapt some of our results to that context.

Starting in \S\ref{ss.character} we focus on singular Soergel bimodules in their algebraic incarnation. Keep in mind that there are two different kinds of filtrations at play: a filtration (by bimodules) of the bimodules themselves, and a filtration (by ideals) on morphisms between bimodules. Both of these filtrations will be given two definitions, one involving support, and one involving factorization. First we establish the equivalence of these definitions for bimodule filtrations, and then we translate the result to morphism spaces.

In \S\ref{ss.character} we recall facts about $\nabla$-filtrations and support filtrations on singular Soergel bimodules from \cite[\S 6.1]{SingSb}, which we use to prove \Cref{ass}. In \S\ref{ss.supportvsfactor} we state the equivalence to a factorization filtration. We prove this result in \S\ref{ss.restrictionfun}, by translating the problem to the setting of ordinary Soergel bimodules.

The crucial tool we use to transfer results about bimodule filtrations to results about morphism filtrations is the existence of resolutions of $\nabla$-filtered modules by certain Soergel bimodules. We prove this in \S\ref{ss.nowformorphisms} using a horseshoe-lemma-style argument. Once this is done, we have a filtration of the ordinary Hecke category by ideals, defined equivalently using support conditions or factorization conditions.

Finally, in \S\ref{ss.application2} we translate this result to the singular Hecke category. Then we use these techniques to analyze what happens to the one-tensor, the minimal degree generator of a Bott--Samelson bimodule.

\subsection{Singular Bott-Samelson bimodules} \label{SSBimBackground}

\begin{defn} If $I\subset J$ then we define the graded  $(R^I, R^J)$-bimodule 
$$\BS([I,J]) = R^I, $$
and the graded $(R^J, R^I)$-bimodule 
$$\BS([J,I]) = R^I(\ell(J) - \ell(I)).  $$ 
By convention, the grading shift by the positive integer $\ell(J) - \ell(I)$ places $1 \in R^I$ in negative degree $\ell(I) - \ell(J)$. \end{defn}

\begin{defn} We define the \emph{singular Bott--Samelson bimodule} $\BS(I_{\bullet})$ associated to a singular expression $I_{\bullet} = [I_0, \ldots, I_d]$ as the graded $(R^{I_0}, R^{I_d})$-bimodule
\begin{equation} \BS(I_{\bullet}) = \BS([I_0,I_1]) \ot_{R^{I_1}} \BS([I_1, I_2]) \ot_{R^{I_2}} \cdots \ot_{R^{I_{d-1}}} \BS([I_{d-1},I_d]). \end{equation}
\end{defn}

We denote by $\onetensor_{L_\bullet}$ (or simply by $\onetensor$) the element \[\onetensor_{L_\bullet} :=1\otimes 1 \otimes \ldots \otimes 1\in \BS(L_\bullet).\]


We work in the bicategory of graded bimodules, looking at $(R^I, R^J)$-bimodules for various finitary $I, J \subset S$. To be more precise, denote by $\Bim$ the bicategory defined as follows. The objects in $\Bim$ are the finitary subsets $I\subset S$, identified with the graded algebras $R^I$. The category $\Bim(J,I)$ of 1-morphisms between $I,J$ is the category of graded $(R^I,R^J)$-bimodules. The composition of $1$-morphisms $\Bim(J,I)\times \Bim(K,J)\to \Bim(K,I)$ is given by the tensor product over $R^J$.
Given $M,N\in\Bim(J,I)$, the morphism space is the graded $(R^I,R^J)$-bimodule, 
\begin{equation}\label{eq.Hom}
    \Hom(M,N):=\bigoplus_{i\in \mathbb Z}\Hom^i(M,N), \qquad \Hom^i(M,N):=\Hom^0(M,N(i)).   
    \end{equation}
    Here $\Hom^0(M,N(i))$ denotes the space of degree zero $(R^I,R^J)$-bimodule maps from $M$ to $N(i)$. 
    
Bott--Samelson bimodules form a subbicategory $\SBSBim$, where $\SBSBim(J,I)$ is the category of Bott--Samelson bimodules associated to $(I,J)$-expressions.

The assumptions about Bott-Samelson bimodules we need fit into the following black box.

\begin{prop}\label{ass} 
For each $(I,J)$-coset $p$ there exists an indecomposable graded $(R^I, R^J)$-bimodule $B_p$, which is a direct summand with multiplicity one of $\BS(I_{\bullet})$ for any reduced expression $I_{\bullet} \expr p$, but not (isomorphic to) a direct summand of $\BS(I'_{\bullet})$ whenever $\ell(I'_{\bullet}) < \ell(p)$.
Moreover, the summand $B_p$ contains the element $\onetensor_{I_\bullet}$.
Every indecomposable summand of a singular Bott-Samelson bimodule is isomorphic to some $B_p$, up to grading shift. Finally, if $I_{\bullet}$ is an expression (not necessarily reduced) for a coset $p$, then every summand of $\BS(I_{\bullet})$ is isomorphic to a grading shift of $B_q$ for some $q \le p$. 
\end{prop}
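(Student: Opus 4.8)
The plan is to reduce, using the parts of the proposition already stated, to a combinatorial statement about the Bruhat order together with one genuinely bimodule-theoretic fact about restriction of scalars, and then to close an induction with Proposition~\ref{prop:bruhatconcatcompat}. Since every indecomposable summand of a singular Bott--Samelson bimodule is a grading shift of some $B_q$, it suffices to prove that if $B_q$ is (a shift of) a summand of $\BS(I_\bullet)$ with $I_\bullet \expr p$, then $q \le p$; by Theorem~\ref{thm:bruhat} this is equivalent to $\ma q \le \ma p$, or to $q \in \Term(I_\bullet)$. I would induct on the width $d$ of $I_\bullet = [I_0,\dots,I_d]$. The base case $d=0$ is trivial, since $\BS([I_0]) = R^{I_0} = B_{q_{\id}}$ and $[I_0] \expr q_{\id}$.

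For the inductive step, factor $I_\bullet = I'_\bullet \circ [I_{d-1}, I_d]$ with $I'_\bullet = [I_0,\dots,I_{d-1}] \expr p'$, so $\BS(I_\bullet) = \BS(I'_\bullet) \otimes_{R^{I_{d-1}}} \BS([I_{d-1},I_d])$. By induction every indecomposable summand of $\BS(I'_\bullet)$ is a shift of some $B_{q'}$ with $q' \le p'$; hence, by Krull--Schmidt, every indecomposable summand of $\BS(I_\bullet)$ is, up to a shift, an indecomposable summand of $B_{q'} \otimes_{R^{I_{d-1}}} \BS([I_{d-1},I_d])$ for some such $q'$. Now $\BS([J,Js]) = R^J$ and $\BS([J,J\setminus t]) = R^{J\setminus t}(\ell(J)-\ell(J\setminus t))$, so up to a shift this tensor product is the restriction of scalars of $B_{q'}$ along $R^{I_d} \hookrightarrow R^{I_{d-1}}$ when $I_{d-1} \subset I_d$, and the extension of scalars of $B_{q'}$ along $R^{I_{d-1}} \hookrightarrow R^{I_d}$ when $I_d \subset I_{d-1}$. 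So the matter comes down to the single-step claim: \emph{$(\star)$ writing $c$ for the $(I_{d-1},I_d)$-coset containing the identity, restriction, resp.\ extension, of scalars of an indecomposable $B_{q'}$ along the step $[I_{d-1},I_d]$ is a direct sum of grading shifts of bimodules $B_s$ with $s \le q'*c$}. Here $q'*c$ equals $\pi(q')$ for $\pi\colon W_{I_0}\backslash W/W_{I_{d-1}} \to W_{I_0}\backslash W/W_{I_d}$ in the up case, and the largest $(I_0,I_d)$-coset lying over $q'$ in the down case --- exactly the two situations of Lemma~\ref{lem.posetprojection}. Granting $(\star)$, every summand $B_s$ appearing satisfies $s \le q'*c \le p'*c = p$, where the middle inequality is Proposition~\ref{prop:bruhatconcatcompat} applied to $q' \le p'$, and the equality holds because $\ma{p'*c} = \ma{p'}*\ma c = w_{I_0}*w_{I_1}*\cdots*w_{I_d} = \ma p$. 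This closes the induction.

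I expect $(\star)$ to be the crux, and the only step not accessible by pure combinatorics --- indeed the earlier parts of the proposition together with Section~\ref{ss:bruhat} yield only $\ell(s) \le \ell(q'*c)$, which is strictly weaker than $s \le q'*c$. One extracts $(\star)$ from the support / $\nabla$-filtration theory of singular Soergel bimodules (Williamson, recalled and extended in \S\ref{ss.character} and \S\ref{ss.restrictionfun}): the support of $B_{q'}$ is governed by the coset $q'$, restriction or extension of scalars along a single parabolic step pushes that support into the region governed by the cosets $\le q'*c$, and Krull--Schmidt together with the support (equivalently $\nabla$-character) classification of the indecomposables $B_s$ then forces $s \le q'*c$. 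Note that, once that theory is available, it also proves the proposition directly: a singular Bott--Samelson bimodule for $p$ is $\nabla$-filtered with subquotients attached to cosets $\le p$, while an indecomposable $B_q$ has $q$ at the top of its own $\nabla$-filtration, so $B_q$ being a summand forces $\ma q \le \ma p$ and hence $q \le p$ by Theorem~\ref{thm:bruhat}. I would nevertheless present the induction above, as it isolates exactly where the non-formal input is used.
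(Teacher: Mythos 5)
Your proposal addresses only the final claim (the first three are quoted from Williamson, as in the paper), and in outline it is correct, but it is organized differently from the paper's argument and its crux is left at sketch level. The paper proves the last claim in one shot at the level of characters: $\ch{\BS(I_\bullet)}$ is the iterated product of the generators ${}^{I_j}H^{I_{j+1}}$, an iterated application of Williamson's one-step multiplication formula \cite[Proposition 2.8]{SingSb} shows this product is a $\Z[v^{\pm 1}]$-combination of standard basis elements ${}^{I}H_q^{J}$ indexed by the termini of paths subordinate to $I_\bullet$, and Theorem~\ref{thm:bruhat} identifies these termini with $\{\le p\}$; since the characters $\ch{B_q}$ are unitriangular against the standard basis and form a basis, the decomposition of $\BS(I_\bullet)$ is read off from its character. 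Your induction on the width, with the single-step claim $(\star)$, essentially re-runs the induction already carried out in Theorem~\ref{newbruhatthm}: the up/down analysis via Lemma~\ref{lem.posetprojection} and the inequality $q'*c\le p'*c$ are exactly what that proof does, so invoking Theorem~\ref{thm:bruhat} directly would spare you the combinatorial half of your argument.

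The one place where your write-up falls short of a proof is $(\star)$ itself: you correctly isolate it as the only non-formal input, but ``restriction or extension of scalars pushes the support into the region of $\{\le q'*c\}$'' must be substantiated, e.g.\ by the compatibility of the support filtration with one-step restriction/induction (in the spirit of Lemma~\ref{ResGamma}) together with the fact that $\nabla_s$ sits at the top of the filtration of $B_s$ (equation~\eqref{eq:chbq}), or --- more efficiently --- by the very character identity \cite[Proposition 2.8]{SingSb} plus triangularity that the paper uses. Once $(\star)$ is proved that way, your induction collapses into the paper's proof; indeed your closing remark (the Bott--Samelson bimodule of $p$ is $\nabla$-filtered by cosets $\le p$, while $B_q$ has $\nabla_q$ on top) \emph{is} the paper's argument, modulo the observation that the cosets occurring in the $\nabla$-filtration of $\BS(I_\bullet)$ are the termini of subordinate paths --- which is precisely where Theorem~\ref{thm:bruhat} enters.
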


\begin{proof}
A proof of the first two claims can be found in \cite[Theorem 7.10]{SingSb}. A proof of the third one can be found in the proof of the same theorem. 
The last claim is proved using results in \cite{SingSb} on the character of Soergel bimodules. We give the latter proof in Section~\ref{ss.character} after discussing necessary definitions.
\end{proof}

The bicategory of singular Soergel bimodules $\SSBim$ is the additive closure of $\SBSBim$ in $\Bim$. Proposition~\ref{ass} says that the indecomposable 1-morphisms in $\SSBim$, up to isomorphism and grading shift, are the bimodules $B_p$ indexed by double cosets $p$.

\begin{cor} \label{cor:factorthroughnonreduced} Let $I_{\bullet}$ be an expression, not necessarily reduced, for a coset $p$. Then every morphism which factors through $\BS(I_{\bullet})$ is a finite sum of morphisms factoring through $\BS(M_{\bullet})$, for various reduced expressions $M_{\bullet} \expr p'$, for various cosets $p' \le p$. \end{cor}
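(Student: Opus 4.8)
The plan is to deduce Corollary~\ref{cor:factorthroughnonreduced} directly from the last sentence of Proposition~\ref{ass}, which is the real content; the corollary is essentially a repackaging at the level of morphism spaces. So first I would invoke the Krull--Schmidt property of the additive category $\SSBim$ (or just of the full subcategory of summands of singular Bott--Samelson bimodules): every singular Bott--Samelson bimodule $\BS(I_\bullet)$ decomposes as a finite direct sum $\bigoplus_i B_{q_i}(n_i)$ of grading shifts of indecomposables, and by the final claim of Proposition~\ref{ass} each $q_i$ satisfies $q_i \le p$.

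Next, given any morphism $f$ that factors through $\BS(I_\bullet)$, say $f = g \circ h$ with $h\co M \to \BS(I_\bullet)$ and $g\co \BS(I_\bullet) \to N$, I would use the chosen direct sum decomposition $\BS(I_\bullet) \cong \bigoplus_i B_{q_i}(n_i)$ to write $f = \sum_i g_i \circ h_i$, where $h_i$ and $g_i$ are the compositions of $h$ and $g$ with the inclusion and projection for the $i$-th summand. Thus $f$ is a finite sum of morphisms, each of which factors through some $B_{q_i}(n_i)$ with $q_i \le p$. Finally, for each such $i$, pick a reduced expression $M_\bullet^{(i)} \expr q_i$; by the first claim of Proposition~\ref{ass}, $B_{q_i}$ is a direct summand of $\BS(M_\bullet^{(i)})$, so a morphism factoring through $B_{q_i}(n_i)$ also factors through $\BS(M_\bullet^{(i)})(n_i)$ (and a grading shift of $\BS(M_\bullet^{(i)})$ is again a singular Bott--Samelson bimodule associated to the same expression, up to the shift convention). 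This yields exactly the asserted conclusion: $f$ is a finite sum of morphisms factoring through $\BS(M_\bullet)$ for various reduced $M_\bullet \expr p'$ with $p' \le p$.

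The only mild subtlety — and the place I would be slightly careful — is bookkeeping around grading shifts: $\Hom$ in $\Bim$ is taken to be the graded Hom (the direct sum over all internal degrees, see \eqref{eq.Hom}), so factoring "through $\BS(M_\bullet)$" should be understood as factoring through some grading shift of it, and this is harmless. I do not expect any genuine obstacle here; the weight of the argument is entirely carried by Proposition~\ref{ass}, whose final claim is itself proved later in Section~\ref{ss.character}. One should simply make sure the decomposition of $\BS(I_\bullet)$ is fixed once and for all so that the splitting $f = \sum_i g_i\circ h_i$ makes sense, which is guaranteed by Krull--Schmidt for finitely generated graded bimodules over the (Noetherian, graded-local-enough) rings $R^I$.
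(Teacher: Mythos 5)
Your argument is correct and is essentially the paper's own proof: decompose $\BS(I_\bullet)$ into summands $B_{q}$ with $q \le p$ via the last claim of Proposition~\ref{ass}, split the morphism accordingly, and then refactor through $\BS(M_\bullet)$ for reduced expressions of the $q$'s using the first claim. The extra remarks on Krull--Schmidt and grading shifts are harmless bookkeeping that the paper leaves implicit.
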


\begin{proof} A morphism factoring through an object can be written as a finite sum of morphisms factoring through its direct summands, and vice versa. Thus a morphism which factors through $\BS(I_{\bullet})$ can be factored instead through $B_{p'}$ for various $p' \le p$ (up to taking finite sums). Morphisms factoring through $B_{p'}$ can be factored instead through $\BS(M_{\bullet})$ for a reduced expression. \end{proof}

\subsection{Lower terms} \label{ssec:lowertermsdefined}

Inside any additive category, given a collection of objects, the set of finite sums of morphisms, each of which factors through one of those objects, forms a two-sided ideal. This is the same as the ideal generated by the identity maps of those objects.

\begin{defn}
Let $C\subset W_I\backslash W/W_J$ be a downward-closed set (i.e., $p\in C$ and $q\leq p$ implies $q\in C$).
Consider the set of expressions $M_\bullet\expr q$ for $q \in C$. 
Let $\Hom_C$ denote the ideal generated by the identity maps of $\BS(M_{\bullet})$ for such expressions. 
This is a two-sided ideal in the category of $(R^I, R^J)$-bimodules.
\end{defn}

The left and right actions of $R^I$ and $R^J$ on $\Hom(B,B')$ are given by 
$(afb)(m)=f(amb)=af(m)b$, where $f\in\Hom(B,B')$, $a\in R^I$, and $b\in R^J$. 
Thus these actions
preserve the factorization of morphisms, i.e., $a(h\circ g)b=ahb\circ g = h\circ agb$ where $h,g$ are composable $(R^I,R^J)$-bimodule morphisms. Consequently, $\Hom_C(B,B') \subset \Hom(B,B')$ is a sub-bimodule. 

 Important special cases include $C = \{\le p\}$ and $C = \{< p\}$.

\begin{notation} Let $p$ be an $(I,J)$-coset. Then $\Hom_{< p}$ is called the \emph{ideal of lower terms} relative to $p$. \end{notation}



\subsection{The locality of lower terms}\label{ss.locality}

Here we prove that the concept of ``lower terms'' is preserved by tensor product with identity maps under reduced compositions of reduced expressions.


\begin{prop} \label{prop:lowertermslocal} Let $P_{\bullet} \expr p$ and $Q_{\bullet} \expr q$ and $R_{\bullet} \expr r$ be reduced expressions such that $P_{\bullet} \circ Q_{\bullet} \circ R_{\bullet} \expr p.q.r$ is reduced. Then
\begin{equation} \id_{\BS(P_{\bullet})} \ot \End_{< q}(\BS(Q_{\bullet})) \ot \id_{\BS(R_{\bullet})} \subset \End_{< p.q.r}(\BS(P_{\bullet} \circ Q_{\bullet} \circ R_{\bullet})). \end{equation} \end{prop}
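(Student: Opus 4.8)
The plan is to reduce the claim to the factorization definition of lower terms and then invoke the combinatorics of the Bruhat order on double cosets, specifically Theorem~\ref{thm:bruhatconcatcompat}. Let me sketch the steps.

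Suppose $f \in \End_{<q}(\BS(Q_\bullet))$. By definition, $f$ is a finite sum of morphisms, each of which factors through $\BS(M_\bullet)$ for some expression $M_\bullet \expr q'$ with $q' < q$ (we may take $M_\bullet$ not necessarily reduced, but by Corollary~\ref{cor:factorthroughnonreduced} we may as well assume it is reduced, or simply work with the general expressions directly). It suffices to treat a single such summand $f = h \circ g$ where $g \co \BS(Q_\bullet) \to \BS(M_\bullet)$ and $h \co \BS(M_\bullet) \to \BS(Q_\bullet)$. Then
\[
\id_{\BS(P_\bullet)} \ot f \ot \id_{\BS(R_\bullet)} = (\id \ot h \ot \id) \circ (\id \ot g \ot \id)
\]
factors through $\BS(P_\bullet) \ot_{R^I} \BS(M_\bullet) \ot_{R^J} \BS(R_\bullet) = \BS(P_\bullet \circ M_\bullet \circ R_\bullet)$. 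So it is enough to show that every summand of $\BS(P_\bullet \circ M_\bullet \circ R_\bullet)$ is of the form $B_{p''}$ (up to grading shift) for some $p'' < p.q.r$; then any identity map of $\BS(P_\bullet \circ M_\bullet \circ R_\bullet)$ — hence any morphism factoring through it — lies in $\End_{<p.q.r}(\BS(P_\bullet \circ Q_\bullet \circ R_\bullet))$ by the definition of $\Hom_C$ with $C = \{< p.q.r\}$.

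Now $P_\bullet \circ M_\bullet \circ R_\bullet$ is an expression (generally not reduced) for the double coset $p * q' * r$, by the remark following Definition~\ref{def.concatpath}. By the last claim of Proposition~\ref{ass}, every summand of $\BS(P_\bullet \circ M_\bullet \circ R_\bullet)$ is isomorphic to a grading shift of $B_{p''}$ for some $p'' \le p * q' * r$. So it remains to check the strict inequality $p * q' * r < p.q.r$. Since $q' < q$ and $p.q.r$ is a reduced composition, this is exactly the content of Theorem~\ref{thm:bruhatconcatcompat}. This finishes the argument: $p'' \le p * q' * r < p.q.r$, so $p'' \in \{< p.q.r\}$, and we are done.

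The main obstacle is really bookkeeping rather than substance: one must be careful that the ideal $\End_{<p.q.r}(\BS(P_\bullet \circ Q_\bullet \circ R_\bullet))$ is defined using \emph{all} expressions $M'_\bullet \expr p''$ for $p'' < p.q.r$ — not just reduced ones, and not just $P_\bullet \circ M_\bullet \circ R_\bullet$ itself — so that a morphism factoring through $\BS(P_\bullet \circ M_\bullet \circ R_\bullet)$, which is built from summands $B_{p''}$ with $p'' < p.q.r$, is genuinely in the ideal. This is handled by the observation (made in the excerpt, just before the Definition of $\Hom_C$) that factoring through an object is the same as factoring through a finite sum of its summands, together with the fact that $B_{p''}$ is a summand of $\BS(M'_\bullet)$ for $M'_\bullet$ a reduced expression of $p''$. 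The only genuinely nontrivial inputs — Theorem~\ref{thm:bruhatconcatcompat} and the last claim of Proposition~\ref{ass} — are already available, so the proof is short.
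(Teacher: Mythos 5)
Your proposal is correct and follows essentially the same route as the paper: tensor the factorization of a summand of $f$ with the identity maps to factor through $\BS(P_\bullet \circ M_\bullet \circ R_\bullet)$, observe this is a (possibly non-reduced) expression for $p*q'*r$, apply Theorem~\ref{thm:bruhatconcatcompat} for the strict inequality $p*q'*r < p.q.r$, and conclude via the last claim of Proposition~\ref{ass} (equivalently Corollary~\ref{cor:factorthroughnonreduced}). Your explicit handling of the finite-sum decomposition and of non-reduced expressions $M_\bullet$ is just a slightly more careful bookkeeping of the same argument.
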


\begin{proof} Let $\phi \in \End_{<q}(\BS(Q_{\bullet}))$. There exists some reduced expression $Q'_{\bullet} \expr q'$ with $q' < q$, such that $\phi$ factors through
$\BS(Q'_{\bullet})$. Then $\id_{\BS(P_{\bullet})} \ot \phi \ot \id_{\BS(R_{\bullet})}$ factors through the Bott--Samelson bimodule for $P_{\bullet} \circ Q'_{\bullet} \circ R_{\bullet}$. The latter need not be a reduced expression, but it is an expression for the double coset $p*q'*r$. By Theorem~\ref{thm:bruhatconcatcompat}, $p*q'*r < p.q.r$. By Corollary~\ref{cor:factorthroughnonreduced}, $\id_{\BS(P_{\bullet})} \ot \phi \ot \id_{\BS(R_{\bullet})}$ is a finite sum of morphisms factoring through  Bott-Samelsons expressing
cosets strictly smaller than $p.q.r$, whence it lives in $\End_{<p.q.r}(\BS(P_{\bullet} \circ Q_{\bullet} \circ R_{\bullet}))$. \end{proof}

\subsection{Standard bimodules and \texorpdfstring{$\nabla$-}{nabla }filtrations}\label{ss.character}

\begin{defn} [{\cite[Definition 4.4]{SingSb}}]
Let $p$ be a $(I,J)$-coset. The \emph{(singular) standard bimodule} $R_p$ is the graded $(R^I, R^J)$-bimodule defined as follows. 
We first define the \emph{left redundancy} $\leftred(p):= I \cap \mi{p}J\mi{p}\inv$ and the \emph{right redundancy} $\rightred(p):= J\cap \mi{p}\inv I \mi{p}$ of $p$. 
As a left $R^I$-module, $R_p$ is $R^{\leftred(p)}$. The right action of $f \in R^J$ on $m \in R_p$ is given by
\begin{equation}\label{mf=pfm} m \cdot f = (\mi{p} f) m.
\end{equation}
In words, we say that the right action is twisted by $\mi{p}$.

\end{defn}

For \eqref{mf=pfm} to make sense, we need $\mi{p} f \in R^{\leftred(p)}$ whenever $f \in R^J$. Note that $f \in R^J$ implies that $\mi{p} f$ is invariant under ${\mi{p} J \mi{p}^{-1}}$. By definition, $\leftred(p) \subset \mi{p} J \mi{p}^{-1}$. That \eqref{mf=pfm} is associative follows from $\mi{p}(f_1f_2)= (\mi{p}f_1)(\mi{p}f_2)$.

Equivalently, we can define $R_p$ to be $R^{\rightred(p)}$ as a right $R^J$-module. Then the left action of $g \in R^I$ on $n \in R^{\rightred(p)}$ is given by
\begin{equation} g \cdot n = n (\mi{p}^{-1} g). \end{equation}
These two descriptions are intertwined by the inverse isomorphisms between $R^{\leftred(p)}$ and $R^{\rightred(p)}$ given by 
\begin{equation} m \mapsto \mi{p}^{-1} m, \qquad n \mapsto \mi{p} n. \end{equation}

\begin{ex} For $s \ne t \in S$, let $p$ be the $(s,t)$-coset containing the identity. Then $R_p = R$ as a left $R^s$-module and a right $R^t$-module. \end{ex}

\begin{ex} Let $s \ne t \in S$ with $m_{st} = 3$. Let $p$ be the $(s,t)$-coset containing $sts$. Then $R_p = R^s$ where the right action of $R^t$ is twisted by $ts$. \end{ex}

\begin{ex} Let $s \ne t \in S$ with $m_{st} = 3$. Let $p$ be the $(s,s)$-coset containing $sts$. Then $R_p = R$ as an $R^s$-bimodule, where the right action of $R^s$ is twisted by $t$. \end{ex}

 In all the examples above, $R_p$ is cyclic as an $(R^I, R^J)$-bimodule, generated by the identity element $1$. This is not true in general.

\begin{ex}  This example paraphrases \cite[Example 2.3.6(3)]{WThesis}. Let $W = S_4$ and $R = \Bbbk[x_1, x_2, x_3, x_4]$. Let $I = \{s_1, s_3\} = J$, and $p$ be the $(I,J)$-coset with $\mi{p} = s_2$. Then $\leftred(p) = \mt$ so $R_p = R$ as a vector space. If $R_p$ were cyclic it would be generated by the identity element of $R$. The linear terms in $R^I \cdot 1$ are spanned by $\{x_1+x_2, x_3+x_4\}$, and the linear terms in $1 \cdot R^J$ are spanned by $\{x_1+x_3, x_2+x_4\}$. Together these only span a proper subspace of the linear terms in $R$. \end{ex}

Despite the lack of cyclicity, Williamson is able to prove the rather subtle result below.

\begin{lem}\label{lem:endstd} Viewing $R_p$ as a ring via the identification with $R^{\leftred(p)}$, the natural map $R_p \to \End_{(R^I, R^J)}(R_p)$ is an isomorphism of rings and of $(R^I, R^J)$-bimodules. Therefore, $R_p$ is indecomposable as a graded bimodule. \end{lem}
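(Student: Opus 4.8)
The plan is to show that the natural ring map $\rho\co R_p \to \End_{(R^I,R^J)}(R_p)$, sending $a \in R_p \cong R^{\leftred(p)}$ to left multiplication by $a$, is an isomorphism; the bimodule statement and the indecomposability will follow formally once this is known, since $R_p$ is a domain and hence has no nontrivial idempotents. First I would check that $\rho$ is well-defined: left multiplication by $a \in R^{\leftred(p)}$ commutes with the left $R^I$-action (since $R^{\leftred(p)}$ is commutative and $R^I \subset R^{\leftred(p)}$-acting-by-multiplication), and it commutes with the twisted right $R^J$-action because $m \cdot f = (\mi{p}f)m$ and $a(\mi{p}f)m = (\mi{p}f)(am)$ by commutativity; so $\rho(a)$ is a bimodule endomorphism, and $\rho$ is visibly a ring homomorphism. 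Injectivity is immediate: $\rho(a) = 0$ forces $a = a \cdot 1 = 0$.

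The substance is surjectivity, and this is where I expect the real obstacle. Given $\phi \in \End_{(R^I,R^J)}(R_p)$, set $a := \phi(1) \in R_p$; I want to conclude $\phi = \rho(a)$, i.e. $\phi(m) = am$ for all $m \in R^{\leftred(p)}$. The difficulty, as the authors emphasize in the preceding examples, is that $R_p$ need not be cyclic, so one cannot simply write an arbitrary $m$ as $g \cdot 1 \cdot f$ and transport $\phi$ across. The right approach, I think, is to pass to the fraction field: let $Q = \operatorname{Frac}(R)$, and note $R^{\leftred(p)} \subset R$ has fraction field $Q$ as well if $\leftred(p)$ is such that $R^{\leftred(p)} \otimes_{\Bbbk} \Bbbk(\text{something})$... more precisely, $R_p \otimes_{R^I} Q^I$ (where $Q^I = \operatorname{Frac}(R^I)$) becomes a one-dimensional $Q^I$-vector space twisted on the right, hence visibly cyclic over $(Q^I, Q^J)$, and any bimodule endomorphism of the localization is multiplication by a scalar in $Q$. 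So $\phi$, after localizing, is multiplication by some element of $\operatorname{Frac}(R^{\leftred(p)})$; since $\phi$ preserves the integral form $R_p = R^{\leftred(p)}$ and $\phi(1) = a \in R^{\leftred(p)}$, that scalar must be $a$ itself, giving $\phi(m) = am$ on the nose. I would need to be careful that localization at $Q^I$ on the left (equivalently $Q^J$ on the right, via $\mi p$) is faithfully flat enough to detect equality of the two bimodule maps $\phi$ and $\rho(a)$ on the torsion-free module $R_p$ — this amounts to $R_p$ embedding into $R_p \otimes_{R^I} Q^I$, i.e. to $R^{\leftred(p)}$ being torsion-free over $R^I$, which holds since $R^{\leftred(p)} \subset R$ is a domain containing $R^I$.

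An alternative, perhaps cleaner, route avoiding fraction fields: use that $R$ is free as a module over $R^I$ (and over $R^J$) of finite rank, with explicit bases, and that $R_p \cong R^{\leftred(p)}$ sits inside $R$; then decompose $\phi$ against a basis and use the generalized Demazure surjectivity and reflection-faithfulness hypotheses on the realization to pin down the matrix coefficients. But I suspect this gets combinatorially heavy and the localization argument is the conceptual heart. Either way, once surjectivity is in hand, $\rho$ is a ring isomorphism $R_p \xrightarrow{\sim} \End_{(R^I,R^J)}(R_p)$; it is an isomorphism of $(R^I,R^J)$-bimodules because $\rho$ intertwines the bimodule structure on $R_p$ with the conjugation-free bimodule structure on $\End$ (here one checks $\rho(g \cdot a \cdot f) = g \cdot \rho(a) \cdot f$ using the action formulas, which is routine); and $R_p$ being a commutative domain has no idempotents other than $0, 1$, so the ring $\End_{(R^I,R^J)}(R_p)$ has no nontrivial idempotents, i.e. $R_p$ is indecomposable as a graded bimodule. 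The main obstacle, to reiterate, is establishing surjectivity of $\rho$ in the absence of cyclicity of $R_p$; the localization trick is my proposed way around it.
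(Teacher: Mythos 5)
The paper does not actually reprove the first statement: it quotes Williamson (\cite[Corollary 4.13]{SingSb}) for the isomorphism $R_p\simto\End_{(R^I,R^J)}(R_p)$, and then gets indecomposability from the fact that $\End(R_p)\cong R^{\leftred(p)}$ is a graded local ring (your ``commutative domain, so no nontrivial idempotents'' variant is equally fine). Your plan to prove the isomorphism directly by passing to fractions is in the spirit of how such statements about standard bimodules are proved, and the routine parts of your write-up are correct: well-definedness of $\rho$, injectivity, the bimodule-map check, the extension of an endomorphism to the localization (the right $R^J$-action extends to $Q^J$ since every $0\neq f\in R^J$ acts invertibly), and the use of torsion-freeness of $R^{\leftred(p)}$ over $R^I$ to descend the conclusion back to $R_p$.

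However, the crux of the surjectivity argument is asserted rather than proved, and the claim offered in its support is false. The localization $R_p\otimes_{R^I}Q^I$ is not one-dimensional over $Q^I$: it is $\operatorname{Frac}(R^{\leftred(p)})=Q^{\leftred(p)}$, of dimension $[W_I:W_{\leftred(p)}]$ over $Q^I$. What your argument really needs is that this localization is \emph{cyclic} as a $(Q^I,Q^J)$-bimodule, i.e.\ that the span of the products $Q^I\cdot(\mi{p}Q^J)$ is all of $Q^{\leftred(p)}$ --- and this is precisely the subtle point, since the analogous integral statement fails (the $S_4$ example preceding the lemma). It is true, but not ``visible'': one needs (i) that $W_I$ acts faithfully on $Q=\operatorname{Frac}(R)$, so that $Q/Q^{W_I}$ is Galois with group $W_I$ by Artin's theorem (this is where the faithfulness hypothesis on the realization enters); (ii) the Galois correspondence, which identifies the compositum $Q^{W_I}\cdot Q^{\mi{p}W_J\mi{p}\inv}$ with $Q^{W_I\cap \mi{p}W_J\mi{p}\inv}$; and (iii) Kilmoyer's theorem that $W_I\cap\mi{p}W_J\mi{p}\inv=W_{\leftred(p)}$, which is where the minimality of $\mi{p}$ in its double coset is used. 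With these three inputs your argument closes: an endomorphism of a cyclic $(Q^I,Q^J)$-bimodule is multiplication by its value at $1$ (a scalar in $Q^{\leftred(p)}$, not merely in $Q$), and restricting to the integral form gives $\phi=\rho(\phi(1))$. Without them, the surjectivity step --- which is the entire content of the lemma, as the paper signals by calling it ``rather subtle'' and outsourcing it to Williamson --- remains unproved.
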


\begin{proof} The first statement follows from \cite[Corollary 4.13]{SingSb}. 
Since $R^{\leftred(p)}$ and hence $\End(R_p)$ is a graded local ring, $R_p$ is indecomposable. \end{proof}


\begin{lem}\label{lem:projstd}
    Let $M$ be a finite direct sum of copies of shifts of $R_p$ and let $N\subset M$ be a direct summand. Then $N$ is also a direct sum of copies of shifts of $R_p$.
\end{lem}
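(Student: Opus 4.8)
The plan is to reduce the statement to the elementary fact that finitely generated graded projective modules over a connected graded ring are graded free, using Lemma~\ref{lem:endstd} to identify the relevant endomorphism ring.

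First I would set $\Lambda := \End_{(R^I,R^J)}(R_p)$ and invoke Lemma~\ref{lem:endstd} to identify $\Lambda$, as a graded ring, with $R^{\leftred(p)}$. Since $R^{\leftred(p)} \subset R$ is a graded subring concentrated in nonnegative degrees with degree-zero part $\Bbbk$, the ring $\Lambda$ is \emph{connected graded}; in particular the graded Nakayama lemma applies to finitely generated graded $\Lambda$-modules, and every such module that is projective is graded free, that is, isomorphic to a finite direct sum $\bigoplus_j \Lambda(b_j)$ of shifts of $\Lambda$. (Standard argument: a homogeneous lift of a homogeneous $\Bbbk$-basis of $P/\Lambda_{>0}P$ gives a surjection onto $P$ from a graded free module, which splits since $P$ is projective and has trivial kernel by graded Nakayama, the latter applying because finitely generated graded $\Lambda$-modules are bounded below.)

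Next I would introduce the full subcategory $\mathcal{C}$ of graded $(R^I,R^J)$-bimodules whose objects are the direct summands of finite direct sums of shifts of $R_p$; both $M$ and $N$ lie in $\mathcal{C}$, and $\mathcal{C}$ is idempotent complete since the ambient category of graded bimodules is abelian. The additive functor $G := \Hom_{(R^I,R^J)}(R_p,-)$, valued in graded right $\Lambda$-modules, sends $R_p(a)$ to the rank-one graded free module $\Lambda(a)$, and by Lemma~\ref{lem:endstd} it is fully faithful on the shifts of $R_p$. Being additive and fully faithful on a set of additive generators of the idempotent-complete category $\mathcal{C}$, it restricts to an equivalence between $\mathcal{C}$ and the category of finitely generated graded projective right $\Lambda$-modules.

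Finally I would assemble the pieces: $G(M) \cong \bigoplus_i \Lambda(a_i)$ is graded free, so its direct summand $G(N)$ is a finitely generated graded projective $\Lambda$-module, hence graded free by the first step, say $G(N) \cong \bigoplus_j \Lambda(b_j) = G\bigl(\bigoplus_j R_p(b_j)\bigr)$. Applying the inverse of the equivalence yields $N \cong \bigoplus_j R_p(b_j)$, which is what we want (an empty sum if $N = 0$). I expect the only point needing care is the verification that $G$ restricts to the claimed equivalence, i.e.\ that the category of summands of sums of shifts of $R_p$ is modelled by graded projective $\Lambda$-modules; this is routine once $\End(R_p)$ is known, so the genuine input is Lemma~\ref{lem:endstd}. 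An alternative, more hands-on route avoids $G$: a direct summand corresponds to a homogeneous idempotent in the graded matrix ring $\End(M) \cong \Mat_n(\Lambda)$, and one argues that such an idempotent is conjugate by a homogeneous unit to a diagonal one via idempotent lifting over the graded-local ring $\Lambda$; this carries essentially the same content.
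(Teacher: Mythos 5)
Your argument is correct, but it proves the lemma by a genuinely different route than the paper. The paper's proof is a direct appeal to the Krull--Schmidt theorem: since $\End(R_p)\cong R^{\leftred(p)}$ is graded local (Lemma~\ref{lem:endstd}), a finite direct sum of shifts of $R_p$ has a decomposition into summands with graded local endomorphism rings, and the graded version of Krull--Schmidt (the paper cites \cite[Theorem 11.50]{GBM}, noting that the key technical point is that a sum of homogeneous non-units in a graded local ring is a non-unit) forces any summand to be again a sum of shifts of $R_p$. You instead use the same key input, Lemma~\ref{lem:endstd}, to run a projectivization (Morita-style) argument: $\Hom(R_p,-)$ identifies the idempotent-complete additive category generated by shifts of $R_p$ with finitely generated graded projective modules over $\Lambda=R^{\leftred(p)}$, and then you use that $\Lambda$ is connected graded, so graded Nakayama makes such projectives graded free. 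Both routes are sound; the paper's is shorter given the citation and also yields uniqueness of the decomposition, while yours is more self-contained (no Krull--Schmidt needed) at the cost of using the slightly stronger hypothesis $\Lambda_0=\Bbbk$ (connectedness rather than mere graded-localness), which does hold here since $R$ is the polynomial ring of a realization over the field $\Bbbk$. Two small wording points: the idempotent completeness of your category $\mathcal{C}$ follows because it is by definition closed under summands and idempotents split in the ambient abelian category (not merely because the ambient category is abelian), and note that the ``hands-on alternative'' you sketch at the end --- splitting homogeneous idempotents over the graded-local matrix ring $\Mat_n(\Lambda)$ --- is essentially the paper's Krull--Schmidt argument in disguise.
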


\begin{proof}
This follows from the fact that the endomorphism ring of $R_p$ is (graded) local, by the Krull-Schmidt theorem. The Krull-Schmidt theorem  is usually stated in the non-graded context, and says that if a module $M$ over a ring has a finite decomposition $M = \bigoplus M_i$ where each $M_i$ has a local endomorphism ring, then the $M_i$ are indecomposable, and $M$ has a unique decomposition into indecomposable summands. Thus any summand of $M$ must be a direct sum of some of the $M_i$. For the proof in this context, see \cite[Theorem 11.50]{GBM}. This proof adapts immediately to the context of graded modules over a graded ring, graded local rings, and homogeneous direct sum decompositions. The crucial technical point is that a sum of homogeneous non-invertible elements in a graded local ring is non-invertible. \end{proof}

\begin{defn}[{\cite[Definition 6.1]{SingSb}}] \label{defn:nablafilt}
Let $M$ be a graded $(R^I,R^J)$-bimodule finitely generated as a left $R^I$-module and a right $R^J$-module.
Fix an enumeration $p_1,p_2,\ldots$ of the elements $W_I\backslash W/W_J$ compatible with the Bruhat order, i.e. such that $p_i<p_j$ only if $i<j$. A \emph{$\nabla$-filtration} $F_\bullet$ on $M$ (associated to the given enumeration) is a filtration of graded $(R^I,R^J)$-bimodules such that $F_{i+1}(M)/F_i(M)$ is isomorphic to a direct sum of shift of standard bimodules $R_{p_i}$.
\end{defn}

To reiterate, a $\nabla$-filtration is not just a filtration where the subquotients are standard bimodules. One also requires that the cosets appearing as subquotients appear in an order compatible with the Bruhat order on double cosets. The subquotients of a $\nabla$-filtration are unique by \cite[Lemma 6.2]{SingSb}.

Following the recent erratum to \cite{SingSb}, for an $(I,J)$-coset $p$ we set
\begin{equation} \nabla_p := R_p(\ell(\ma{p}) - \ell(J)). \end{equation}
We can define the \emph{character} of a bimodule $B$ with a $\nabla$-filtration $F_\bullet$ as 
\begin{equation}\label{defchar}
     \ch{B}=\sum_{p\in W_I\backslash W/W_J}\overline{g_p(B)}{}^I\hspace{-0.05cm}H_p^J 
\end{equation}
where $g_p(B)\in \mathbb{Z}[v^{\pm 1}]$ is some polynomial encoding the multiplicity of $\nabla_p$ in $F_{i+1}(B)/F_i(B)$. Here ${}^I\hspace{-0.05cm}H_p^J$ is a an element of the standard basis for the morphism space from $J$ to $I$ in the Hecke algebroid. We will not need the details in this paper; it suffices to note several facts. First, $\ch{\nabla_p} = {}^I\hspace{-0.05cm}H_p^J.$ Second, whenever $I \subset J$, we have $\BS([[I \subset J]]) = \nabla_p$ where $p$ is the $(I,J)$-coset containing the identity, so $\ch{\BS([[I \subset J]])} = {}^I\hspace{-0.05cm}H_p^J$, which for this particular $p$ we abbreviate to ${}^I\hspace{-0.05cm}H^J$. Similarly, $\BS([[J \supset I]]) = \nabla_p$, where $p$ is the $(J,I)$-coset containing the identity, so $\ch{\BS([[J \supset I]])} = {}^J\hspace{-0.05cm}H_p^I$, which for this particular $p$ we abbreviate to ${}^J\hspace{-0.05cm}H^I$.
 

Bimodules with $\nabla$-filtrations are closed under tensoring with singular Bott--Samelson bimodules (\cite[Theorem 6.4]{SingSb}). In particular, Bott--Samelson bimodules have $\nabla$-filtrations. However, if $B$ admits a $\nabla$-filtration, it is not obvious that a direct summand of $B$ should admit a $\nabla$-filtration. An explicit construction of $\nabla$-filtrations on Bott--Samelson bimodules and their summands is found in \cite[\S 4.5]{SingSb}, and we recall it now in a more general context of functorial $\nabla$-filtrations. This discussion of functoriality is not found quite so explicitly in Williamson's work.

\begin{defn} A \emph{filtration of the identity functor} of $\Bim(J,I)$ is a family $\{F_C\}$ of additive endofunctors of $\Bim(J,I)$, indexed by downward-closed subsets $C\subset W_I\backslash W/W_J$ in the Bruhat order. Each functor $F_C$ is a subfunctor of the identity functor, in that $F_C(M) \subset M$, and for a morphism $f \co M \to N$, $F_C(f)$ is just the restriction of $f$ to $F_C(M)$, which has image in $F_C(N)$. Moreover, whenever $C \subset C'$ we have inclusions $F_C(M) \subset F_{C'}(M)$. \end{defn}

Given $C\subset C'$ the quotient $F_{C'}/F_{C} (M):=F_{C'}(M)/F_C(M)$ defines an additive endofunctor on $\Bim(J,I)$.

\begin{defn} Let $\{F_C\}$ be a filtration of the identity functor. Given an enumeration $\{p_i\}$ of the double cosets, we say that $\{F_C\}$ \emph{induces a functorial $\nabla$-filtration} on a bimodule $M$ if 
\begin{equation}\label{inotation}
F\lei(M):= F_{\{p_j \mid j \le i\}}(M)     
\end{equation}
is a $\nabla$-filtration on $M$. \end{defn}

The functoriality of the filtration $F\lei(M)$ ensures a compatibility of this $\nabla$-filtration with direct summands.

\begin{lem} \label{summandtosummand} Let $\{F_C\}$ be a filtration of the identity functor. Let $M$ be a bimodule, and $N$ a direct summand of $M$, the image of an idempotent $e \in \End(M)$. Then $F_C(N)$ is a direct summand of $F_C(M)$, the image of $e$ restricted to $F_C(M)$. \end{lem}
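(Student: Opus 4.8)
The plan is to observe that this is a completely formal statement about subfunctors of the identity functor: the functor $F_C$, being additive and a subfunctor of the identity with restriction maps, automatically plays well with idempotents. First I would set up notation: write $\iota \co N \hookrightarrow M$ and $\pi \co M \twoheadrightarrow N$ for the maps exhibiting $N$ as the image of the idempotent $e = \iota \circ \pi$, so that $\pi \circ \iota = \id_N$. Applying $F_C$, which is additive, we get $F_C(e) = F_C(\iota) \circ F_C(\pi)$ and $F_C(\pi) \circ F_C(\iota) = \id_{F_C(N)}$, so $F_C(e)$ is an idempotent on $F_C(M)$ with image $F_C(N)$.

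The only thing to check is that under the identification $F_C(M) \subset M$ and $F_C(N) \subset N$, the endomorphism $F_C(e)$ agrees with ``$e$ restricted to $F_C(M)$''. But this is exactly the definition of a filtration of the identity functor: for any morphism $f \co M \to M$, the map $F_C(f)$ is literally the restriction of $f$ to $F_C(M)$ (which lands in $F_C(M)$ since $C \subset C$). Applying this to $f = e$ gives that $F_C(e) = e|_{F_C(M)}$, and its image is $F_C(N)$ because $F_C$ is a subfunctor: $e(M) = N$ forces $e(F_C(M)) \subset F_C(M) \cap N = F_C(N)$, using that the inclusion $F_C(N) \hookrightarrow F_C(M)$ is compatible with $N \hookrightarrow M$. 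Conversely $F_C(N) = e(F_C(N)) \subset e(F_C(M))$ since $e$ acts as the identity on $N \supset F_C(N)$.

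There is essentially no obstacle here; the entire content is bookkeeping to confirm that the three defining properties of a filtration of the identity functor (subfunctor, restriction on morphisms, monotonicity in $C$) are exactly what is needed. The one point requiring a half-sentence of care is that $e|_{F_C(M)}$ is genuinely idempotent as an endomorphism of $F_C(M)$ and that its image is a direct summand of $F_C(M)$ — but idempotence is inherited from $e^2 = e$ by restriction, and the image of an idempotent is always a direct summand (with complement the image of $\id - e$, whose restriction to $F_C(M)$ is $\id_{F_C(M)} - e|_{F_C(M)}$). Thus I would present the proof as: apply $F_C$ to the idempotent $e$, note $F_C(e) = e|_{F_C(M)}$ by definition of a filtration of the identity functor, and conclude that $F_C(N) = \im(e|_{F_C(M)})$ is a direct summand of $F_C(M)$.
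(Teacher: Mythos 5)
Your proof is correct and follows essentially the same route as the paper: apply the subfunctor $F_C$ to the split pair $\pi,\iota$ with $\pi\circ\iota=\id_N$, use additivity/functoriality to get $F_C(\pi)\circ F_C(\iota)=\id_{F_C(N)}$, and identify $F_C(e)$ with $e|_{F_C(M)}$ via the defining restriction property. The extra bookkeeping you include (that the image of $e|_{F_C(M)}$ is exactly $F_C(N)$) is a fine elaboration of what the paper leaves implicit.
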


\begin{proof} The same is true of any additive subfunctor of the identity, but let us spell it out for the reader. The bimodule $N$ is a summand of $M$ if and only if there is a projection map $p \co M \to N$ and an inclusion map $\iota \co N \to M$ such that $p \circ \iota = \id_N$. Note that $N$ is the image of $e = \iota \circ p$.
Now we have 
\[F_C(p)\circ F_C(\iota)=F_C(p\circ\iota)=F_C(\id_N) = \id_{F_C(N)},\]
making $F_C(N)$ a direct summand of $F_C(M)$. 
\end{proof}

\begin{lem} \label{lem:functorialtosummand} Let $\{F_C\}$ be a filtration of the identity functor, which induces a $\nabla$-filtration on a bimodule $M$. Let $N$ be isomorphic to
a direct summand of $M$. Then $\{F_C\}$ induces a $\nabla$-filtration on $N$. \end{lem}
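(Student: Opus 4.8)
The plan is to combine Lemma~\ref{summandtosummand} with the uniqueness of $\nabla$-filtration subquotients and the structural fact Lemma~\ref{lem:projstd} about summands of sums of shifts of a single standard bimodule. First I would fix the enumeration $\{p_i\}$ of double cosets that makes $\{F_C\}$ a functorial $\nabla$-filtration on $M$. Since $N$ is isomorphic to a direct summand of $M$, pick a projection $p\co M\to N$ and inclusion $\iota\co N\to M$ with $p\circ\iota=\id_N$, and set $e=\iota\circ p\in\End(M)$, an idempotent with image (isomorphic to) $N$. By Lemma~\ref{summandtosummand}, each $F_C(N)$ is a direct summand of $F_C(M)$, cut out by the restriction of $e$; in particular, writing $F\lei(N):=F_{\{p_j\mid j\le i\}}(N)$, each $F\lei(N)$ is a summand of $F\lei(M)$, and the inclusions $F\lei(N)\subset F_{\le i+1}(N)$ are just the restrictions of the corresponding inclusions for $M$.

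Next I would pass to subquotients. Because $e$ is a subfunctor-compatible idempotent, it descends to an idempotent $\bar e$ on each subquotient $F_{\le i+1}(M)/F\lei(M)$, and the subquotient $F_{\le i+1}(N)/F\lei(N)$ is canonically identified with the image of $\bar e$, hence with a direct summand of $F_{\le i+1}(M)/F\lei(M)$. By hypothesis the latter subquotient is a finite direct sum of shifts of the standard bimodule $R_{p_i}$ (equivalently of $\nabla_{p_i}$). Now Lemma~\ref{lem:projstd} applies: a direct summand of a finite direct sum of shifts of $R_{p_i}$ is again a finite direct sum of shifts of $R_{p_i}$. Therefore each subquotient $F_{\le i+1}(N)/F\lei(N)$ is a direct sum of shifts of $R_{p_i}$, and since these subquotients occur in an order compatible with the Bruhat order (inherited from the enumeration used for $M$), the filtration $F\lei(N)$ is by definition a $\nabla$-filtration on $N$. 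Finally, since $\nabla$-filtrations are preserved under isomorphism — transporting the filtration along the given isomorphism $N\simeq N'$ where $N'$ is the literal summand of $M$ — we conclude that $\{F_C\}$ induces a $\nabla$-filtration on $N$ itself.

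The main obstacle, and the only step requiring genuine care, is the claim that taking the image of the induced idempotent $\bar e$ commutes with forming the subquotient, i.e. that $F_{\le i+1}(N)/F\lei(N)$ really is the image of $\bar e$ acting on $F_{\le i+1}(M)/F\lei(M)$ rather than merely a subquotient of it. This is where functoriality of $\{F_C\}$ is essential: one uses that $F_C$ sends the split short exact sequence $N\to M\to M/N$ (in the relevant subquotient categories) to a split short exact sequence, as in the proof of Lemma~\ref{summandtosummand}, so that the summand decomposition is compatible with the filtration level by level. Once this compatibility is in place, the invocation of Lemma~\ref{lem:projstd} is immediate and the rest is bookkeeping with the enumeration.
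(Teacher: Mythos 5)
Your proof is correct and follows essentially the same route as the paper: Lemma~\ref{summandtosummand} (via the maps $p$ and $\iota$, equivalently the idempotent $e$), then functoriality/additivity of the subquotient functors $F_{\le i+1}/F_{\le i}$ to see that $F_{\le i+1}(N)/F_{\le i}(N)$ is a direct summand of $F_{\le i+1}(M)/F_{\le i}(M)$, then Lemma~\ref{lem:projstd}. The "obstacle" you flag is resolved exactly as you suggest — the paper applies the additive functor $F_i/F_{i-1}$ to $p$ and $\iota$ so that their images compose to the identity on the subquotient of $N$ — so there is no gap.
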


\begin{proof} As in the proof of Lemma~\ref{summandtosummand}, $F_C(N)$ is (isomorphic to) a direct summand of $F_C(M)$, via maps $p:M\to N$ and $\iota:N\to M$.
Then $(F_i/F_{i-1})(p)$ and $(F_i/F_{i-1})(\iota)$ compose to the identity map on $F_i(N)/F_{i-1}(N)$ making the latter a direct summand of $F_i(M)/F_{i-1}(M)$. 
By Lemma \ref{lem:projstd}, $F_i(N)/F_{i-1}(N)$ is also a direct sum of shifts of $R_{p_i}$. Thus $\{F_i(N)\}$ is a $\nabla$-filtration. 
\end{proof}

\begin{lem} \label{lem:soequalonsummands} Let $\{F_C\}$ and $\{G_C\}$ be filtrations of the identity functor. Fix a bimodule $M$, and suppose $F_C(M) = G_C(M)$ as submodules of $M$, for all $C$. If $N$ is a direct summand of $M$, then $F_C(N) = G_C(N)$ for all $C$. \end{lem}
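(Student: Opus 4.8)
The plan is to reduce everything to Lemma~\ref{summandtosummand} and then use the hypothesis $F_C(M) = G_C(M)$ directly. Write $N$ as the image of an idempotent $e \in \End(M)$: fix a projection $p \colon M \to N$ and an inclusion $\iota \colon N \to M$ with $p \circ \iota = \id_N$, and set $e = \iota \circ p$, so that $N$ is identified with the submodule $e(M) \subseteq M$. No input about $\nabla$-filtrations, standard bimodules, or the Bruhat order is needed; the statement holds for any pair of subfunctors of the identity on an additive category that agree on $M$.

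First I would invoke Lemma~\ref{summandtosummand} for the filtration $\{F_C\}$: it gives that $F_C(N)$, viewed as a submodule of $F_C(M)$ (hence of $M$), is precisely the image $e(F_C(M))$ of $e$ restricted to $F_C(M)$. Applying the same lemma to $\{G_C\}$ gives $G_C(N) = e(G_C(M))$ as submodules of $M$. Since by hypothesis $F_C(M) = G_C(M)$ for every $C$, one obtains
\[ F_C(N) = e(F_C(M)) = e(G_C(M)) = G_C(N) \]
as submodules of $M$, and therefore as submodules of $N$.

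The only point to treat carefully — rather than a genuinely hard step — is the identification of ``$F_C(N)$ as a subfunctor value inside the abstract module $N$'' with ``$e(F_C(M))$ as a submodule of $M$''. This is exactly what the proof of Lemma~\ref{summandtosummand} supplies: applying $F_C$ to the factorization $e = \iota \circ p$ yields $F_C(e) = F_C(\iota) \circ F_C(p)$, where $F_C(p) \colon F_C(M) \to F_C(N)$ is surjective (since $F_C(p) \circ F_C(\iota) = \id_{F_C(N)}$) and $F_C(\iota) \colon F_C(N) \to F_C(M)$ is the restriction of $\iota$; hence $F_C(N) = F_C(\iota)(F_C(N)) = F_C(e)(F_C(M)) = e(F_C(M))$. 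Equivalently, one checks $F_C(N) = N \cap F_C(M)$ inside $M$, which makes the conclusion transparent.
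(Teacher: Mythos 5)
Your proof is correct and takes essentially the same route as the paper: the paper also identifies $F_C(N)$ and $G_C(N)$ with the image of $e$ restricted to $F_C(M)=G_C(M)$ (the content of Lemma~\ref{summandtosummand}) and concludes immediately. Your extra care in identifying $F_C(N)$ inside $M$ (e.g.\ via $F_C(N)=N\cap F_C(M)$) is a fine, slightly more explicit version of what the paper leaves implicit.
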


\begin{proof} If $N$ is the image of $e$ inside $M$, then $F_C(N)$ is the image of $e$ restricted to $F_C(M)$, and $G_C(N)$ is the image of $e$ restricted to $G_C(M)$. Since $F_C(M) = G_C(M)$, we have $F_C(N) = G_C(N)$. \end{proof}

Now we recall Williamson's functorial $\nabla$-filtration. Let $B\in \Bim(J,I)$. Then we can regard $B$ as a $R^I\otimes_\Bbbk R^J$-module and thus as (the global sections of) a quasi-coherent sheaf on $V/W_I\times V/W_J$, where $V$ is our realization of $W$. 


\begin{defn}\label{def.Gamma}
    Let $C\subset W_I\backslash W/W_J$ be a downward-closed set and $B\in \Bim(J,I)$.
    Regarding $B$ as a quasi-coherent sheaf on $V/W_I\times V/W_J$, 
    set $\Gamma_C(B)\subset B$ to be the submodule consisting of the elements supported on the graph of $C$, or to be more precise, supported on the image of $\{(xv,v)\ |\ x\in\pi\inv C\subset W, v\in V\}$ under the projection $V\times V\to V/W_I\times V/W_J$.
    This defines a left exact functor $\Gamma_C:\Bim(J,I)\to \Bim(J,I)$  and a filtration $\{\Gamma_C\}$ of the identity functor, called the \emph{support filtration}.
\end{defn}

\begin{prop}\label{supportfiltr} 
If $B\in \SSBim(J,I)$, then the support filtration $\{\Gamma_C\}$
is a functorial $\nabla$-filtration on $B$ (for any enumeration of $W_I\backslash W/W_J$ compatible with the Bruhat order).
\end{prop}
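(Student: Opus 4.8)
\textbf{Proof proposal for Proposition~\ref{supportfiltr}.}

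The plan is to reduce the statement to results already available in Williamson's work, specifically \cite[\S 4.5, \S 6.1]{SingSb}, by organizing them into the functorial framework set up above. First I would recall the construction of $\Gamma_C$ on the generating objects: for an atomic expression $[[I \subset J]]$ one has $\BS([[I\subset J]]) = R^I$, and regarded as a sheaf on $V/W_I \times V/W_J$ this is supported on the (finitely many) relevant graphs; the key input is Williamson's computation that $\Gamma_C$ applied to such a bimodule, and more generally the behaviour of $\Gamma_C$ under tensoring with $\BS([[I\subset J]])$ and $\BS([[J\supset I]])$, produces exactly the stages of a $\nabla$-filtration. This is essentially \cite[Theorem 6.4]{SingSb} together with the explicit filtration in \cite[\S 4.5]{SingSb}; the novelty here is only to package it functorially, i.e. to observe that each step is natural in the bimodule, so that the family $\{\Gamma_C\}$ of Definition~\ref{def.Gamma} is genuinely a filtration of the identity functor and that the induced $F_{\le i}$ of \eqref{inotation} is a $\nabla$-filtration.

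Concretely, I would argue by induction on the width of a reduced expression $I_\bullet \expr p$. For the base case, an expression of width zero gives $\BS([I]) = R^I$, and the support filtration is visibly a $\nabla$-filtration (a single standard bimodule $\nabla_{p_{\id}}$, concentrated in the correct stage). For the inductive step, write $\BS(I_\bullet) = \BS(I'_\bullet) \otimes_{R^{I_{d-1}}} \BS([I_{d-1},I_d])$, and use that $\Gamma_C$ interacts with the functor $- \otimes_{R^{I_{d-1}}} \BS([I_{d-1},I_d])$ in the way dictated by Williamson's analysis: tensoring with an ``upper'' atomic bimodule $\BS([[I \subset J]])$ corresponds to the quotient map $\pi$ on double cosets (cf. \Cref{lem.posetprojection} and the proof of \Cref{newbruhatthm}), while tensoring with a ``lower'' atomic bimodule $\BS([[J \supset I]])$ corresponds to $\pi^{-1}$; in both cases the support conditions track exactly the combinatorics of subordinate paths. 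The ordering requirement in Definition~\ref{defn:nablafilt} — that subquotients appear in a Bruhat-compatible order — is where one uses that $\Gamma_C$ is defined for \emph{every} downward-closed $C$, not just intervals $\{\le p\}$: the filtration by all the $F_{\le i}$ refines any fixed enumeration, and \Cref{newbruhatthm} guarantees that the cosets that can appear are precisely those in $\{\le p\}$, so no subquotient is out of order. Finally, once the statement holds for every singular Bott--Samelson bimodule, Lemma~\ref{lem:functorialtosummand} extends it to every object of $\SSBim(J,I)$, since such objects are summands of Bott--Samelson bimodules by Proposition~\ref{ass}.

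The main obstacle I anticipate is the compatibility of $\Gamma_C$ with the tensor product, i.e. showing that $\Gamma_C(B \otimes_{R^{J}} \BS([J,K]))$ is computed from $\Gamma_{C'}(B)$ for the appropriate downward-closed $C'$ (the preimage or image of $C$ under the relevant quotient map of double cosets). This requires a careful geometric argument about supports of quasi-coherent sheaves under the pullback/pushforward along the maps $V/W_I \times V/W_J \to V/W_I \times V/W_K$ and their fibre products, controlling how the graph of a set of double cosets transforms; the subtlety is that tensoring with a lower atomic bimodule is not exact, so one must check that the associated filtration stages are still clean direct sums of shifted standards rather than extensions — this is exactly the content that Williamson establishes in \cite[\S 4.5, Theorem 6.4]{SingSb}, and the work here is to verify that his construction is natural in $B$ and therefore assembles into a filtration of the identity functor. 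Everything else — the base case, the passage to summands, the ordering condition — is then formal given the lemmas already proved above.
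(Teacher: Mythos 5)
Your proposal is correct and follows essentially the same route as the paper: cite Williamson's results (\cite[Lemma 6.2 and Theorem 6.4, \S 4.5]{SingSb}) to get the functorial $\nabla$-filtration on Bott--Samelson bimodules, then pass to arbitrary objects of $\SSBim(J,I)$ via Lemma~\ref{lem:functorialtosummand} since they are summands of Bott--Samelson bimodules. The extra inductive sketch you give of how $\Gamma_C$ interacts with tensoring by atomic bimodules is just an unpacking of Williamson's Theorem 6.4, which both you and the paper ultimately treat as the external input.
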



\begin{proof}
For a Bott--Samelson bimodule $B\in\BSBim(J,I)$, this follows from \cite[Lemma 6.2 and Theorem 6.4]{SingSb}. Any singular Soergel bimodule is a direct summand of a Bott--Samelson bimodule, so by \Cref{lem:functorialtosummand} we obtain the result for any singular Soergel bimodule. \end{proof}



Proposition~\ref{supportfiltr} implies in particular that the character \eqref{defchar} is well-defined for singular Soergel bimodules.
Now we can finish the proof of \Cref{ass}.

\begin{proof}[Proof of the last claim in Proposition~\ref{ass}]
For $p$ an $(I,J)$-coset, let the reader recall from \cite[p. 4569]{SingSb} the definitions of the standard basis ${}^I\hspace{-0.05cm}H_p^J$ and of the standard generators ${}^I\hspace{-0.05cm}H^J$ in the Hecke algebroid over the ring $\mathbb{Z}[v,v^{-1}].
 $
 By \cite[Theorem 7.10]{SingSb} we have \begin{equation} \label{eq:chbq} \mathrm{ch}(B_q)\in {}^I\hspace{-0.05cm}H_q^J+\sum_{r<q}\mathbb{Z}[v,v^{-1}]\,\cdot\, {}^I\hspace{-0.05cm}{H}_r^J. \end{equation}
%
Since the characters of the indecomposable bimodules $B_q$ form a basis for the Hecke algebroid, the decomposition of a Bott-Samelson bimodule is determined by its character. Thus, to prove the third claim it is enough to show that 
\begin{equation}\label{character}
    \mathrm{ch}(\BS(I_{\bullet}))\in \sum_{q\leq p}\mathbb{Z}[v,v^{-1}]\,\cdot\, {}^I\hspace{-0.05cm}{H}_q^J
\end{equation}
whenever $I_{\bullet}$ is an expression (not necessarily reduced) for $p$.

If $I_{\bullet} = [I_0, \ldots, I_d]$ then $\ch{\BS(I_{\bullet})}$ is an iterated product of ${}^{I_j}\hspace{-0.05cm}{H}^{I_{j+1}}$. By an iterated application of \cite[Proposition 2.8]{SingSb}, the result is a linear combination of ${}^I\hspace{-0.05cm}{H}_q^J$ over those cosets $q$ which are termini of paths subordinate to $I_{\bullet}$ (to see this spelled out in more detail see \cite[\S 3.3]{EKLP2}. Then the result follows from Theorem \ref{thm:bruhat}.
\end{proof}

\subsection{Support filtrations and lower terms} \label{ss.supportvsfactor}

In this section we give an alternative construction of the support filtration.

\begin{defn}
Given a downward-closed set $C\subset W_I\backslash W/W_J$ and a $(R^I,R^J)$-bimodule $B$, we set
\begin{equation*}
    N_C(B):=\sum_{f\in \Hom(B_q,B), q\in C}
\im(f).
\end{equation*}
\end{defn}

If $f \in \Hom(B_q,B)$ and $g \in \Hom(B,B')$ then $g \circ f \in \Hom(B_q,B')$. From this it is easy to see that $N_C$ is a subfunctor of the identity functor, and $\{N_C\}$ is a filtration of the identity functor.

\begin{lem}\label{NsubGamma} For any $B \in \Bim(J,I)$ and any downward-closed subset $C$ of the Bruhat order on $(I,J)$-cosets, we have $N_C(B) \subset \Gamma_C(B)$. \end{lem}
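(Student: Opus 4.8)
The statement $N_C(B) \subset \Gamma_C(B)$ compares the factorization filtration with the support filtration, and it suffices to check this on generators of $N_C(B)$, namely on $\im(f)$ for $f \co B_q \to B$ with $q \in C$. Since $\Gamma_C$ is a subfunctor of the identity, $f(\Gamma_C(B_q)) \subset \Gamma_C(B)$; so it is enough to show $\Gamma_C(B_q) = B_q$ for $q \in C$, i.e. that $B_q$ is \emph{entirely} supported on the graph of $C$. Concretely, I need that every element of $B_q$, viewed as a quasi-coherent sheaf on $V/W_I \times V/W_J$, is supported on the image of $\{(xv,v) \mid x \in \pi\inv(\{\le q\}),\ v \in V\}$ — and since $\{\le q\} \subset C$ whenever $q \in C$, this gives the claim.

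\textbf{Key steps.} First, reduce to $B_q$ as above. Second, recall that $B_q$ is a direct summand of $\BS(I_\bullet)$ for a reduced expression $I_\bullet \expr q$ (Proposition~\ref{ass}), and by Proposition~\ref{supportfiltr} the support filtration $\{\Gamma_C\}$ is a functorial $\nabla$-filtration on $\BS(I_\bullet)$; hence by Lemma~\ref{summandtosummand} (applied to the idempotent cutting out $B_q$), $\Gamma_C(B_q)$ is the corresponding summand of $\Gamma_C(\BS(I_\bullet))$. Third, by the definition of a $\nabla$-filtration and the fact (recalled after Definition~\ref{defn:nablafilt}) that the subquotients of the $\nabla$-filtration of $B_q$ are shifts of $R_{p}$ for cosets $p$ appearing in $\ch{B_q}$, which by \eqref{eq:chbq} satisfy $p \le q$: the filtration $\Gamma_{\le i}(B_q)$ already reaches all of $B_q$ by the stage $p_i = q$. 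In other words, $\Gamma_{\{\le q\}}(B_q) = B_q$, because there are no subquotients indexed by cosets $\not\le q$. Since $q \in C$ implies $\{\le q\} \subset C$ and hence $B_q = \Gamma_{\{\le q\}}(B_q) \subset \Gamma_C(B_q) \subset B_q$, we conclude $\Gamma_C(B_q) = B_q$. Then for any $f \co B_q \to B$ we get $\im(f) = f(B_q) = f(\Gamma_C(B_q)) \subset \Gamma_C(B)$, and summing over all such $f$ (with $q \in C$) yields $N_C(B) \subset \Gamma_C(B)$.

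\textbf{Main obstacle.} The routine part is the reduction; the substantive point is the claim $\Gamma_{\{\le q\}}(B_q) = B_q$, i.e. that $B_q$ is supported entirely on the graph of $\{\le q\}$. This is essentially the assertion that the character of $B_q$ only involves standard bimodules $R_p$ with $p \le q$ — which is exactly \eqref{eq:chbq} — combined with the observation that the support of $R_p$ is contained in the graph of $\{\le p\}$ (immediate from the definition of $R_p$ via twisting by $\mi{p}$). I would phrase this as: the support of any bimodule carrying a $\nabla$-filtration is the union of the graphs of the cosets $p$ with $g_p(B) \ne 0$, and apply it to $B = B_q$. One must be slightly careful that ``supported on the graph of $C$'' for a subquotient behaves well under extensions, but this follows since support is closed under submodules and extensions of sheaves with support in a fixed closed set.
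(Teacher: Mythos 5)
Your proposal is correct and follows essentially the same route as the paper: the paper's proof is exactly the observation that $B_q = \Gamma_C(B_q)$ for $q \in C$, after which functoriality of $\Gamma_C$ gives $\im(f) \subset \Gamma_C(B)$ for any $f \co B_q \to B$. The only difference is that you spell out the justification of $B_q = \Gamma_C(B_q)$ (via the $\nabla$-filtration of $B_q$ and \eqref{eq:chbq}), which the paper states without further comment; that elaboration is consistent with Proposition~\ref{supportfiltr} and introduces no gap.
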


\begin{proof} If $q \in C$ then $B_q = \Gamma_C(B_q)$. Thus any map $B_q \to B$ has image in $\Gamma_C(B)$. \end{proof}

The following is a generalization of \cite[Lemma 4.7]{PatRouquier} from the case of one-sided singular Soergel bimodules.

\begin{prop}\label{N=Gamma}
    For $B\in \SSBim(J,I)$ and a downward-closed finite subset $C\subset W_I\backslash W/W_J$ we have $N_C(B)=\Gamma_C(B)$.
\end{prop}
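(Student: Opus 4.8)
The plan is to prove the inclusion $\Gamma_C(B) \subset N_C(B)$, since the reverse inclusion is Lemma~\ref{NsubGamma}. The key reduction is to the case where $B$ is a singular Bott--Samelson bimodule: since any $B \in \SSBim(J,I)$ is a direct summand of some $\BS(I_\bullet)$, and both $\Gamma_C$ and $N_C$ are subfunctors of the identity, Lemma~\ref{lem:soequalonsummands} tells us that if $\Gamma_C = N_C$ on $\BS(I_\bullet)$ then the same holds on $B$. So it suffices to treat Bott--Samelson bimodules. (One should double-check here that $N_C$ is genuinely a subfunctor of the identity in the precise sense of the definition — this was noted right after the definition of $N_C$ — so that Lemma~\ref{lem:soequalonsummands} applies.)

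For a Bott--Samelson bimodule, I would argue by induction on the width of $I_\bullet$, using the functorial $\nabla$-filtration. By Proposition~\ref{supportfiltr}, $\{\Gamma_C\}$ induces a $\nabla$-filtration on $\BS(I_\bullet)$; in particular, for the one-step enlargement of $C$ by a maximal element $p$ (so $C' = C \sqcup \{p\}$ with $p$ maximal in $C'$ and $C$ downward-closed), the quotient $\Gamma_{C'}(\BS(I_\bullet))/\Gamma_C(\BS(I_\bullet))$ is a direct sum of shifts of the standard bimodule $R_p$. The strategy is then: given a homogeneous element $b \in \Gamma_{C'}(\BS(I_\bullet))$, its image $\bar b$ in this quotient lies in a sum of shifts of $R_p$; by Lemma~\ref{lem:endstd}, $R_p$ is cyclic-up-to-endomorphisms over its own ring $R^{\leftred(p)}$, and more to the point every element of a direct sum of shifts of $R_p$ is in the image of a bimodule map from such a direct sum. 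So I want to produce a map $R_p^{\oplus} \to \BS(I_\bullet)$ (for an appropriate sum of shifts) landing in $\Gamma_{C'}$ and hitting $\bar b$ modulo $\Gamma_C$; combined with the inductive hypothesis $\Gamma_C(\BS(I_\bullet)) = N_C(\BS(I_\bullet)) \subset N_{C'}(\BS(I_\bullet))$, this would place $b$ in $N_{C'}(\BS(I_\bullet))$. The missing ingredient is that $R_p$ is a direct summand of a reduced Bott--Samelson $\BS(M_\bullet)$ expressing $p$, up to the grading shift relating $R_p$ and $\nabla_p = R_p(\ell(\ma p) - \ell(J))$ and the relation $B_p \mid \BS(M_\bullet)$; composing $R_p^{\oplus} \hookrightarrow \BS(M_\bullet)^{\oplus}$ (as a summand) with a map $\BS(M_\bullet) \to \BS(I_\bullet)$ realizing the chosen element gives a map that factors through $B_p$, hence lies in $N_{C'}$.

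The main obstacle I anticipate is the lifting step: given $\bar b \in (\Gamma_{C'}/\Gamma_C)(\BS(I_\bullet))$, realizing it as (the reduction of) the image of an explicit morphism $B_p \to \BS(I_\bullet)$. Abstractly this is a statement about $\Hom(R_p, \BS(I_\bullet)/\Gamma_C(\BS(I_\bullet)))$ surjecting onto the relevant submodule, which requires knowing that $\Hom(R_p, -)$ is exact enough — equivalently, that $\mathrm{Ext}^1$ from $R_p$ into the lower piece $\Gamma_C(\BS(I_\bullet))$ (which, by induction and the $\nabla$-filtration, is built from $R_q$ with $q < p$ or $q \in C$) vanishes. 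This is exactly the kind of $\mathrm{Ext}$-vanishing between standards that underlies the theory of $\nabla$-filtrations; concretely, it should follow from the fact that the enumeration is Bruhat-compatible together with $\mathrm{Ext}^1(R_p, R_q) = 0$ when $q$ does not precede $p$, a vanishing one can extract from \cite{SingSb} (or re-prove via the support geometry: $R_p$ is a line bundle on the graph closure of $p$, and the graph of $q$ for $q \in C$ is a closed subset not containing a dense subset of the graph of $p$). If that $\mathrm{Ext}$-vanishing is awkward to cite cleanly, an alternative is to bypass lifting entirely: pick a reduced $M_\bullet \expr p$ and directly analyze $\Hom(\BS(M_\bullet), \BS(I_\bullet))$ — since $\BS(M_\bullet)$ is free as a one-sided module, $\Hom$ out of it is computed on generators, and one can write down enough morphisms to see that their images together with $\Gamma_C(\BS(I_\bullet))$ exhaust $\Gamma_{C'}(\BS(I_\bullet))$, using the explicit $\nabla$-filtration of \cite[\S 4.5]{SingSb}. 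Either way, once the top graded piece is handled, the induction closes immediately.
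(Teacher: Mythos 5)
Your overall strategy is genuinely different from the paper's, and as written it has a real gap at exactly the step you flag as the ``main obstacle.'' Two problems. First, the specific mechanism you propose is false: $R_p$ (equivalently $\nabla_p$, up to shift) is \emph{not} a direct summand of a reduced Bott--Samelson $\BS(M_\bullet)\expr p$. Standard bimodules occur only as subquotients of the $\nabla$-filtration; already in the ordinary case $B_s$ is indecomposable with subquotients $R_e$ and $R_s$, and neither is a summand. The usable fact is only that $\nabla_p$ is a \emph{quotient} of $B_p$ (multiplicity one at the top of the filtration, cf.\ \eqref{eq:chbq}), so your map into $\BS(I_\bullet)$ must be produced by \emph{lifting} a map $B_p\to \Gamma_{C'}(\BS(I_\bullet))/\Gamma_C(\BS(I_\bullet))$ along the quotient, not by splitting. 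Second, that lifting is precisely the nontrivial input, and you do not establish it. What is needed is surjectivity of $\Hom(B_p,\Gamma_{C'}B)\to\Hom(B_p,\Gamma_{C'}B/\Gamma_C B)$, i.e.\ exactness of $\Hom(B_p,-)$ on $\nabla$-exact sequences in the \emph{singular} setting; the vanishing you suggest, $\mathrm{Ext}^1(R_p,R_q)=0$, is not the statement you need (the source object is a Soergel bimodule, not a standard one) and is not available off the shelf. In this paper the singular exactness statement is Lemma~\ref{lem.nablaexact}, which is proved \emph{after} Proposition~\ref{N=Gamma} via the Soergel--Williamson hom formula \cite[Theorem 7.9]{SingSb}; you could invoke that formula directly to avoid circularity, but then you should say so and check the character bookkeeping, which your sketch does not do. Your opening reduction to Bott--Samelson bimodules via Lemma~\ref{lem:soequalonsummands} is fine.

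For comparison, the paper avoids any lifting argument in the singular setting. It uses Lemma~\ref{BinOrdinarySB} to reduce to bimodules of the form $\res B$ with $B$ an ordinary Bott--Samelson bimodule, where it is already known from \cite[Proposition 12.26, Theorem 10.32]{GBM} that $\{N_D\}$ induces a functorial $\nabla$-filtration; since $\{\Gamma_D\}$ does too (Proposition~\ref{supportfiltr}) and $N_D\subset\Gamma_D$ (Lemma~\ref{NsubGamma}), the two filtrations coincide for size reasons. The equality is then transported to the singular side using $\Gamma_C\circ\res=\res\circ\Gamma_{\pi\inv C}$ (Lemma~\ref{ResGamma}) and the fact that $\res$ sends $\SSBim_{\pi\inv C}(\emptyset,\emptyset)$ into $\SSBim_C(J,I)$. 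If you want to salvage your direct induction on $C$, you would essentially be re-proving the singular analogue of \cite[Proposition 3.5]{LibWil} first, which is a bigger task than the step you budgeted for.
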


We prove Proposition~\ref{N=Gamma} in the next section. Our method of proof will be to reduce to the case $(I,J) = (\mt, \mt)$, where the proof follows from \cite[Proposition 12.26]{GBM} and \cite[Theorem 10.32]{GBM}. First we recall the well-known fact that any bimodule in $\SSBim(J,I)$ is a direct summand of a bimodule restricted from an ordinary Soergel bimodule in $\SSBim(\mt, \mt)$. Thus by Lemma \ref{lem:soequalonsummands} we need only prove the result for bimodules restricted from $\SSBim(\mt, \mt)$. In the next section we study the compatibility of restriction with the functors $\Gamma_C$ and $N_C$, and deduce the result.

\subsection{Restricting from regular Soergel bimodules} \label{ss.restrictionfun}

A regular expression \[w\expr[s_1,\cdots ,s_l]\] of an element $w\in W$ is viewed as the $(\emptyset,\emptyset)$-expression 
\[\{w\}\expr [\emptyset,\{s_1\},\emptyset,\cdots,\{s_l\},\emptyset].\] In this way, we have an embedding of the category of regular Bott-Samelson bimodules $\BSBim$ to $\SBSBim(\emptyset,\emptyset)$. Proposition~\ref{ass} says that this identifies the category of regular Soergel bimodules $\SBim$ with $\SSBim(\emptyset,\emptyset)$.
We make use of (restrictions of) the following functors to import results on $\SBim=\SSBim(\emptyset,\emptyset)$ to our $\SSBim(J,I)$  for general (finitary) $I,J\subset S$.
We fix $I,J\subset S$ and use the shorthand
$\res$ for the restriction functor
\[ \ _{R^I}-_{R^J}:\Bim(\emptyset,\emptyset)\to \Bim(J,I) \]
and $\ind$ for the induction
\[R\otimes_{R^I} - \otimes_{R^J}R:\Bim(J,I)\to \Bim(\emptyset,\emptyset)\]
as well as their restriction to Bott-Samelson or Soergel bimodules. 
In this section we denote by $\pi:W\to W_I\backslash W/ W_J$ the quotient map discussed in Lemma~\ref{lem.posetprojection}.
Lemma~\ref{lem.posetprojection} says in particular that $\pi\inv C$ is downward-closed if $C\subset W_I\backslash W/ W_J$ is downward-closed.

For finitary subsets $I,J\subset S$ and a downward-closed $C\subset W_I\backslash W/ W_J$, we denote by $\SSBim_C(J,I)$ the full additive subcategory of $\SSBim(J,I)$ generated by (shifts of) $B_p$ for $p\in C$.

\begin{lem}\label{BinOrdinarySB}
\begin{enumerate}
    \item The identity functor on $\Bim(J,I)$ (resp. $\SSBim(J,I)$) is naturally isomorphic to 
    a direct summand of $\res \circ \ind$.
    \item If $C\subset W_I\backslash W/W_J$ is downward-closed, then $\res$ and $\ind$ restrict to functors between $\SSBim_{\pi\inv{C}}(\emptyset,\emptyset)$ and $\SSBim_C(J,I)$.
\end{enumerate}
\end{lem}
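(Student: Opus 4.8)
The plan is to prove each of the two claims in Lemma~\ref{BinOrdinarySB} more or less directly from the constructions, using Proposition~\ref{ass} to translate summand membership into statements about cosets.

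\textbf{Claim (1).} First I would recall the standard tensor--hom style computation: for an ordinary Soergel bimodule $B$ viewed in $\Bim(\emptyset,\emptyset)$, there is a natural isomorphism
\[
\res(\ind(M)) = R \otimes_{R^I} M \otimes_{R^J} R
\]
of $(R^I, R^J)$-bimodules, and the point is that $R$ is a \emph{graded free} (hence faithfully flat) module over each $R^I$, with $1 \in R$ occurring in degree $0$ and generating a free rank-one summand of $R$ as an $R^I$-module (this uses generalized Demazure surjectivity/balancedness of the realization). Concretely, splitting off this summand gives a decomposition $R \cong R^I \oplus (\text{shifts})$ as $R^I$-bimodules, and likewise on the other side, so that $\res(\ind(M))$ contains $R^I \otimes_{R^I} M \otimes_{R^J} R^J = M$ as a direct summand, naturally in $M$. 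This establishes that $\id_{\Bim(J,I)}$ is naturally isomorphic to a direct summand of $\res\circ\ind$. Since $\res$ and $\ind$ send Soergel bimodules to Soergel bimodules (the former by \cite[Theorem 7.10]{SingSb}, the latter because inducing a singular Bott--Samelson yields a regular Bott--Samelson up to the identifications above), the same statement holds with $\SSBim$ in place of $\Bim$.

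\textbf{Claim (2).} Here I would argue that $\res$ sends $\SSBim_{\pi^{-1}C}(\emptyset,\emptyset)$ into $\SSBim_C(J,I)$, and dually for $\ind$. For $\res$: it suffices to check on indecomposables, so take $B_x$ for $x \in W$ with $\pi(x) \in C$ (i.e. $x \in \pi^{-1}C$). Choose a reduced expression $\underline{x} = [s_1,\dots,s_\ell]$, viewed as the $(\emptyset,\emptyset)$-expression $[\emptyset,\{s_1\},\emptyset,\dots,\{s_\ell\},\emptyset]$. Then $B_x$ is a summand of $\BS(\underline{x})$, hence $\res(B_x)$ is a summand of $\res(\BS(\underline{x})) = \BS(I_\bullet)$ where $I_\bullet$ is the $(I,J)$-expression obtained by prepending $[I, \emptyset]$ and appending $[\emptyset, J]$ to $\underline{x}$. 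By the last claim in Proposition~\ref{ass}, every summand of $\BS(I_\bullet)$ is a shift of some $B_q$ with $q \le p$, where $p$ is the coset expressed by $I_\bullet$. One checks $p = W_I x W_J = \pi(x)$ (since prepending $[I,\emptyset]$ multiplies by $w_I$ on the left in the $*$-product description \eqref{expressp}, and $w_I * \ma{\{x\}} * w_J$ has the same left $W_I$-- and right $W_J$--coset as $x$). Combined with Lemma~\ref{lem.posetprojection}(\ref{item.po}) or directly, $q \le \pi(x)$ with $\pi(x) \in C$ and $C$ downward-closed gives $q \in C$, so $\res(B_x) \in \SSBim_C(J,I)$. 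For $\ind$: take $B_p$ with $p \in C$, pick a reduced $(I,J)$-expression $I_\bullet \expr p$, so $B_p$ is a summand of $\BS(I_\bullet)$ and $\ind(B_p)$ is a summand of $\ind(\BS(I_\bullet))$, which is the regular Bott--Samelson $\BS(\underline{w})$ for a reduced word of the appropriate element; by the regular analogue of Proposition~\ref{ass} (the $(\emptyset,\emptyset)$ case), its summands are shifts of $B_x$ for $x$ below that element, and one identifies that these $x$ all satisfy $\pi(x) \le p$, hence $x \in \pi^{-1}C$.

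\textbf{Main obstacle.} The genuinely delicate point is pinning down exactly which cosets $x$ arise as indices of summands of $\ind(B_p)$ and verifying $\pi(x) \in \pi^{-1}C$; this requires identifying $\ind$ of a singular Bott--Samelson with a regular Bott--Samelson and then reading off the supported elements via the character formula, i.e. via \eqref{eq:chbq} and the Hecke-algebroid multiplication \cite[Proposition 2.8]{SingSb}. Everything else is bookkeeping with reduced expressions, the $*$-product description of $\expr$, and downward-closedness together with Lemma~\ref{lem.posetprojection}. I would organize the proof so that the character/support computation is done once, cleanly, and then invoked on both sides.
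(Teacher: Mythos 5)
Your proposal is correct and takes essentially the same route as the paper: part (1) rests on the natural splitting of the inclusion $R^I\subset R$ (the paper deduces this splitting from the Frobenius-extension structure given by generalized Demazure surjectivity rather than from graded freeness, but the content is the same realization of the identity as a summand of $\res\circ\ind$), and part (2) is the same argument of concatenating with $[[I\supset\emptyset]]$, resp.\ $[[\emptyset\subset I]]$, identifying the coset expressed, and invoking the last claim of Proposition~\ref{ass} together with downward-closedness and Lemma~\ref{lem.posetprojection} (the paper phrases the $\res$ direction via $\Term = \{\leq\pi(w)\}$ from Theorem~\ref{newbruhatthm}, which is equivalent). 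One correction: $\ind(\BS(I_\bullet))$ is the \emph{singular} Bott--Samelson $\BS([[\emptyset\subset I]]\circ I_\bullet\circ[[J\supset\emptyset]])$, which is in general not a regular Bott--Samelson for a reduced word of $\ma{p}$ (e.g.\ $R\otimes_{R^I}R$ for $|I|\geq 2$); but since this is a reduced $(\emptyset,\emptyset)$-expression for the coset $\{\ma{p}\}\in\pi\inv C$, Proposition~\ref{ass} applies to it verbatim, so the character computation you flag as the ``main obstacle'' is unnecessary --- this direct use of Proposition~\ref{ass} is exactly how the paper argues.
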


\begin{proof}
By our assumption of generalized Demazure surjectivity, the inclusion $R^I \subset R$ is a Frobenius extension (see \cite[Section 4.1]{EKLP1}). Therefore the map $R^I \to R$ is split as an $(R^I,R^I)$-bimodule map (and the same for $R^J$).
Since $\res \circ \ind$ is given by the tensor product $\!_{R^I}R\otimes_{R^I}-\otimes_{R^J} R_{R^J}$, the natural inclusion of the functor $\!_{R^I}R^I\otimes_{R^I}-\otimes_{R^J} R^J_{R^J}$ is a direct summand. The first statement follows from the natural isomorphism between the latter functor and the identity functor on $\Bim(J,I)$.

Now fix $C$, a downward-closed subset in $W_I\backslash W/W_J$, and consider the second statement. That $\ind$ restricts follows from Proposition~\ref{ass}, since if $I_\bullet\expr p\in C$ is a reduced $(I,J)$-expression then 
\[ K_\bullet:=[[\emptyset \subset I]]\circ I_\bullet \circ [[J\supset \emptyset]]\expr \ma{p}\in \pi\inv{C}\] is reduced and thus $\ind(\BS(I_\bullet))\cong\BS(K_\bullet)$ belongs to $\SSBim_{\pi\inv{C}}(\emptyset,\emptyset)$.

We now show that $\res$ restricts. By Proposition~\ref{ass} again, it is enough to show that $\res(\BS(K_\bullet))$, where $K_\bullet\expr \{w\}$ is a reduced $(\emptyset,\emptyset)$-expression for $w\in \pi\inv{C}$, belongs to $\SSBim_C(J,I)$. 
By Proposition~\ref{ass}, the latter reduces to showing that every subordinate path of the expression 
\[I_\bullet:=[[I\supset \emptyset]]\circ K_\bullet\circ[[\emptyset\subset J]]\]
terminates in $C$.
But since 
\[I_\bullet \expr W_I w W_J=\pi(w)\]
we have by Theorem~\ref{newbruhatthm}
\[\Term(I_\bullet)= \{ \leq \pi(w) \}\subset C\]
as desired.
\end{proof}

As the last ingredient for Proposition~\ref{N=Gamma}, here is \cite[Lemma 4.14.(2)]{SingSb} which says that the support filtration from Definition~\ref{def.Gamma} is compatible with restriction.

\begin{lem}\label{ResGamma}
For $C\subset W_I\backslash W/W_J$ we have 
\begin{equation*}
\Gamma_C \circ \res = \res\circ \Gamma_{\pi\inv C}.
\end{equation*}
\end{lem}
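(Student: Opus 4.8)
\textbf{Proof plan for Lemma~\ref{ResGamma}.} The statement to prove is the equality of functors $\Gamma_C \circ \res = \res \circ \Gamma_{\pi\inv C}$ on $\Bim(\emptyset,\emptyset)$. Since both sides are subfunctors of $\res$, it suffices to check, for each $M \in \Bim(\emptyset,\emptyset)$, that the two submodules $\Gamma_C(\res M)$ and $\res(\Gamma_{\pi\inv C}(M))$ of the underlying module coincide. The plan is to unwind Definition~\ref{def.Gamma} on both sides and observe that the support conditions being imposed are literally the same.

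First I would record the geometric picture. A bimodule $M \in \Bim(\emptyset,\emptyset)$ is a quasi-coherent sheaf on $V/W \times V/W$ (in the notation of Definition~\ref{def.Gamma}, with $W_\emptyset = \{e\}$ this is just $V \times V$), and $\Gamma_{\pi\inv C}(M)$ consists of those sections supported on the image of $\{(xv, v) \mid x \in \pi\inv(\pi\inv C), v \in V\}$; but $\pi\inv C$ is already a subset of $W$, so this is the image of $\{(xv,v) \mid x \in \pi\inv C\}$ in $V \times V$. On the other hand, $\res M$ is the same underlying abelian group, now regarded as a sheaf on $V/W_I \times V/W_J$ via the finite projection $q \co V/W \times V/W \to V/W_I \times V/W_J$ (well, more precisely via $V \times V \to V/W_I \times V/W_J$, since $M$ is actually a sheaf on $V \times V$). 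Then $\Gamma_C(\res M)$ consists of sections supported on the image of $\{(yv,v) \mid y \in \pi\inv C \subset W, v \in V\}$ under $V \times V \to V/W_I \times V/W_J$ — and here $\pi\inv C$ denotes exactly the same subset of $W$ appearing in the previous sentence.

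The heart of the argument is therefore: a section $m \in M$ is supported (as a sheaf on $V \times V$) on the closed set $Z := \overline{\{(xv,v) \mid x \in \pi\inv C\}}$ if and only if its image in $M$ viewed over $V/W_I \times V/W_J$ is supported on $q(Z)$, where $q \co V \times V \to V/W_I \times V/W_J$. One direction is immediate: support can only shrink (or stay put, up to closure) under pushforward along $q$, so $Z$-supported sections are $q(Z)$-supported. For the converse one uses that $q$ is a finite morphism: the preimage $q\inv(q(Z))$ is a finite union of $W_I \times W_J$-translates of $Z$, and one checks that $Z$ is already stable under this action — indeed $(w_I x w_J) v$ ranges over $W_I \pi\inv C\, W_J = \pi\inv C$ since $\pi\inv C$ is a union of $(W_I, W_J)$-double cosets — so $q\inv(q(Z)) = Z$ set-theoretically, and a section whose image is $q(Z)$-supported must be $Z$-supported. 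Alternatively, and perhaps more cleanly, one can just cite \cite[Lemma 4.14.(2)]{SingSb} directly, which is precisely this statement; the lemma is quoted for convenience here and the one-line proof is "support conditions pull back along the finite map $V/W \to V/W_I \times V/W_J$, and the relevant support locus is $W_I \times W_J$-invariant by Lemma~\ref{lem.posetprojection}."

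\textbf{Main obstacle.} The only subtle point is making sure the two descriptions of the ambient support locus genuinely match — i.e. that $\pi\inv C \subset W$ (the $(W_I,W_J)$-saturated subset) is the correct thing on both sides, and that no collapsing happens when passing from $V \times V$ down to $V/W_I \times V/W_J$ beyond what is accounted for by the $W_I \times W_J$ action. This is exactly where one needs $\pi\inv C$ to be a union of double cosets, which is automatic, together with Lemma~\ref{lem.posetprojection}\eqref{item.inversepo} to know this set is still downward-closed (so that $\Gamma_{\pi\inv C}$ and the functor $N_{\pi\inv C}$ of the next section are defined on the right category). Everything else is a routine commutative-algebra/sheaf-support manipulation, and since the statement is quoted verbatim from \cite{SingSb}, I would keep the proof to the two-sentence citation given in the excerpt rather than reproving it.
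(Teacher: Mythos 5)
Your proposal is correct and lands on essentially the same approach as the paper: the paper offers no independent proof of Lemma~\ref{ResGamma}, stating it purely as a quotation of \cite[Lemma 4.14.(2)]{SingSb}, which is exactly the fallback you choose. Your sketched direct verification (supports transform as expected under the finite map $V\times V\to V/W_I\times V/W_J$, and the graph locus is $W_I\times W_J$-stable because $\pi\inv C$ is a union of double cosets) is also sound and would serve as a self-contained proof if one were wanted.
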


\begin{proof}[Proof of Proposition \ref{N=Gamma}]
 By Lemma~\ref{lem:soequalonsummands} and Lemma~\ref{BinOrdinarySB}(1), we may restrict our attention to objects of the form $\res B\in \SSBim(J,I)$, where $B\in\SBSBim(\emptyset,\emptyset)$.

By \cite[Proposition 12.26]{GBM} and \cite[Theorem 10.32]{GBM}, the functors $\{N_D\}$, for downward-closed finite subsets $D\subset W$, induces a $\nabla$-filtration on $B\in\SBSBim(\emptyset,\emptyset)$. So does $\{\Gamma_D\}$ by Proposition~\ref{supportfiltr}, and $N_D \subset \Gamma_D$ by Lemma~\ref{NsubGamma}. But an inclusion of two functorial $\nabla$-filtrations must be an equality, since the associated graded of the filtrations have the same size. (To show equality for a given $D$, we can choose an enumeration of the Bruhat order compatible where $D = \{p_j \mid j \le i\}$ for some $i$.)

We thus get the second equality in
\[\Gamma_C(\res B)=\res(\Gamma_{\pi\inv C} B) = \res (N_{\pi\inv C}B)\subset N_C(\res B),\] 
where we use Lemma~\ref{ResGamma} in the first equality and Lemma~\ref{BinOrdinarySB}(2) in the final containment. Lemma~\ref{NsubGamma} then completes the proof.
\end{proof}

\subsection{Filtrations on morphisms and resolutions: the ordinary setting} \label{ss.nowformorphisms}

 In this section, we translate filtrations on ordinary Soergel bimodules to filtrations on morphism spaces, by reformulating a result of \cite{LibWil} on $\SBim=\SSBim(\emptyset,\emptyset)$. In the next section we deduce results about singular Soergel bimodules.

We use the standard notation for ordinary Soergel bimodules as in \cite{GBM,LibWil} etc. 
Following \cite[Definition 1]{LibWil}, we say that a complex $E_\bullet$ of $(R,R)$-bimodules is \emph{$\nabla$-exact} if each $E_j$ has $\nabla$-filtration, and for each $x\in W$ the induced complex $(\Gamma_{\leq x}/\Gamma_{< x})(E_\bullet)$ is exact (see also \cite[Remark 5(5)]{LibWil}).  

\begin{lem}\label{lem.tiltresol}
Let $C\subset W$ be finite and downward-closed and let $M$ be a graded $(R,R)$-bimodule with a $\nabla$-filtration. If $M=\Gamma_C M$, then there exists a finite $\nabla$-exact complex 
\begin{equation}\label{eq.tiltresol}
0\to B_k\to \cdots \to B_1\to B_0\to M\to 0    
\end{equation}
where $B_j\in \SBim_C$. 
\end{lem}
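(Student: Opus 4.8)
The goal is to build a finite resolution of $M$ by objects of $\SBim_C$ which is $\nabla$-exact. The natural strategy is an iterative "horseshoe"-style construction: repeatedly cover a $\nabla$-filtered module supported on $C$ by a Bott--Samelson-type object in $\SBim_C$, and control the kernel so that the process terminates. First I would recall from \cite[Definition 1, Remark 5]{LibWil} (and the surrounding discussion in \cite{GBM}) that the category $\SBim_C$ of Soergel bimodules supported on $C$ contains, for each $x \in C$, an object whose top $\nabla$-layer is $\nabla_x$ with multiplicity one --- concretely, a suitable summand of a Bott--Samelson bimodule $\BS(\underline{x})$ for a reduced expression, truncated by $\Gamma_C$. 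These play the role of the ``standard cover'' of $\nabla_x$.

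\textbf{Key steps.} (1) Choose $x \in C$ maximal such that $\nabla_x$ appears in the $\nabla$-filtration of $M$. Using the functoriality of $\{\Gamma_D\}$ (Proposition~\ref{supportfiltr} and its ordinary analogue) one gets a surjection $\Gamma_{\le x}(M) \twoheadrightarrow (\Gamma_{\le x}/\Gamma_{<x})(M)$, a direct sum of shifts of $\nabla_x$; lift this, summand by summand, through a map from a direct sum $P_0$ of shifts of the standard cover of $\nabla_x$. Because $\End(\nabla_x) = \End(R_x)$ is a graded local ring (Lemma~\ref{lem:endstd}), such a lift exists and can be arranged to hit a full set of generators, so $P_0 \to M$ is surjective. (2) Show the kernel $M' := \ker(P_0 \to M)$ again has a $\nabla$-filtration with $M' = \Gamma_C(M')$: $\nabla$-filteredness of kernels of surjections between $\nabla$-filtered modules follows from the standard $\mathrm{Ext}^1$-vanishing $\mathrm{Ext}^1((\text{proj. cover of }\nabla_y), \nabla_z)=0$ for $z \not> y$, exactly as in the tilting-module formalism; and $M' \subset P_0$ with $P_0 = \Gamma_C(P_0)$ forces $M' = \Gamma_C(M')$. (3) Check that the $\nabla$-character (i.e. the multiset of $\nabla$-subquotients, counted in $\Z[v^{\pm 1}]$) strictly decreases in an appropriate well-founded sense --- either the multiplicity of $\nabla_x$ drops, or after finitely many steps the top coset $x$ drops in the Bruhat order --- so the process terminates after finitely many steps, say $k$, yielding $0 \to B_k \to \cdots \to B_0 \to M \to 0$ with all $B_j \in \SBim_C$. (4) Finally verify $\nabla$-exactness: apply $(\Gamma_{\le x}/\Gamma_{<x})$ to the complex; since each $B_j$ and $M$ are $\nabla$-filtered and each short exact sequence $0 \to M_{j+1} \to B_j \to M_j \to 0$ stays short exact under this functor (this is precisely the content of $M_j, M_{j+1}, B_j$ being $\nabla$-filtered with compatible filtrations), the long complex splices into an exact one.

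\textbf{Main obstacle.} The delicate point is Step (2): proving that the kernel of a surjection $P_0 \to M$ of $\nabla$-filtered modules supported on $C$ is itself $\nabla$-filtered. In the finite-dimensional highest-weight / tilting setting this is automatic from $\mathrm{Ext}^1$-orthogonality, but here $R$ is infinite-dimensional and the standard bimodules $R_p$ are not cyclic in general (as the $S_4$ example shows), so one must be careful that the relevant $\mathrm{Ext}^1$-vanishing --- between the standard cover of $\nabla_x$ and the standards $\nabla_y$ with $y \not> x$ --- genuinely holds, and that a full generating set of the top layer can be lifted simultaneously. I expect this to reduce to the graded-local structure of $\End(R_p)$ (Lemma~\ref{lem:endstd}) together with properties of the support filtration already recorded, and plausibly one can simply cite the corresponding statement from \cite{LibWil} rather than reprove it; indeed the cleanest route may be to invoke \cite[Definition 1 and the main results]{LibWil} directly, since that paper establishes exactly the existence of such $\nabla$-exact resolutions for $\SBim$, and the only new content here is the truncation by $\Gamma_C$, which is harmless because $\Gamma_C$ is a subfunctor of the identity compatible with $\nabla$-filtrations.
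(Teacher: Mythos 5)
Your proposal has the right ingredients in the background (reliance on \cite{LibWil}, a horseshoe-style assembly, $\nabla$-exactness checked layer by layer), but the construction you actually outline has genuine gaps that the paper's argument is specifically designed to avoid. First, your Step (1) does not produce a surjection: covering only the top layer $(\Gamma_{\le x}/\Gamma_{<x})(M)$ by shifts of a cover of $\nabla_x$ says nothing about the lower layers of $M$, so $P_0\to M$ need not be onto (and the ``standard cover'' you describe, a $\Gamma_C$-truncation of a Bott--Samelson summand, is in general not an object of $\SBim_C$ at all; the correct cover of $\nabla_x$ is simply $\BS(\underline{x})$ via the augmentation $g_x$ of the Rouquier complex). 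Second, the point you flag as the ``main obstacle'' --- that the kernel of your surjection is again $\nabla$-filtered and supported on $C$ --- is exactly the statement you would need to prove, and you give no argument beyond an analogy with tilting theory; graded locality of $\End(R_p)$ does not yield the required $\mathrm{Ext}^1$-orthogonality here, and citing \cite{LibWil} ``directly'' does not work either, since that paper provides $\nabla$-exact resolutions of the standard bimodules $R_x$ (the augmented Rouquier complexes), not of arbitrary $\nabla$-filtered modules supported on $C$. Third, your termination argument in Step (3) is only heuristic: in this graded, infinite-rank setting nothing in your scheme bounds the length of the iteration, whereas the lemma asserts a finite complex.

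The paper's proof sidesteps all three issues by inducting on $|C|$ rather than iterating covers: writing $C'=C\sqcup\{x\}$ with $x$ maximal, the $\nabla$-filtration gives a short exact sequence $0\to\Gamma_C M\to M\to R_x^{\oplus m}\to 0$; the induction hypothesis resolves $\Gamma_C M$, the finite augmented Rouquier complex of \cite{LibWil} resolves $R_x(\ell(x))$ $\nabla$-exactly by terms in $\SBim_{C'}$, and the horseshoe lemma (with lifts supplied by \cite[Proposition 3.5]{LibWil}, i.e.\ exactness of $\Hom(B,-)$ on $\nabla$-exact complexes) splices the two into a resolution of $M$, whose $\nabla$-exactness is checked by applying $\Gamma_{\le y}/\Gamma_{<y}$ to the whole diagram. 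In particular, no claim that kernels of surjections of $\nabla$-filtered modules are $\nabla$-filtered is ever needed, and finiteness is automatic from the finiteness of $|C|$ and of the Rouquier complex. To repair your write-up you would either have to prove the kernel statement and a termination bound from scratch, or restructure the argument along these inductive lines.
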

\begin{proof}
    We prove the statement by induction on $|C|$. Suppose the claim is true for $C\subset W$ and let $M=\Gamma_{C\sqcup x}M$ where $x\in W\setminus C$ and $C \sqcup x$ is downward-closed.
Then we have an exact sequence
\begin{equation}\label{eq.M}
    0\to \Gamma_C M\to M\xrightarrow{} R_x^{\oplus m}\to 0,
\end{equation}
for some $m\in\mathbb N[v,v\inv]$,
since $M$ is $\nabla$-filtered.
Here $R_x^{\oplus m}$ is shorthand for the shifted sum $\oplus_i R_x(i)^{\oplus m_i}$ where\footnote{Typical notation would have $m = \sum_i m_iv^{-i}$, an element of $\Z[v,v^{-1}]$.} $m_i\in\mathbb N$. By induction, $\Gamma_C M$ admits a resolution of the form in \eqref{eq.tiltresol}, and we let $B_i'$ be the various chain objects.

Let $\BS(x)$ abusively denote the Bott-Samelson bimodule for a reduced expression for $x$. Let
\begin{equation}\label{eq.rou}
0\to E_{\ell(x)}\to \cdots \to E_1\to E_0=\BS(x)\xrightarrow{g_x} R_x(\ell(x))    
\end{equation}
be the augmented Rouquier complex for $\BS(x)$ from \cite{LibWil}. That is, ignoring the last term, $E_{\bullet}$ is the tensor product of $[R(-1) \to B_s]$ as $s$ varies along the chosen reduced expression for $x$. Consequently, $E_i\in \SBim_C$ for $i>0$; see \cite[\S 2.3]{LibWil}.

Our construction of \eqref{eq.tiltresol} follows the analogous construction of a projective resolution of a short exact sequence in an abelian category, using the horseshoe lemma. Namely, $B_i:=B'_i\oplus E_i^{\oplus m}$ and the differentials are constructed inductively as explained below.

We have the solid part of the following commutative diagram of degree 0 morphisms in graded $(R,R)$-bimodules. The first row is \eqref{eq.M}, the other rows are split short exact sequences (with inclusion and projection maps), the left column $B'_\bullet$ is a $\nabla$-exact complex given by the induction hypothesis, and the right column is a direct sum of shifts of \eqref{eq.rou}.
    \begin{equation}
    \begin{tikzcd}\label{eq.cd}
      &0&0&0\\
        0\arrow[r] & \Gamma_C M\arrow[r]\arrow[u] &M\arrow[r,"\phi"]\arrow[u] &R_x^{\oplus m}\arrow[r]\arrow[u]& 0\\
        0\arrow[r] & B'_0 \arrow[r]\arrow[u,"g'"] &B'_0\oplus  \BS(x)^{\oplus m}\arrow[r]\arrow[u,dashed,"g"] &\BS(x)^{\oplus m}\arrow[r]\arrow[u,"g_x^{\oplus m}"]\arrow[ul,dotted,"f_{-1}"]& 0\\
        0\arrow[r] & B'_1 \arrow[r]\arrow[u,"d_{B'}"] &B'_1\oplus E_1^{\oplus m}\arrow[r]\arrow[u,dashed,"d_B"] &E_1^{\oplus m}\arrow[r]\arrow[u,"d_E"]\arrow[dotted]{ull}[near end]{f_{0}}& 0\\
        0\arrow[r]&B_2'
        \arrow[r]\arrow[u,"d_{B'}"] &B_2'\oplus E_2^{\oplus m}
        \arrow[r]\arrow[u,dashed,"d_B"] &E_2^{\oplus m}
        \arrow[u,"d_E"]\arrow[dotted]{ull}[near end]{f_{1}}\arrow[r]&0\\
        &\vdots\arrow[u,"d_{B'}"]&\vdots\arrow[u,dashed,"d_{B}"]&\vdots\arrow[u,"d_{E}"]
    \end{tikzcd}
    \end{equation}

    The first row is $\nabla$-exact by \cite[Remark 5(3)]{LibWil}; every other row is $\nabla$-exact since it is split and has $\nabla$-filtered terms;
    the right column is $\nabla$-exact by \cite[Corollary 3.9]{LibWil}.
    
    By \cite[Proposition 3.5]{LibWil}, the first row being $\nabla$-exact implies that $g_x^{\oplus m}$ lifts to some $f_{-1}$ as pictured. 

    The map $\bar h:=f_{-1}\circ d_E$ 
satisfies 
\[\phi\circ \bar h= \phi\circ f_{-1}\circ d_E=g_x^{\oplus m}\circ d_E = 0\] and thus restricts to a map $h\in \Hom^0(E_1^{\oplus m},\Gamma_CM)$. 
    By the $\nabla$-exactness of the first column
    \[g'\circ -:\Hom^0(E_1^{\oplus m},B_0')\to \Hom^0(E_1^{\oplus m},\Gamma_CM)\]
    is surjective. Thus there exists $f_0:E_1^{\oplus m}\rightarrow B_0$ with $h=g'\circ f_0$. 
    
    The maps $f_{i}:E_{i+1}^{\oplus m}\to B'_{i}$, for $i>0$ are constructed inductively as follows.
    The map $f_{i-1}\circ d_E$ is in the kernel of    
    \[d_{B'}\circ -:\Hom^0(E_{i+1}^{\oplus m},B'_{i})\xrightarrow{}\Hom^0(E_{i+1}^{\oplus m},B'_{i-1})\]
    and thus, by \cite[Proposition 3.5]{LibWil} and $\nabla$-exactness, has a lift which we call $f_i$, i.e., $d_{B'}\circ f_i= f_{i-1}\circ d_E$.
    
    Now we set $g=g'-f_{-1}$ and the other dashed maps $d_B=d_{B'}+d_E+(-1)^i f_i$. Then one can check that $d_B^2=0$. This completes the diagram \eqref{eq.cd}.
    
    Applying $\Gamma_{\leq y}/\Gamma_{< y}$, for each $y\in W$, to the diagram \eqref{eq.cd} we have a commutative diagram of the same form where the solid part remains exact. Thus the dashed part also remains exact and provides a desired complex \eqref{eq.tiltresol}.
\end{proof}

Recall that $\Hom_C(B,B')$ denotes the subspace of morphisms spanned by morphisms which factor through an object in $\SBim_C$.

\begin{lem}\label{lem.LW}
    Let $B,B'\in \SBim$ and let $C\subset W$ be a finite downward-closed set. A morphism $f:B\to B'$ has $\im f\subset \Gamma_C B'$ if and only if $f\in \Hom_C(B,B')$.
\end{lem}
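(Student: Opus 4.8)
The statement is an ``iff'' about a morphism $f:B\to B'$ in $\SBim$: its image lands in $\Gamma_C B'$ precisely when $f$ factors (as a sum) through objects of $\SBim_C$. The easy direction is $\Leftarrow$: if $f = h\circ g$ with $g: B\to E$, $h: E\to B'$ and $E\in\SBim_C$, then $\im f\subset h(E)$, and since $E=\Gamma_C E$ (as $E$ is a sum of shifts of $B_x$ with $x\in C$, and $\Gamma_C$ is a subfunctor of the identity which is the whole thing on such bimodules by Proposition~\ref{supportfiltr}/Proposition~\ref{N=Gamma}), left-exactness/functoriality of $\Gamma_C$ gives $h(\Gamma_C E)\subset \Gamma_C B'$; summing over the finitely many factoring terms gives $\im f\subset \Gamma_C B'$. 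This uses only that $\{\Gamma_C\}$ is a filtration of the identity functor and that $\Gamma_C B_x = B_x$ for $x\in C$, i.e. exactly the content already recorded in Lemma~\ref{NsubGamma} and Proposition~\ref{N=Gamma}.

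For the hard direction $\Rightarrow$, suppose $\im f\subset \Gamma_C B'$. The plan is to use the $\nabla$-exact resolution produced by Lemma~\ref{lem.tiltresol}. Since $B'\in\SBim$ has a $\nabla$-filtration, so does the submodule $M:=\Gamma_C B'$ (its $\nabla$-subquotients are those of $B'$ supported on $C$), and $M=\Gamma_C M$; thus Lemma~\ref{lem.tiltresol} yields a finite $\nabla$-exact complex $0\to B_k\to\cdots\to B_0\to M\to 0$ with all $B_j\in\SBim_C$. Now $f$ factors as $B\xrightarrow{\bar f} M \hookrightarrow B'$. The point is to lift $\bar f: B\to M$ through the augmentation $B_0\twoheadrightarrow M$. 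This is a lifting problem of exactly the type handled by \cite[Proposition 3.5]{LibWil}: since $B$ itself is $\nabla$-filtered and the complex $B_\bullet \to M$ is $\nabla$-exact, the map $\Hom^0(B, B_0)\to \Hom^0(B,M)$ induced by the augmentation is surjective. (More precisely one applies the Hom$(B,-)$ long-exact-sequence/lifting criterion of \cite{LibWil} to the $\nabla$-exact complex, using that $B$ is a summand of a Bott--Samelson, hence $\nabla$-filtered, so $\Ext^{>0}$-type obstructions against $\nabla$-filtered modules vanish.) Therefore $\bar f$ lifts to $g: B\to B_0$ with $\bar f = (\text{aug})\circ g$, and composing with $B_0\hookrightarrow$ nothing — rather, $f = \iota\circ\bar f = \iota\circ(\text{aug})\circ g$ where $\iota\circ(\text{aug}): B_0\to B'$ — exhibits $f$ as factoring through $B_0\in\SBim_C$. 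Hence $f\in\Hom_C(B,B')$.

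\textbf{Main obstacle.} The crux is the surjectivity of $\Hom^0(B,B_0)\to\Hom^0(B,M)$, i.e. that the lifting obstruction vanishes. This is where the $\nabla$-exactness (not mere exactness) of the resolution, together with the fact that $B$ is $\nabla$-filtered, is essential: it is precisely the mechanism behind \cite[Proposition 3.5]{LibWil} (the same tool already invoked inside the proof of Lemma~\ref{lem.tiltresol}), which says that morphisms out of a $\nabla$-filtered bimodule lift along $\nabla$-exact surjections. I would double-check that the hypotheses of that proposition are met here — namely that $\Gamma_{\le x}/\Gamma_{<x}$ applied to $B_\bullet\to M\to 0$ stays exact at the right end for every $x\in W$ (automatic since the resolution is $\nabla$-exact and $M=\Gamma_C M$ so subquotients for $x\notin C$ vanish on both sides) — and that $B\in\SBim$ guarantees $B$ is $\nabla$-filtered (true since $B$ is a summand of a Bott--Samelson bimodule, by \cite[Theorem 6.4]{SingSb} and closure of $\nabla$-filtered modules under summands in the ordinary setting, cf. Lemma~\ref{lem:functorialtosummand}). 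Everything else is bookkeeping: reducing the ``finite sum of factoring morphisms'' in the definition of $\Hom_C$ to a single factorization through $B_0$, which is immediate since $B_0$ alone receives the lift.
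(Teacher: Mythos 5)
Your proof is correct and follows essentially the same route as the paper: the easy direction is the content of Lemma~\ref{NsubGamma}/Proposition~\ref{N=Gamma}, and for the converse the paper likewise restricts $f$ to $\Gamma_C B'$ (which is $\nabla$-filtered and satisfies $\Gamma_C(\Gamma_C B')=\Gamma_C B'$), takes the $\nabla$-exact resolution in $\SBim_C$ from Lemma~\ref{lem.tiltresol}, and lifts through the augmentation $E_0\twoheadrightarrow\Gamma_C B'$ using \cite[Proposition 3.5]{LibWil}, so that $f$ factors through $E_0\in\SBim_C$.
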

\begin{proof}
If $f \in \Hom_C(B,B')$ then $\im f \subset N_C B'$ by definition. So $\im f \subset \Gamma_C B'$ by Proposition~\ref{N=Gamma}.

Now let $f$ be such that $\im f \subset \Gamma_C B'$. Let $f':B\to \Gamma_C B'$ be the restriction.
Since $B'$ has a $\nabla$-filtration, $\Gamma_C B'$ also has a $\nabla$-filtration. 
By Lemma~\ref{lem.tiltresol}, $\Gamma_C B'$ has a $\nabla$-exact resolution $E_\bullet$ in $\SBim_C$.
Now \cite[Proposition 3.5]{LibWil} says that $f'$ factors through the epimorphism $E_0\to \Gamma_C B'$. Thus $f$ also factors through $E_0\in\SBim_C$, that is, $f\in\Hom_C(B,B')$.  
\end{proof}

\subsection{Filtrations on morphisms and the one-tensor: the singular setting}\label{ss.application2}

\begin{thm}\label{lem.intermsBS}
Let $B,B'\in \SSBim$. Then we have
    \[\Hom_{<p}(B,B')=\Hom(B,\Gamma_{<p}B')\]
    where we view the right hand side as the submodule of $\Hom(B,B')$ consisting of the morphisms whose image is in $\Gamma_{<p}B'$.
\end{thm}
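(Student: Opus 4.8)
The plan is to reduce the statement to the regular (non-singular) case already handled by \Cref{lem.LW}, using the restriction/induction adjunction from \Cref{BinOrdinarySB} together with the compatibility between the support filtration and restriction (\Cref{ResGamma}). First I would dispatch the easy inclusion $\Hom_{<p}(B,B')\subset\Hom(B,\Gamma_{<p}B')$: if $f$ factors through $\BS(M_\bullet)$ for a reduced expression $M_\bullet\expr q$ with $q<p$, then by \Cref{ass} it factors through a sum of $B_q$'s with $q<p$, each of which equals $\Gamma_{<p}(B_q)$, so the image of $f$ lies in $\Gamma_{<p}B'$; equivalently this is $N_{<p}B'\subset\Gamma_{<p}B'$ by \Cref{N=Gamma} and \Cref{NsubGamma}.

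For the reverse inclusion, let $f\colon B\to B'$ have image in $\Gamma_{<p}B'$. Write $C=\{<p\}$, a downward-closed finite subset of $W_I\backslash W/W_J$. Apply the induction functor $\ind$ to get $\ind f\colon \ind B\to\ind B'$. By \Cref{ResGamma} (or rather its consequence, since $\ind$ behaves symmetrically), $\ind$ carries $\Gamma_{C}B'$ into $\Gamma_{\pi\inv C}(\ind B')$, so $\ind f$ has image in $\Gamma_{\pi\inv C}(\ind B')$. Since $\ind B,\ind B'\in\SBim=\SSBim(\emptyset,\emptyset)$ and $\pi\inv C$ is finite and downward-closed (by \Cref{lem.posetprojection}), \Cref{lem.LW} applies: $\ind f\in\Hom_{\pi\inv C}(\ind B,\ind B')$, i.e.\ $\ind f=\sum_j g_j$ where each $g_j$ factors through some $\BS(K_\bullet^{(j)})$ with $K_\bullet^{(j)}$ a reduced $(\emptyset,\emptyset)$-expression for $y_j\in\pi\inv C$. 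Now recover $f$ from $\ind f$ via the splitting of \Cref{BinOrdinarySB}(1): there are natural maps $\incl\colon B\to\res\ind B$ and $\proj\colon\res\ind B'\to B'$ with $\proj\circ\res\ind(-)\circ\incl=\mathrm{id}$, so
\[ f=\proj\circ\res(\ind f)\circ\incl=\sum_j \proj\circ\res(g_j)\circ\incl. \]
Each summand factors through $\res\BS(K_\bullet^{(j)})$, which by \Cref{BinOrdinarySB}(2) (applied with $C$, using $y_j\in\pi\inv C$) lies in $\SSBim_{C}(J,I)=\SSBim_{<p}(J,I)$; more concretely, by \Cref{ass} and \Cref{newbruhatthm} every indecomposable summand of $\res\BS(K_\bullet^{(j)})$ is a shift of some $B_q$ with $q\le\pi(y_j)<p$. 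Hence $\res(g_j)$, and therefore $\proj\circ\res(g_j)\circ\incl$, factors through a Bott--Samelson bimodule for a reduced expression of a coset strictly below $p$, so $f\in\Hom_{<p}(B,B')$.

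The main obstacle, and the step requiring the most care, is the bookkeeping around \Cref{BinOrdinarySB}(1): the identity functor on $\Bim(J,I)$ is only isomorphic to a direct summand of $\res\circ\ind$ (not literally a sub/quotient), so one must be careful that the factorizations of $\res(g_j)$ through objects of $\SSBim_{<p}(J,I)$ survive pre- and post-composition with $\incl$ and $\proj$. This is fine since $\SSBim_{<p}(J,I)$ is closed under the relevant operations (it is an additive subcategory and factorization through it is preserved by arbitrary pre/post-composition), but it is the one place where the argument is not purely formal and deserves an explicit sentence. A secondary point to verify is that $\pi\inv C$ is genuinely finite: this holds because each fiber $\pi\inv(q)$ is finite (the parabolic subgroups are finitary) and $C$ is finite.
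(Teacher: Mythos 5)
Your proposal is correct and takes essentially the same route as the paper's proof: the easy inclusion via $N_{<p}\subset\Gamma_{<p}$ and \Cref{N=Gamma}, then inducing to $\SSBim(\emptyset,\emptyset)$, applying \Cref{lem.LW} to $\ind f$, and restricting back through the splitting of \Cref{BinOrdinarySB}. The only cosmetic differences are that the paper factors through the indecomposables $B_{y_j}$ rather than Bott--Samelson bimodules for reduced expressions of the $y_j$, and it simply asserts $\ind \Gamma_{<p}B'=\Gamma_{\pi\inv\{<p\}}\ind B'$ where you appeal to a ``symmetric'' analogue of \Cref{ResGamma} (which as stated concerns $\res$, not $\ind$) --- both versions leave that support compatibility at the same level of detail.
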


\begin{proof}
If $q<p$, the image of any morphism $B_q\rightarrow B'$ lies in $N_{<p}B'$ by definition. The inclusion $\Hom_{<p}(B,B')\subset\Hom(B,\Gamma_{<p}B')$ follows by Proposition~\ref{N=Gamma}. 

To show the other inclusion, we let $f:B\to \Gamma_{<p}B'$ and want to write $f$ as a sum of morphisms factoring through objects in $\SSBim_{<p}(J,I)$. By Lemma~\ref{BinOrdinarySB}, the identity functor on $\SSBim(J,I)$ is a summand of $\res \ind$, which implies that
\[ f = \proj \circ (\res \ind f) \circ \incl.\]
Here, $\proj$ and $\incl$ are the projection and inclusion for the identity functor inside $\res \ind$. If we can prove that $\res \ind f$ factors through $\SSBim_{<p}(J,I)$, then so will $f$.

Consider the morphism
\[\ind f :\ind B\to \ind \Gamma_{<p} B' = \Gamma_{\pi\inv\{<p\}}\ind B'.\]
By Lemma~\ref{lem.LW}, $\ind f = \sum g_j$ where each $g_j$ is a composition $\ind B\to B_{y_j}\to \ind B'$ for some $y_j\in \pi\inv\{<p\}$.
Since $\res$ is an additive functor, $\res \ind f = \sum \res g_j$, and $\res g_j$ factors through $\res B_{y_j}\in \SSBim_{<p}(J,I)$, proving the result. Several steps above are justified with  Lemma~\ref{BinOrdinarySB}.
\end{proof}

An important feature of $\nabla$-exact complexes is that, for any Soergel bimodule $B$, the functor $\Hom(B,-)$ is exact on them \cite[Proposition 3.5]{LibWil}. The analogous statement holds in the singular setting. We state it here only in a special case for simplicity.

\begin{lem}\label{lem.nablaexact}
    Let $B,B'\in \SSBim(J,I)$ and let $C\subset W_I\backslash W/W_J$ be downward-closed. Then 
    the short exact sequence 
    \begin{equation}\label{sesB}
        0 \rightarrow \Gamma_C B'\rightarrow B'\rightarrow B'/\Gamma_CB'\rightarrow 0
    \end{equation}
    induces an exact sequence
    \[ 0 \rightarrow \Hom(B,\Gamma_C B')\rightarrow \Hom(B,B')\rightarrow \Hom(B,B'/\Gamma_CB')\rightarrow 0.\]
\end{lem}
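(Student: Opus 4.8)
The statement is exactly the singular analogue of the $\nabla$-exactness criterion from \cite[Proposition 3.5]{LibWil}, and the natural strategy is to transfer it from the ordinary setting via the adjoint pair $(\ind,\res)$, just as in the proofs of Proposition~\ref{N=Gamma} and Theorem~\ref{lem.intermsBS}. Left-exactness of the sequence is automatic: applying $\Hom(B,-)$ to the short exact sequence \eqref{sesB} is always left exact, so the only content is surjectivity of $\Hom(B,B')\to \Hom(B,B'/\Gamma_C B')$. Equivalently, I need to show that every morphism $B\to B'/\Gamma_C B'$ lifts along the quotient map $B'\to B'/\Gamma_C B'$.

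\textbf{Key steps.} First I would reduce to the ordinary case. By Lemma~\ref{BinOrdinarySB}(1), the identity functor on $\Bim(J,I)$ is a direct summand of $\res\circ\ind$ via maps $\incl\co M\to \res\ind M$ and $\proj\co \res\ind M\to M$ with $\proj\circ\incl=\id$. By Lemma~\ref{ResGamma}, $\res\ind$ takes \eqref{sesB} to the sequence obtained by applying $\res$ to
\[ 0\to \Gamma_{\pi\inv C}(\ind B')\to \ind B'\to (\ind B')/\Gamma_{\pi\inv C}(\ind B')\to 0, \]
and $\pi\inv C$ is downward-closed by Lemma~\ref{lem.posetprojection}. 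Since $\ind B'\in\SBim=\SSBim(\emptyset,\emptyset)$, the ordinary result \cite[Proposition 3.5]{LibWil} — applied to a $\nabla$-exact resolution of $\Gamma_{\pi\inv C}(\ind B')$ in $\SBim_{\pi\inv C}$ supplied by Lemma~\ref{lem.tiltresol}, exactly as in the proof of Lemma~\ref{lem.LW} — tells us that $\Hom(\ind B,\ind B')\to\Hom(\ind B,(\ind B')/\Gamma_{\pi\inv C}\ind B')$ is surjective; here I use that $\ind B$ is also an ordinary Soergel bimodule. Applying the exact functor $\res$ preserves this surjectivity onto the image of $\res$, and then conjugating by $\incl,\proj$ (which is legitimate because $\incl,\proj$ are natural and $\res\ind$ carries \eqref{sesB} to a short exact sequence) transports the lifting back to the $(I,J)$-setting: given $g\co B\to B'/\Gamma_C B'$, lift $\ind g$ (or rather $\res\ind g$) and set $f=\proj\circ(\text{lift})\circ\incl$, then check $\pi_{B'}\circ f=g$ using $\proj\circ\incl=\id$ and naturality of $\proj$ with respect to the quotient map.

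\textbf{Main obstacle.} The delicate point is that Lemma~\ref{BinOrdinarySB}(1) only makes the identity a summand \emph{up to natural isomorphism}, so I must be careful that the splitting maps $\incl$, $\proj$ are natural transformations of functors on $\Bim(J,I)$ and hence commute with the three vertical maps $\Gamma_C B'\hookrightarrow B'\twoheadrightarrow B'/\Gamma_C B'$; this is what makes the diagram chase produce a genuine lift rather than merely a lift of $\res\ind g$. A secondary subtlety is ensuring $\res$ really is exact (it is, being tensoring with the free, hence flat, module ${}_{R^I}R_{R^J}$ — or one can note $R^I\subset R$ is a Frobenius extension as in Lemma~\ref{BinOrdinarySB}) so that it preserves both the short exactness of the restricted sequence and surjectivity. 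Once naturality is in hand, the argument is a routine diagram chase essentially identical in structure to the proofs of Theorem~\ref{lem.intermsBS} and Proposition~\ref{N=Gamma}.
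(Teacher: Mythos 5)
Your route is genuinely different from the paper's. The paper's proof is a short dimension count: all terms of \eqref{sesB} are $\nabla$-filtered with $\ch{B'}=\ch{\Gamma_CB'}+\ch{B'/\Gamma_CB'}$, so the Soergel--Williamson hom formula \cite[Theorem 7.9]{SingSb} gives $\gdim\Hom(B,B')=\gdim\Hom(B,\Gamma_CB')+\gdim\Hom(B,B'/\Gamma_CB')$, and surjectivity of the left-exact sequence follows for degree reasons. A transfer argument through $\ind$ and $\res$, in the spirit of Proposition~\ref{N=Gamma} and Theorem~\ref{lem.intermsBS}, could be made to work, but as written yours has a gap at its central step.

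The gap is the identification of $\res\ind$ applied to \eqref{sesB} with $\res$ of the sequence $0\to\Gamma_{\pi\inv C}(\ind B')\to\ind B'\to(\ind B')/\Gamma_{\pi\inv C}(\ind B')\to0$, which you justify by Lemma~\ref{ResGamma}. That lemma states $\Gamma_C\circ\res=\res\circ\Gamma_{\pi\inv C}$, i.e.\ compatibility of the support filtration with \emph{restriction}; it lets you rewrite $\res(\Gamma_{\pi\inv C}\ind B')$ as $\Gamma_C(\res\ind B')$, but what your identification needs is compatibility with \emph{induction}, namely $\ind(\Gamma_CB')=\Gamma_{\pi\inv C}(\ind B')$ inside $\ind B'$ (equivalently $\res\ind(\Gamma_CB')=\Gamma_C(\res\ind B')$). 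Only the inclusion $\ind(\Gamma_CB')\subset\Gamma_{\pi\inv C}(\ind B')$ comes cheaply, via Proposition~\ref{N=Gamma} and Lemma~\ref{BinOrdinarySB}(2); the reverse inclusion requires knowing that $\ind$ carries $\nabla$-filtered singular bimodules to bimodules filtered by the expected standard bimodules (e.g.\ that $\ind R_q$ is filtered by $R_x$ for $x\in q$), a fact from Williamson's theory that is neither among the results quoted in this paper nor proved by you. Without that equality, your lift $h$ only satisfies $\ind(\pi_{B'})\circ h=\ind(g)$ modulo $\Gamma_{\pi\inv C}(\ind B')/\ind(\Gamma_CB')$, so the final chase returns $g$ only up to an error term; killing it needs further $\nabla$-filtration input (for instance $\Gamma_C(B'/\Gamma_CB')=0$ and functoriality of $\Gamma_C$ under bimodule maps), none of which appears in your argument, while your ``main obstacle'' discussion of the naturality of $\incl$ and $\proj$ does not touch this point. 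A secondary inaccuracy: a $\nabla$-exact resolution of $\Gamma_{\pi\inv C}(\ind B')$ used ``as in Lemma~\ref{lem.LW}'' shows that morphisms \emph{into} $\Gamma_{\pi\inv C}(\ind B')$ factor through $\SBim_{\pi\inv C}$; it does not lift morphisms out of the quotient. The ordinary-setting surjectivity you need is obtained by applying \cite[Proposition 3.5]{LibWil} directly to the $\nabla$-exact short exact sequence $0\to\Gamma_{\pi\inv C}(\ind B')\to\ind B'\to(\ind B')/\Gamma_{\pi\inv C}(\ind B')\to0$.
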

\begin{proof}
The functor $\Hom(B,-)$ is left-exact. By applying it on \eqref{sesB} we get an exact sequence
\[ 0 \rightarrow \Hom(B,\Gamma_C B')\rightarrow \Hom(B,B')\xrightarrow{r} \Hom(B,B'/\Gamma_CB').\]
It remains to show that $r$ is surjective.

Notice that all the terms in \eqref{sesB} have a $\nabla$-filtration and we have $\ch{B'}=\ch{\Gamma_CB'}+\ch{B'/\Gamma_CB'}$. So we can apply Soergel--Williamson's hom formula \cite[Theorem 7.9]{SingSb} to deduce that 
\[\gdim( \Hom(B,B'))=\gdim (\Hom(B,\Gamma_C B')+\gdim\Hom(B,B'/\Gamma_CB'),\] where $\gdim$ denotes the graded dimension of a vector space.
Then, by dimension reasons, $r$ must be surjective and the claim follows.    
\end{proof}


\begin{lem}\label{lem:ses}
Let $B\in \SSBim(J,I)$ and let $p$ a $(I,J)$-coset.  Then
    \[ \Hom(B,B_p)/\Hom_{ <p}(B,B_p)\cong\Hom(B,R_p(\ell(\ma{p})-\ell(J))).\]
\end{lem}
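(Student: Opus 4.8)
The plan is to realize the quotient $B_p/\Gamma_{<p}B_p$ as the single standard bimodule $\nabla_p=R_p(\ell(\ma p)-\ell(J))$ and then feed the short exact sequence $0\to\Gamma_{<p}B_p\to B_p\to B_p/\Gamma_{<p}B_p\to 0$ through $\Hom(B,-)$, using the two main results already established: \Cref{lem.intermsBS} to rewrite $\Hom_{<p}(B,B_p)$ as $\Hom(B,\Gamma_{<p}B_p)$, and \Cref{lem.nablaexact} to keep the sequence short exact after applying $\Hom(B,-)$.

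First I would observe that $B_p\in\SSBim(J,I)$, so by \Cref{supportfiltr} the support filtration $\{\Gamma_C\}$ is a functorial $\nabla$-filtration on $B_p$. Choosing a Bruhat-compatible enumeration of $W_I\backslash W/W_J$ in which $p$ is the last coset that is $\le p$, the associated graded piece of the filtration at $p$ is $\Gamma_{\le p}B_p/\Gamma_{<p}B_p$, a direct sum of shifts of $R_p$ whose multiplicity polynomial is $g_p(B_p)$. By \eqref{eq:chbq} the coefficient of ${}^I\hspace{-0.05cm}H_p^J$ in $\ch{B_p}$ is $1$, so $\overline{g_p(B_p)}=1$; since the support filtration is an honest $\nabla$-filtration, $g_p(B_p)\in\mathbb N[v^{\pm1}]$, and $\overline{g_p(B_p)}=1$ forces $g_p(B_p)=1$. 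Again by \eqref{eq:chbq}, no $R_r$ with $r\not\le p$ occurs in the filtration, so $\Gamma_{\le p}B_p=B_p$. Hence $B_p/\Gamma_{<p}B_p\cong\nabla_p$.

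Now \Cref{lem.nablaexact}, applied with $C=\{<p\}$ and $B'=B_p$, gives the exact sequence
\[0\to\Hom(B,\Gamma_{<p}B_p)\to\Hom(B,B_p)\to\Hom(B,B_p/\Gamma_{<p}B_p)\to 0,\]
so $\Hom(B,B_p)/\Hom(B,\Gamma_{<p}B_p)\cong\Hom(B,\nabla_p)$. Finally \Cref{lem.intermsBS} identifies $\Hom_{<p}(B,B_p)$ with $\Hom(B,\Gamma_{<p}B_p)$ as submodules of $\Hom(B,B_p)$; substituting gives the asserted isomorphism.

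The one step that needs genuine care is the identification $B_p/\Gamma_{<p}B_p\cong\nabla_p$ with multiplicity exactly one: this combines the fact that the support filtration is a bona fide $\nabla$-filtration (so that the multiplicities lie in $\mathbb N[v^{\pm1}]$ rather than merely $\mathbb Z[v^{\pm1}]$) with the character bound \eqref{eq:chbq}, which pins the leading coefficient to $1$. The remaining steps are formal once \Cref{lem.intermsBS} and \Cref{lem.nablaexact} are available.
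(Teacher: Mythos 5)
Your proposal is correct and follows essentially the same route as the paper: identify $B_p/\Gamma_{<p}B_p\cong\nabla_p$ via the character bound \eqref{eq:chbq}, apply \Cref{lem.nablaexact} to the resulting short exact sequence, and use \Cref{lem.intermsBS} to rewrite $\Hom_{<p}(B,B_p)$ as $\Hom(B,\Gamma_{<p}B_p)$. The only difference is that you spell out the multiplicity-one argument for the top subquotient, which the paper leaves as a citation of \eqref{eq:chbq}.
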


\begin{proof}
Recall by \Cref{lem.intermsBS} that 
$\Hom_{<p}(B,B_p)=\Hom(B,\Gamma_{<p}B_p)$ and that $B_p/\Gamma_{<p} B_p\cong \nabla_p = R_p(\ell(\ma{p})-\ell(J))$, see \eqref{eq:chbq}. Then by \Cref{lem.nablaexact} we obtain
the exact sequence
\[0 \rightarrow \Hom_{<p}(B,B_p)\rightarrow \Hom(B,B_p) \rightarrow \Hom(B,R_p(\ell(\ma{p})-\ell(J)))\rightarrow 0\]
and the statement follows.
\end{proof}

The reader may wish to review material on the one-tensor $\onetensor$, introduced in \S\ref{SSBimBackground}.

\begin{prop}\label{prop::kill1tensor}
Let $I_\bullet$ and $I'_\bullet$ be reduced expressions for a $(I,J)$-coset $p$. Then 
  \[\Hom^0_{<p}(\BS(I_\bullet),\BS(I'_\bullet))=\{ f\in  \Hom^0(\BS(I_\bullet),\BS(I'_\bullet))\mid f(\onetensor_{I_\bullet})=0\}.\]
\end{prop}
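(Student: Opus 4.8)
The plan is to identify both sides with subspaces of $\Hom^0(\BS(I_\bullet), \BS(I'_\bullet))$ and show they coincide using the results already assembled, especially \Cref{lem.intermsBS} (which reformulates the factorization ideal as a support condition) and the black-box properties of $B_p$ from \Cref{ass}. The key structural fact is that $\BS(I'_\bullet)$ has $B_p$ as a multiplicity-one direct summand, and $B_p$ contains $\onetensor_{I'_\bullet}$; write $\BS(I'_\bullet) = B_p \oplus B'$ where $B'$ is a sum of shifts of $B_q$ for $q < p$ (using the last claim of \Cref{ass}, since any reduced expression has width exactly $\ell(p)$ and its non-$B_p$ summands must have smaller length, hence index cosets $q < p$, as $q \le p$ and $q \ne p$). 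Similarly $\BS(I_\bullet) = B_p \oplus B''$ with $B''$ a sum of shifts of $B_q$, $q<p$.

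First I would use \Cref{lem.intermsBS} to rewrite $\Hom^0_{<p}(\BS(I_\bullet),\BS(I'_\bullet)) = \Hom^0(\BS(I_\bullet), \Gamma_{<p}\BS(I'_\bullet))$. Combined with the decomposition $\BS(I'_\bullet) = B_p \oplus B'$ and the fact that $\Gamma_{<p}$ is a subfunctor of the identity (so $\Gamma_{<p}\BS(I'_\bullet) = \Gamma_{<p}B_p \oplus \Gamma_{<p}B' = \Gamma_{<p}B_p \oplus B'$, the last equality because $B'$ is supported on $\{<p\}$), this says $f \in \Hom^0_{<p}$ iff the component of $f$ into $B_p$ lands inside $\Gamma_{<p}B_p$. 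On the other hand, $\BS(I'_\bullet)/\Gamma_{<p}\BS(I'_\bullet) \cong B_p/\Gamma_{<p}B_p \cong \nabla_p = R_p(\ell(\ma p) - \ell(J))$ by \eqref{eq:chbq}, and the one-tensor $\onetensor_{I'_\bullet} \in B_p$ maps to a nonzero element (in fact a generator of the cyclic-at-top part) of this quotient $\nabla_p$ — this is where I need to know that $\onetensor$ survives the support quotient, which should follow from the degree of $\onetensor$ being minimal and $\nabla_p$ being generated in that degree as a bimodule quotient, or can be extracted from the erratum-corrected normalization in \Cref{ass}.

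The crux is then: $f(\onetensor_{I_\bullet}) = 0$ in $\BS(I'_\bullet)$ iff the image of $f(\onetensor_{I_\bullet})$ in the quotient $\nabla_p$ is zero \emph{and} $f(\onetensor_{I_\bullet}) \in \Gamma_{<p}\BS(I'_\bullet)$. Here I would argue that $\onetensor_{I_\bullet}$ generates $\BS(I_\bullet)$ modulo $\Gamma_{<p}\BS(I_\bullet)$ as a bimodule (its image generates $\nabla_p = \BS(I_\bullet)/\Gamma_{<p}\BS(I_\bullet)$), so the composite map $\BS(I_\bullet) \xrightarrow{f} \BS(I'_\bullet) \to \nabla_p$ is zero iff it kills $\onetensor_{I_\bullet}$ iff $f(\onetensor_{I_\bullet}) \in \Gamma_{<p}\BS(I'_\bullet)$; and then by \Cref{lem.intermsBS} that is exactly the condition $f \in \Hom^0_{<p}$. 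The main obstacle I anticipate is the careful bookkeeping of why $\onetensor$ is a bimodule generator of $\nabla_p$ (equivalently, why the image of $\onetensor_{I_\bullet}$ in $\BS(I_\bullet)/\Gamma_{<p}$ is nonzero and generates), and matching grading shifts so that "the image of $\onetensor$ in the quotient" is literally the cyclic generator of $R_p$ shifted appropriately; once that is pinned down, the equivalence of the two conditions is formal. A clean alternative for the final step is to invoke \Cref{lem:ses} directly: $\Hom(B,B_p)/\Hom_{<p}(B,B_p) \cong \Hom(B,\nabla_p)$ with $B = \BS(I_\bullet)$, reducing everything to the statement that a map $\BS(I_\bullet) \to \nabla_p$ vanishes iff it kills $\onetensor_{I_\bullet}$, i.e. that $\onetensor_{I_\bullet}$ generates $\nabla_p$ as a quotient bimodule — which is the single essential input.
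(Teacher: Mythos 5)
Your overall architecture (reduce via \Cref{lem.intermsBS} to a support condition, then compare with the quotient $\BS(I'_\bullet)\twoheadrightarrow\nabla_p$) matches the paper, but the input you single out as ``the single essential input'' --- that the image of $\onetensor_{I_\bullet}$ \emph{generates} $\nabla_p$ as an $(R^I,R^J)$-bimodule --- is false in general, and this is a genuine gap. The standard bimodule $R_p$ need not be cyclic: in the paper's own example with $W=S_4$, $I=J=\{s_1,s_3\}$ and $\mi{p}=s_2$, one has $R_p=R$ as a vector space while the sub-bimodule $R^I\cdot 1\cdot R^J$ already misses part of the linear terms of $R$. So from $f(\onetensor_{I_\bullet})=0$ you cannot conclude by a generation argument that the composite $\BS(I_\bullet)\xrightarrow{f}\BS(I'_\bullet)\to\nabla_p$ vanishes, which is exactly the direction $f(\onetensor_{I_\bullet})=0\Rightarrow f\in\Hom^0_{<p}$. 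The paper replaces this step by the Soergel--Williamson hom formula: $\Hom^0(\BS(I_\bullet),\nabla_p)\cong\Bbbk$, with the isomorphism given by evaluating at $\onetensor_{I_\bullet}$ and taking the coefficient of $1$; combined with the exactness statement of \Cref{lem.nablaexact}, this yields that the composite vanishes if and only if it kills $\onetensor_{I_\bullet}$. Your proposed alternative via \Cref{lem:ses} runs into the same problem, since you again reduce to the false generation statement instead of to the one-dimensionality of $\Hom^0(\BS(I_\bullet),\nabla_p)$.

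There is a second, smaller gap in the other direction, $f\in\Hom^0_{<p}\Rightarrow f(\onetensor_{I_\bullet})=0$. Your ``crux'' equivalence is mis-stated: its two right-hand conditions (image of $f(\onetensor_{I_\bullet})$ zero in $\nabla_p$, and $f(\onetensor_{I_\bullet})\in\Gamma_{<p}\BS(I'_\bullet)$) are the same condition, and together they only give membership in $\Gamma_{<p}\BS(I'_\bullet)$, not vanishing. What is missing is the degree argument with which the paper opens: since both expressions are reduced, both one-tensors span the one-dimensional bottom degree $\ell(J)-\ell(\ma{p})$, so any degree-zero $f$ satisfies $f(\onetensor_{I_\bullet})=\lambda\,\onetensor_{I'_\bullet}$; and since $\onetensor_{I'_\bullet}$ has nonzero image in $\nabla_p$ it does not lie in $\Gamma_{<p}\BS(I'_\bullet)$, forcing $\lambda=0$. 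You do note that $\onetensor_{I'_\bullet}$ survives the support quotient, but without the scalar-multiple statement that observation does not close this direction either.
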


\begin{proof}
 Since both $I_\bullet\expr p$ and $I'_\bullet$ are reduced, both the elements $\onetensor_{I_\bullet}\in \BS(I_\bullet)$ and $\onetensor_{I'_\bullet}\in \BS(I'_\bullet)$ are in degree $-\ell(\ma{p})+\ell(J)$.
Thus if $f\in  \Hom^0(\BS(I_\bullet),\BS(I'_\bullet))$ we have
$f(\onetensor_{I_\bullet})=\lambda\onetensor_{I'_\bullet}$ for some $\lambda\in\Bbbk$.

Recall that $\nabla_p = R_p(\ell(\ma{p})-\ell(J)$, so $1 \in \nabla_p$ lives in degree $\ell(J) - \ell(\ma{p})$. Proposition~\ref{ass}, together with the definition of the character map, says that we have a short exact sequence of graded $(R^I,R^J)$-bimodules
\begin{equation}\label{eq.gammases}
0\to\Gamma_{<p}\BS(I'_\bullet)\to \BS(I'_\bullet)\xrightarrow{g} \nabla_p\to 0
\end{equation}
where $g$, after rescaling, sends $\onetensor_{I'_\bullet}$ to $1\in \nabla_p$.
Applying $\Hom^0(\BS(I_\bullet),-)$, we obtain by \Cref{lem.nablaexact} the exact sequence
\begin{equation}\label{eq.Homgammases}
0\to\Hom^0_{<p}(\BS(I_\bullet),\BS(I'_\bullet))\to\Hom^0(\BS(I_\bullet), \BS(I'_\bullet))\xrightarrow{g\circ-} \Hom^0(\BS(I_\bullet),\nabla_p)\to 0
\end{equation}
where the first map is the inclusion map. 
By \cite[Theorem 7.9]{SingSb} we have 
\[\Hom^0(\BS(I_\bullet),\nabla_p)\cong \Bbbk,\] where the isomorphism sends $h \in \Hom^0(\BS(I_{\bullet}),\nabla_p)$ to the coefficient of $1$ in $h(\onetensor_{I_\bullet})$. This in turn yields the exact sequence
\begin{equation}\label{eq.Homgammases2}
0\to\Hom^0_{<p}(\BS(I_\bullet),\BS(I'_\bullet))\to\Hom^0(\BS(I_\bullet), \BS(I'_\bullet))\xrightarrow{\phi} \Bbbk\to 0.
\end{equation}
If $f \in \Hom^0(\BS(I_\bullet)$ then $g(f(\onetensor_{I_\bullet}))=g(\lambda \onetensor_{I'_\bullet})=\lambda \cdot 1$, with $\lambda$ as above. Thus $\phi(f) = \lambda$.
It follows that $f\in \Hom^0_{<p}(\BS(I_\bullet),\BS(I'_\bullet))=\operatorname{Ker}\phi$ if and only if $\lambda=0$ if and only if $f(\onetensor_{I_\bullet}) = 0$.
\end{proof}

 In the next proposition, \emph{restriction} refers to restriction from left $R^{\leftred(p)}$-modules to left $R^I$-modules. Any module so restricted still retains an action of $R^{\leftred(p)}$.

\begin{prop}
Let $I_\bullet$ and $I'_\bullet$ be reduced expressions for a $(I,J)$-coset $p$. Assume further that $I'_\bullet$ is of the form
\[I'_\bullet =[[I\supset \leftred(p)]]\circ K_\bullet.\] Then $\BS(I'_{\bullet})$ is restricted from $\BS(K_{\bullet})$ and admits an action of $R^{\leftred(p)}$. 
If $f\in \Hom_{<p}(\BS(I_\bullet),\BS(I'_\bullet))$, then  $\im f\cap (R^{\leftred(p)} \cdot \onetensor_{I'_\bullet})= 0$. 
\end{prop}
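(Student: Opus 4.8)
Since $I'_\bullet=[[I\supset\leftred(p)]]\circ K_\bullet$ is reduced, its contiguous subexpression $K_\bullet$ is a reduced $(\leftred(p),J)$-expression by \cite[Proposition 3.12]{EKo}. Unwinding the definitions in \S\ref{SSBimBackground}, the Bott--Samelson bimodule of the decreasing expression $[[I\supset\leftred(p)]]$ is $R^{\leftred(p)}(\ell(I)-\ell(\leftred(p)))$ as an $(R^I,R^{\leftred(p)})$-bimodule, so
\[\BS(I'_\bullet)=R^{\leftred(p)}(\ell(I)-\ell(\leftred(p)))\otimes_{R^{\leftred(p)}}\BS(K_\bullet)\cong\res\bigl(\BS(K_\bullet)(\ell(I)-\ell(\leftred(p)))\bigr),\]
where $\res$ denotes restriction of the left module structure along $R^I\hookrightarrow R^{\leftred(p)}$; under this identification $\onetensor_{I'_\bullet}$ corresponds to $\onetensor_{K_\bullet}$. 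This establishes the first sentence of the proposition, and in particular equips $\BS(I'_\bullet)$ with a left $R^{\leftred(p)}$-action extending its $R^I$-action, so that $R^{\leftred(p)}\cdot\onetensor_{I'_\bullet}$ makes sense.

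Next I would reduce the intersection statement to the injectivity of a single map. Recall from the proof of \Cref{prop::kill1tensor} — specifically the short exact sequence \eqref{eq.gammases}, which is available because $I'_\bullet\expr p$ is reduced — that there is a short exact sequence of $(R^I,R^J)$-bimodules $0\to\Gamma_{<p}\BS(I'_\bullet)\to\BS(I'_\bullet)\xrightarrow{g}\nabla_p\to0$, where $\nabla_p=R_p(\ell(\ma p)-\ell(J))$ and, after rescaling, $g(\onetensor_{I'_\bullet})=1\in\nabla_p$. By \Cref{lem.intermsBS}, every $f\in\Hom_{<p}(\BS(I_\bullet),\BS(I'_\bullet))$ satisfies $\im f\subseteq\Gamma_{<p}\BS(I'_\bullet)=\ker g$. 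Hence $\im f\cap(R^{\leftred(p)}\cdot\onetensor_{I'_\bullet})\subseteq\ker g\cap(R^{\leftred(p)}\cdot\onetensor_{I'_\bullet})$, and it suffices to prove that $g$ restricts to an injection on $R^{\leftred(p)}\cdot\onetensor_{I'_\bullet}$.

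The left $R^{\leftred(p)}$-action on $\BS(I'_\bullet)$ is by endomorphisms of $\BS(I'_\bullet)$ in $\Bim(J,I)$ (left multiplication by $R^{\leftred(p)}$ commutes with the left $R^I$- and right $R^J$-actions), so it defines a graded $\Bbbk$-algebra homomorphism $\phi\colon R^{\leftred(p)}\to\End_{(R^I,R^J)}(\BS(I'_\bullet))$. Since the support filtration $\{\Gamma_C\}$ is a subfunctor of the identity functor on $\Bim(J,I)$ (\Cref{def.Gamma}), the submodule $\ker g=\Gamma_{<p}\BS(I'_\bullet)$ is stable under each $\phi(r)$; hence the $R^{\leftred(p)}$-action descends to $\nabla_p$, and $g$ becomes equivariant for the descended action $\bar\phi\colon R^{\leftred(p)}\to\End_{(R^I,R^J)}(\nabla_p)$. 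By \Cref{lem:endstd} we may identify $\End_{(R^I,R^J)}(\nabla_p)=R^{\leftred(p)}$ as a ring, so $\bar\phi$ is a graded $\Bbbk$-algebra endomorphism of $R^{\leftred(p)}$ whose restriction to $R^I$ is the inclusion $R^I\hookrightarrow R^{\leftred(p)}$ (the $R^I$-action on $\nabla_p=R_p=R^{\leftred(p)}$ being left multiplication). Now $R^{\leftred(p)}$ is a finitely generated graded $\Bbbk$-algebra and an integral domain, finite over $R^I$, so $\dim R^{\leftred(p)}=\dim R^I$; if $\ker\bar\phi\neq0$ then $\dim(\im\bar\phi)=\dim(R^{\leftred(p)}/\ker\bar\phi)<\dim R^{\leftred(p)}=\dim R^I\leq\dim(\im\bar\phi)$, a contradiction. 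Thus $\bar\phi$ is injective, and for $0\neq r\in R^{\leftred(p)}$ we get $g(r\cdot\onetensor_{I'_\bullet})=r\cdot g(\onetensor_{I'_\bullet})=r\cdot1=\bar\phi(r)\neq0$, so $g$ is injective on $R^{\leftred(p)}\cdot\onetensor_{I'_\bullet}$, which finishes the proof.

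The step I expect to demand the most care is the last one: checking that the left $R^{\leftred(p)}$-action induced on the $\nabla$-quotient $\nabla_p$ is faithful (equivalently, torsion-free), since only then does $\ker g$ avoid $R^{\leftred(p)}\cdot\onetensor_{I'_\bullet}$. I have sketched a dimension-theoretic argument; an alternative is to use the compatibility of the support filtration with restriction (\Cref{ResGamma}, in the form for the Frobenius extension $R^{\leftred(p)}\subseteq R^I$ rather than for $\emptyset\subseteq(I,J)$), which identifies $\Gamma_{<p}\BS(I'_\bullet)$ with a $\nabla$-filtered sub-$(R^{\leftred(p)},R^J)$-bimodule of $\BS(K_\bullet)$ and thereby exhibits $\nabla_p$ with its standard — hence torsion-free — left $R^{\leftred(p)}$-module structure. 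Either route concludes $g(r\cdot\onetensor_{I'_\bullet})\neq0$ for $r\neq0$ because $R^{\leftred(p)}$ is a domain.
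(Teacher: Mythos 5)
Your argument is correct, but it handles the crucial faithfulness step by a genuinely different route than the paper. The paper introduces the $(\leftred(p),J)$-coset $z\expr K_\bullet$, observes $\mi{z}=\mi{p}$ and $\leftred(z)=\leftred(p)$, and restricts the entire sequence $0\to\Gamma_{<z}\BS(K_\bullet)\to\BS(K_\bullet)\xrightarrow{g}R_z\to 0$ along $R^I\subset R^{\leftred(p)}$; this produces \eqref{eq.gammases} together with the stronger statement that the quotient map is $R^{\leftred(p)}$-equivariant onto $R_z=R^{\leftred(p)}$ with its regular action, so $g'(c\cdot\onetensor_{I'_\bullet})=c$ at once, and \Cref{lem.intermsBS} finishes exactly as in your last step. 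You instead stay entirely at the $(R^I,R^J)$-level: you check that the auxiliary $R^{\leftred(p)}$-action preserves $\Gamma_{<p}\BS(I'_\bullet)$, descends to $\nabla_p$, and is encoded via \Cref{lem:endstd} by a graded ring endomorphism $\bar\phi$ of $R^{\leftred(p)}$ fixing $R^I$, whose injectivity you obtain from Krull-dimension (transcendence-degree) considerations — valid here, since $R$ is a polynomial ring, so $R^{\leftred(p)}$ is a finitely generated domain over $\Bbbk$, finite over $R^I$. What the paper's route buys is economy and precision: it identifies the descended action as the standard regular one (in your language, $\bar\phi=\id$), using only the coset facts $\mi{z}=\mi{p}$, $\leftred(z)=\leftred(p)$ and the restriction-compatibility of the support filtration, with no extra commutative algebra. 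What your route buys is that you never need the coset $z$ nor that compatibility: you use only the already-established sequence \eqref{eq.gammases}, \Cref{lem.intermsBS}, and \Cref{lem:endstd}. Two minor remarks: injectivity of $\bar\phi$ can be had more cheaply by localizing, since $R^{\leftred(p)}\otimes_{R^I}\operatorname{Frac}(R^I)$ is a field and the domain $R^{\leftred(p)}$ embeds into it, so a nonzero kernel is impossible; and in your sketched alternative the extension should read $R^I\subset R^{\leftred(p)}$ (not the reverse) — that alternative is essentially the paper's own proof.
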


\begin{proof}
Letting $z\expr K_\bullet$, the $(\leftred(p),J)$-coset $z$ has $\mi{z}=\mi{p}$ and the left redundancy $\leftred(z)=\leftred(p)$.
It follows that $R_z$ also restricts to $R_p$, and thus the exact sequence
\begin{equation}
    0\to \Gamma_{<z}\BS(K_\bullet)\to \BS(K_\bullet)\xrightarrow{g} R_z\to 0
\end{equation}
restricts to 
\begin{equation}
    0\to \Gamma_{<p}\BS(I'_\bullet)\to \BS(I'_\bullet)\xrightarrow{g'} R_p\to 0.
\end{equation}
If $c \in R^{\leftred(p)}$ then $g'(c\cdot \onetensor_{I'_\bullet})=g(c\cdot \onetensor_{K_\bullet})=c \cdot 1$. 

Now for $f\in \Hom_{<p}(\BS(I_\bullet),\BS(I'_\bullet))$, we have $\im f\subset \Gamma_{<p}\BS(I'_\bullet)$ by \Cref{lem.intermsBS}. 
Thus if $c \cdot \onetensor_{I'_\bullet}\in \im f\cap (R^{\leftred(p)} \cdot \onetensor_{I'_\bullet})$, then     $g'(c\cdot \onetensor_{I'_\bullet}) = 0$ so $c = 0$. This proves the claim.
\end{proof}


\printbibliography
\end{document}